\numberwithin{equation}{section}
\newtheorem{theorem}{Theorem}[section]
\newtheorem{lemma}[theorem]{Lemma}
\newtheorem{proposition}[theorem]{Proposition}
\newtheorem{corollary}[theorem]{Corollary}
\newtheorem{claim}[theorem]{Claim}
\newtheoremstyle{remarkstyle}
{}{}{\itshape}{ }{\bfseries}{.}{ }{\thmname{#1}\thmnumber{ #2}\thmnote{ (#3)}}
\theoremstyle{remarkstyle}
\newtheorem{remark}{Remark}[section]
\newcommand{\R}{\mathbb R}
\newcommand{\C}{\mathbb C}
\newcommand{\Ac}{\mathcal A}
\newcommand{\Bc}{\mathcal B}
\newcommand{\Rc}{\mathcal R}
\newcommand{\Sc}{\mathcal S}
\newcommand{\Vc}{\mathcal V}
\newcommand{\Sb}{\mathbb S}
\newcommand{\varep}{\varepsilon}
\DeclareMathOperator*{\loc}{loc}
\DeclareMathOperator*{\opt}{opt}
\DeclareMathOperator*{\dip}{dip}
\DeclareMathOperator*{\supp}{supp}
\DeclareMathOperator*{\sr}{sr}
\DeclareMathOperator*{\ur}{ur}
\DeclareMathOperator*{\rea}{Re}
\DeclareMathOperator*{\ima}{Im}
\DeclareMathOperator*{\deltc}{{\delta_c}}
\DeclareMathOperator*{\csb}{{c}}
\DeclareMathOperator*{\nls}{{DNLS}}
\newcommand{\wihat}[1]{\widehat{#1}}
\title[Threshold Dynamics for  Dipolar Quantum Gases]
{Mass-Energy Threshold Dynamics for  Dipolar Quantum Gases}
\author[V. D. Dinh]{Van Duong Dinh}
\address[V. D. Dinh]{Laboratoire Paul Painlev\'e UMR 8524, Universit\'e de Lille CNRS, 59655 Villeneuve d'Ascq 
and 
Department of Mathematics, HCMC University of Education, 280 An Duong Vuong, Ho Chi Minh, Vietnam}
\email{contact@duongdinh.com}
\author[L. Forcella]{Luigi Forcella}
\address[L. Forcella]{\'Ecole Polytechnique F\'ed\'erale de Lausanne, Institute of Mathematics, Station 8, CH-1015 Lausanne, Switzerland}
\email{luigi.forcella@epfl.ch}
\author[H. Hajaiej]{Hichem Hajaiej}
\address[H. Hajaiej]{Department of Mathematics, California State University, Los Angeles, CA 90032}
\email{hhajaie@calstatela.edu}
\subjclass[2010]{Primary: 35Q55. Secondary: 35B40, 35B44, 82C10}
\keywords{Gross-Pitaevskii equation, Dipolar BEC, Energy scattering, Finite time blow-up, Concentration phenomena}
\begin{document}
	
	\begin{abstract}
		We consider a Gross-Pitaevskii equation which appears as a model in the description of  dipolar Bose-Einstein condensates, without a confining external trapping potential. We describe the asymptotic dynamics of solutions to the corresponding Cauchy problem in the energy space in different configurations with respect to the mass-energy threshold, namely for initial data above and at the mass-energy threshold. We first establish a scattering criterion for the equation that we prove by means of the concentration/compactness and rigidity scheme. This criterion enables us to show the energy scattering for solutions with data above the mass-energy threshold,  for which only blow-up was known. We also prove a blow-up/grow-up criterion for the equation with general data in the energy space. As a byproduct of scattering and blow-up criteria, and the compactness of minimizing sequences for the Gagliardo-Nirenberg's inequality, we study long time dynamics of solutions with data lying exactly at the mass-energy threshold.
	\end{abstract}
	
	\maketitle
	
	\section{Introduction}
	\label{S1}
	\setcounter{equation}{0}
	\subsection{Introduction}
	In this paper, we consider the Cauchy problem associated to the following non-local  non-linear Schr\"odinger equation arising as a model to describe a dipolar Bose-Einstein Condensate (BEC) at low temperatures:
	\begin{equation}
	\label{eq:evolution}
	i \hbar \frac{\partial u}{\partial t} = - \frac{\hbar^2}{2m}\Delta u + W(x) u + U_0|u|^2 u + (V_{\dip}\ast |u|^2) u, 
	\end{equation}
	where the wave function $u(t,x)$ is a complex function $u:\R\times\R^3\to\C$, $t\in\R$ is the time variable, and  $x$ denotes the space variable in the three-dimensional Euclidean space $\R^3$. The parameters $\hbar$ and $m$ involved in Gross-Pitaevskii equation (GPE) \eqref{eq:evolution} are the Planck constant and the mass of a dipolar particle, respectively. The coefficient  $U_0 := 4 \pi \hbar^2 a_s /m$ describes the local interaction between dipoles in the condensate, which is defined in terms of the $s$-wave scattering length $a_s$. The dipolar interaction kernel $V_{\dip}(x)$ is defined, up to some physical constants (in particular the vacuum magnetic permeability and the permanent magnetic dipole moment), in the following way:
	\[
	V_{\dip}(x)=\frac{1-3\cos^2\theta}{|x|^3},
	\]
	where $\theta$ is the angle between the dipole axis and $x.$ By calling the unitary dipole axis $n$, the previous means that
	\[
	\theta=\arccos\left(\frac{x\cdot n}{|x|}\right).
	\]
	The potential $W(x)$  is a time-independent real-valued function representing an external harmonic confinement that we assume to be zero in the current work, which is equivalent to the physical situation where the external potential is turned off.
	
	
	We refer the reader to \cite{LMSLP, PS, YY1, YY2} for an in-depth justification of the previous GPE as a model for dipolar quantum gases. Let us also mention the papers \cite{AEM, BSTH, DMA} for an overview about the first experimental observations of  Bose-Einstein condensation after which there has been an always increasing interest in the theoretical investigation of such models, as well as numerical studies (we mention \cite{BaCa, BaCaWa, HMS} and reference therein  for numerical simulations).
	
	For a rigorous mathematical study of \eqref{eq:evolution}, it is opportune to consider the latter in its dimensionless formulation, see \cite{BJM}. We will essentially focus on its associated Cauchy problem:
	
	\begin{align} \label{dip-NLS}
	\left\{
	\renewcommand*{\arraystretch}{1.2}
	\begin{array}{rcl}
	i\partial_t u + \frac{1}{2}\Delta u &=& \lambda_1 |u|^2 u + \lambda_2 (K\ast |u|^2) u, \quad (t,x)\in [0,\infty)\times \R^3, \\
	 u(0,x)  &=& u_0(x) \in H^1(\R^3),
	\end{array}
	\right.	
	\end{align}
	where $\lambda_1, \lambda_2 \in \R$ and 
	\[
	K(x)= \frac{x_1^2 + x_2^2 - 2x_3^2}{|x|^5}.
	\]
	
	\noindent	It is worth mentioning that the expression defining the dipolar interaction kernel $K(x)$ is exactly  $V_{\dip}(x)$ given above with the choice of the dipole axis to be $n=(0,0,1).$  The coefficients  $\lambda_1$ and $\lambda_2$ are instead two real parameters defined in terms of the physical constants appearing in \eqref{eq:evolution}, which measure the strength of the two nonlinear terms, i.e. the local and the non-local one.
	
We can partition the $(\lambda_1, \lambda_2)$-coordinate plane in the following two subsets called  \emph{unstable regime} and \emph{stable regime}, respectively: 
	\begin{align}  \label{uns-reg}
	\Rc_{\ur} := \left\{ (\lambda_1, \lambda_2) \in \R^2  \left| 
	\renewcommand*{\arraystretch}{1.3}
	\begin{array}{lcl}
	\lambda_1 <\frac{4\pi}{3} \lambda_2 &\text{if}& \lambda_2>0 \\
	\lambda_1 <-\frac{8\pi}{3} \lambda_2 & \text{if} & \lambda_2<0
	\end{array}
	\right.
	\right\},
	\end{align}
	
	\begin{align} \label{sta-reg}
	\Rc_{\sr} := \left\{ (\lambda_1, \lambda_2) \in \R^2 \left| 
	\renewcommand*{\arraystretch}{1.3}
	\begin{array}{lcl}
	\lambda_1 \geq \frac{4\pi}{3} \lambda_2 &\text{if}& \lambda_2>0 \\
	\lambda_1 \geq -\frac{8\pi}{3} \lambda_2 & \text{if} & \lambda_2<0
	\end{array}
	\right.
	\right\}.
	\end{align}
This terminology has been introduced in the pioneering paper of Carles, Markowich, and Sparber \cite{CMS}, where the first mathematical treatment for the Gross-Pitaevskii equation governing a dipolar BEC has been undertaken.

	
\begin{remark}
Condition \eqref{sta-reg} guarantees that the energy  of a solution to \eqref{dip-NLS} is non-negative, and as showed in \cite{BF}, in the stable regime all solutions are global and scatter. With respect to the latter fact, we can naively think that the stable regime plays the same role of the defocusing character for the cubic NLS equation. Nevertheless, it is only  condition \eqref{cond-GW} below (which is a subset of the unstable regime)  which guarantees instead that the potential energy of \eqref{dip-NLS} is non-positive.  Moreover, as proved in \cite{CMS}, provided that $0<\lambda_1<\frac{4\pi}{3}\lambda_2,$ namely for a defocusing character of the  local term and a strictly positive coupling coefficient, collapse in finite time may occur provided that the energy is negative.
So it is improper to refer to \eqref{uns-reg} and \eqref{sta-reg} as giving the defocusing/focusing correspondence of \eqref{dip-NLS} with respect to the usual cubic NLS.  \\
Concerning the asymptotic dynamics, let us point-out that the main feature of \eqref{dip-NLS} subject to the conditions \eqref{uns-reg} is the existence of standing waves. See below, in the Introduction, for further discussions.
\end{remark}

	Solutions to \eqref{dip-NLS} enjoy the conservation along the flow of mass, defined as
	\begin{equation*}
	M(u(t)) = \|u(t)\|^2_{L^2(\R^3)} = M(u_0), 
	\end{equation*}
	and energy, defined by 
	\begin{equation*}
	E(u(t)) = \frac{1}{2} \left(H(u(t)) + N(u(t))\right) = E(u_0),
	\end{equation*}
	where
	\begin{equation}\label{defi-H}
	H(f):= \|\nabla f\|^2_{L^2(\R^3)}
	\end{equation}
	and
	\begin{equation}\label{defi-N}
	N(f):= \int_{\R^3} \lambda_1 |f(x)|^4 + \lambda_2 (K\ast |f(x)|^2)|f(x)|^2 dx 
	\end{equation}
	are, up to some constant, the kinetic and the potential energy, respectively, associated to \eqref{dip-NLS}. 
	The previous two conservation laws can be formally proved by using  integration by parts; the rigorous derivation can be justified by an approximation argument. 
	
	It was shown in \cite[Lemma 2.1]{CMS} that the dipolar kernel $K(x)$ defines, through a convolution, a continuous linear map from  $L^p(\R^3)$  into itself, for any fixed $p\in(1,\infty),$ i.e. $	f\mapsto K\ast f$ satisfies $\|K\ast f\|_{L^p}\leq C\|f\|_{L^p}$. Moreover, it was proved in  \cite[Lemma 2.3]{CMS}, by using the decomposition of $e^{-ix\cdot\xi}$ into spherical harmonics, that   
	
	\begin{align} \label{fourier-trans-K}
	\wihat{K}(\xi) = \frac{4\pi}{3} \left(\frac{2\xi_3^2-\xi_1^2 -\xi_2^2}{|\xi|^2} \right) \in \left[-\frac{4\pi}{3}, \frac{8\pi}{3}\right],
	\end{align}
	where $\mathcal{F}(f)(\xi)=\wihat f(\xi)=\mathlarger{\int}_{\R^3}e^{-ix\cdot\xi}f(x) dx$ is the Fourier transform of $f(x).$
	Thanks to the Plancherel's identity, we can therefore express the potential energy $N(f)$ as
	\[
	N(f) = (2\pi)^{-3} \int_{\R^3} \left( \lambda_1+\lambda_2 \wihat{K}(\xi)\right) \left|\widehat{|f|^2}(\xi)\right|^2 d\xi.
	\]
	The equation \eqref{dip-NLS} is invariant under the scaling
	\begin{align} \label{scaling}
	u_\mu(t,x):= \mu u(\mu^2 t, \mu x), \quad \mu >0.
	\end{align}
	A straightforward calculation shows that 
	\[
	\|u_\mu(0)\|_{\dot{H}^\gamma(\R^3)} = \mu^{\gamma -\frac{1}{2}} \|u_0\|_{\dot{H}^\gamma(\R^3)}
	\]
	which in turn implies that  under the  scaling \eqref{scaling} the  $\dot{H}^{\frac{1}{2}}(\R^3)$-norm of an initial datum is preserved.

	As mentioned above, in the unstable regime the equation \eqref{dip-NLS} admits standing waves, i.e. solutions of the form $u(t,x) = e^{it} \phi(x)$, where $\phi \in H^1(\R^3)$ is a non-trivial solution to the elliptic equation
	\begin{align} \label{ell-equ}
	-\frac{1}{2} \Delta \phi + \phi + \lambda_1 |\phi|^2 \phi +\lambda_2 (K\ast |\phi|^2) \phi=0.
	\end{align}
	The first result about standing waves for \eqref{dip-NLS} is due to Antonelli and Sparber in  \cite{AS}, where the authors proved their existence by showing the existence of minimizers for the Weinstein functional (see \cite{Weinstein})
	\begin{align} \label{weins-func}
	W(f):= \frac{(H(f))^{\frac{3}{2}} (M(f))^{\frac{1}{2}} }{-N(f)}
	\end{align}
	over the set
	\[
	\Bc:= \left\{ f\in H^1(\R^3) \ : \ N(f) <0\right\}.
	\]
	They also established qualitative properties such as symmetry, regularity, and decay for real positive $H^1(\R^3)$-solutions to \eqref{ell-equ}. 
	
	A second approach to show existence of standing states, which consists on minimizing the energy functional under prescribed $L^2(\R^3)$-norm, is due  to Bellazzini and Jeanjean in \cite{BJ}, and relies on topological methods (see also \cite{CH}). See also the recent paper \cite{Dinh-2021} for the existence of standing waves via the minimization of the action functional.

	In this paper, we are interested in the long time dynamics including global existence, energy scattering, finite time blow-up or grow-up in infinite time, and  concentration phenomena  of solutions to \eqref{dip-NLS} in the unstable regime. It is known that  long time dynamics for solutions to \eqref{dip-NLS} is closely related to the existence of ground states. Here by ground state we mean a non-trivial solution to \eqref{ell-equ} which minimizes the Weinstein functional \eqref{weins-func} over all functions $f \in \Bc.$
	
As the aim of our paper is to give results about the asymptotic of solutions to \eqref{dip-NLS} in the unstable regime,  we recall some known results about the behaviour of solutions under the conditions \eqref{uns-reg}, and then we proceed by stating our main contributions. 
	
	\begin{remark}
		 In the stable regime, it has been proved in \cite{BF} that for any $H^1(\R^3)$ initial datum the corresponding solution to \eqref{dip-NLS} is global and scatters.  Hence we will only focus on the unstable regime given by  \eqref{uns-reg}.
	\end{remark}

	As mentioned before, the local well-posedness of \eqref{dip-NLS} was proved in \cite{CMS}, and among other results, the existence of finite time blow-up for \eqref{dip-NLS} was shown for initial data with negative energy.  
	
	 For the reader's convenience, let us start by reviewing some important results in the unstable regime.
	
	\begin{proposition}[\cite{CMS}] \label{theo-blow-CMS}
		Let $\lambda_1$ and $\lambda_2$ satisfy \eqref{uns-reg}. Let $u_0 \in \Sigma:= H^1(\R^3) \cap L^2(\R^3;|x|^2 dx)$ satisfy $E(u_0)<0$. Then the corresponding solution \eqref{dip-NLS} blows-up in finite time.
	\end{proposition}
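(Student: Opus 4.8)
The plan is to run the classical Glassey-type convexity argument based on the virial identity for the variance. Since the initial datum lies in $\Sigma$, the quantity
\[
\Vc(t) := \int_{\R^3} |x|^2 |u(t,x)|^2 \, dx
\]
is finite, and as a first step one shows that it is twice continuously differentiable in time along the flow. This propagation of the weighted $L^2$ bound is standard for $\Sigma$-solutions, but it already requires care here because of the nonlocal term: it is justified by a regularization/truncation argument (truncating the weight $|x|^2$ by smooth compactly supported cut-offs, or mollifying the data) and passing to the limit, crucially using that $f \mapsto K \ast f$ is bounded on every $L^p(\R^3)$, $1<p<\infty$, as recalled from \cite[Lemma 2.1]{CMS}.

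Second, I would compute the two time derivatives of $\Vc$. The first derivative is the dilation momentum $\Vc'(t) = 2\, \ima \int_{\R^3} \bar u \, (x \cdot \nabla u)\, dx$, obtained from the continuity equation; the potential terms drop out at this order because the nonlinearity is gauge invariant. Differentiating once more and inserting the equation yields the virial identity
\[
\Vc''(t) = 2 H(u(t)) + 3 N(u(t)).
\]
The kinetic contribution $2H$ comes from the free part exactly as for the linear Schr\"odinger equation, while the factor $3$ in front of $N$ reflects that both the local quartic term and the nonlocal term scale identically under the $L^2$-preserving dilation $u \mapsto \lambda^{3/2} u(\lambda \cdot)$, since $K$ is homogeneous of degree $-3$. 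Establishing this identity rigorously for the nonlocal nonlinearity is the main technical point: one computes the commutator of $x\cdot\nabla$ with the convolution operator, and the degree $-3$ homogeneity of $K$ is precisely what makes this contribution close up into a clean multiple of $N$.

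Third, I would convert the identity into a differential inequality using the conservation of energy. Writing $H = 2E - N$ in $\Vc'' = 2H + 3N$ gives
\[
\Vc''(t) = 6 E(u_0) - H(u(t)) \le 6 E(u_0) < 0,
\]
where I used $E(u(t)) = E(u_0)$, the hypothesis $E(u_0)<0$, and $H \ge 0$. Thus $\Vc$ is nonnegative with second derivative bounded above by the strictly negative constant $6E(u_0)$. Integrating twice yields $\Vc(t) \le \Vc(0) + \Vc'(0)\, t + 3 E(u_0)\, t^2$, and the right-hand side, being a downward parabola, becomes negative in finite time. This contradicts $\Vc(t)\ge 0$ for as long as the solution exists, so the maximal forward existence time $T^*$ must be finite; the blow-up alternative of the local theory then forces $\|\nabla u(t)\|_{L^2(\R^3)} \to \infty$ as $t \to T^*$.

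The hard part is the rigorous justification of the virial identity in the nonlocal setting, namely both the differentiability of $\Vc$ along the flow and the exact coefficient of $N$, which is where the specific structure of the dipolar kernel $K$ enters. Once the identity $\Vc'' = 2H + 3N$ is in hand, the convexity argument is immediate.
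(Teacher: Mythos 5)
Your proposal is correct and is exactly the argument the paper invokes: the Glassey convexity method via the virial identity $V''(t)=2H(u(t))+3N(u(t))=6E(u_0)-H(u(t))\le 6E(u_0)<0$ (the same identities the paper records in \eqref{deri-V} and \eqref{deri-seco-V}, following \cite[Theorem 5.2]{CMS}), followed by integrating twice and contradicting $V(t)\ge 0$. Your remarks on the technical points --- propagating the $\Sigma$-regularity by truncation and justifying the coefficient $3$ of $N$ via the degree $-3$ homogeneity of $K$ and its $L^p$-boundedness --- match how this is handled in the cited reference, so there is nothing to add.
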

	The proof of this result is based on an argument of Glassey \cite{Glassey} using virial identities related to \eqref{dip-NLS}, see \cite[Theorem 5.2]{CMS}. Let us mention the fact that in the finite-variance space $\Sigma,$ the result in \cite[Theorem 5.2]{CMS} is true  in the presence of a harmonic confinement as well, for energies bounded by a (positive) constant  depending on the smallest trap-frequency and the $L^2(\R^3;|x|^2 dx)$-norm of the initial datum.\\
	
	In \cite{BF}, Bellazzini and the second author established the scattering for $H^1(\R^3)$-solutions to \eqref{dip-NLS} below the ground state threshold in the unstable regime. 	
	
	\begin{theorem}[\cite{BF}]\label{theo-scat-BF}
		Let $\lambda_1$ and $\lambda_2$ satisfy \eqref{uns-reg}. Let $u_0 \in H^1(\R^3)$ satisfy $E(u_0) M(u_0) < E(\phi) M(\phi)$ and $H(u_0) M(u_0) <H(\phi) M(\phi)$, where $\phi$ is a ground state related to \eqref{ell-equ}. Then the corresponding solution $u(t)$ to \eqref{dip-NLS} exists globally in time and scatters in $H^1(\R^3)$ in both directions, that is, there exist $u^\pm \in H^1(\R^3)$ such that
		\begin{align} \label{defi-scat}
		\lim_{t\rightarrow \pm \infty} \|u(t)- L(t) u^\pm\|_{H^1(\R^3)} =0,
		\end{align}
		where $L(t):= e^{it\frac{1}{2}\Delta}$ is the Schr\"odinger free propagator.
	\end{theorem}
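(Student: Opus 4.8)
The plan is to follow the now-standard road map for energy scattering in the intercritical (here $\dot{H}^{1/2}$-critical) regime, adapting the variational plus virial/Morawetz scheme of Dodson and Murphy to the presence of the nonlocal, sign-indefinite interaction $K\ast|u|^2$. The first step is to set up the variational framework. The minimization of the Weinstein functional $W$ over $\Bc$ yields the sharp Gagliardo--Nirenberg-type inequality
\[
-N(f) \leq \frac{1}{W(\phi)}\bigl(H(f)\bigr)^{3/2}\bigl(M(f)\bigr)^{1/2}, \qquad f\in H^1(\R^3),
\]
with equality exactly on the ground state $\phi$. Inserting this into $E=\tfrac12(H+N)$ and multiplying by $M$ produces, for $x:=H(u)M(u)$, the scalar bound $E(u)M(u)\geq \tfrac12 x - \tfrac{1}{2W(\phi)}x^{3/2}$, whose right-hand side is maximized precisely at $x_0=H(\phi)M(\phi)$ with value $E(\phi)M(\phi)$. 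A continuity/trapping argument then shows that the two hypotheses are propagated by the flow: $H(u(t))M(u(t))$ stays strictly on the left branch $x<x_0$ for all times of existence. This uniform bound on $H(u(t))$ both rules out blow-up --- giving global existence --- and, via the Pohozaev relations for $\phi$ (which give $P(\phi):=H(\phi)+\tfrac32 N(\phi)=0$), yields the coercivity estimate
\[
P(u(t)) = H(u(t)) + \tfrac32 N(u(t)) \geq \delta\, H(u(t))
\]
uniformly in $t$, for some $\delta>0$ depending on the gap to the threshold.

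Next I would develop the perturbative theory. Using the Strichartz estimates for $L(t)=e^{it\frac12\Delta}$ together with the fact (from \cite{CMS}) that $f\mapsto K\ast f$ is bounded on every $L^p$, $1<p<\infty$, the nonlocal term can be controlled in the same Strichartz norms as the local cubic term; this gives local well-posedness, a small-data scattering statement, and a long-time perturbation (stability) lemma. The upshot is the standard reduction: the solution scatters in $H^1$ in both time directions if and only if a global space-time norm $\|u\|_{L^q_tL^r_x(\R\times\R^3)}$, for an admissible scattering pair $(q,r)$, is finite.

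The heart of the argument is then to upgrade the uniform coercivity to a genuine decay statement. I would run a Dodson--Murphy type virial/Morawetz estimate: the unlocalized virial identity reads $\frac{d^2}{dt^2}\int|x|^2|u|^2\,dx = 4P(u)$, and truncating the weight $|x|^2$ to a smooth function that is quadratic for $|x|\leq R$ and linear for $|x|\gg R$ produces a Morawetz inequality in which $\int_I P(u(t))\,dt$ is controlled by $R\sup_t\|u(t)\|_{\dot{H}^{1/2}}^2$ plus error terms supported at $|x|\gtrsim R$. Combined with the coercivity $P(u)\gtrsim H(u)$ and a concentration/scattering criterion (if $\inf_t\|u(t)\|_{L^2(|x|\leq R)}$ is small then the solution scatters), this forces a finite global Strichartz norm, and the perturbative theory then converts it into the asymptotic completeness statement \eqref{defi-scat}.

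I expect the main obstacle to be the nonlocal interaction. Because $K\ast|u|^2$ is not local, the spatial truncation used in the Morawetz estimate does not act cleanly on the potential energy, so one must control commutator-type error terms arising from applying the truncation to $\int(K\ast|u|^2)|u|^2$, exploiting the explicit homogeneity and the $L^p$-boundedness of the kernel rather than any pointwise positivity. The indefiniteness of $N$ also makes the coercivity of $P$ more delicate than in the cubic NLS case, since one cannot separately sign the local and nonlocal contributions; this is exactly where the sharp Weinstein inequality is essential. An alternative, equivalent route would replace the virial/Morawetz step by the Kenig--Merle concentration-compactness and rigidity scheme, constructing a minimal, almost-periodic critical element modulo spatial translations (rotations being unavailable due to the preferred dipole axis) and excluding it by the same coercive virial identity.
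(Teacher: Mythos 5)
Your proposal is correct in outline but follows a genuinely different route from the paper's. The paper does not reprove this theorem: it is quoted from \cite{BF}, whose proof is the Kenig--Merle concentration/compactness and rigidity scheme (profile decomposition, extraction of a minimal non-scattering solution compact modulo a sub-linearly growing translation path $x(t)$, and exclusion of it by a localized virial argument), as the paper notes immediately after the statement. Your variational setup coincides with the paper's: the sharp constant is $C_{\opt}=1/W(\phi)$, the Pohozaev identities \eqref{poho-iden} give $G(\phi)=0$ for $G$ as in \eqref{def:G}, your scalar trapping function $\tfrac12 x-\tfrac13 x_0^{-1/2}x^{3/2}$ indeed peaks at $x_0=H(\phi)M(\phi)$ with value $E(\phi)M(\phi)$, and your coercivity $P(u(t))\geq \delta H(u(t))$ is exactly Lemma \ref{lem-bound-below}; likewise your reduction to a finite global $L^8_tL^4_x$ norm is Lemma \ref{lem-scat-cond}. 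Where you diverge is the core: you run a Dodson--Murphy virial/Morawetz argument instead of concentration/compactness. Two caveats. First, with the paper's $\frac12\Delta$ convention the virial identity is $V''(t)=2G(u(t))$ (see \eqref{deri-V}), not $4P(u)$ --- harmless, but the constants in your truncated estimate inherit it. Second, and more substantively, radial symmetry is unavailable here (the dipolar kernel averages to zero on spheres, so radial solutions reduce to cubic NLS, as the paper remarks), so you are committed to the \emph{non-radial} Dodson--Murphy machinery: Galilean boosts to normalize the momentum (available, since the nonlinearity depends on $u$ only through $|u|^2$) and control of the motion of the spatial center of the Morawetz weight; your proposal does not spell this out, and it is where the technical burden sits. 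The commutator errors from truncating the nonlocal term that you correctly flag are precisely those computed in \eqref{est-B-term}, borrowed from \cite{BF}, so that ingredient exists in the literature. As for what each approach buys: yours avoids profile decompositions entirely and yields a more quantitative, self-contained proof below the threshold; the concentration/compactness route of \cite{BF} is heavier, but its components (profile decomposition, stability, Pythagorean expansions along the flow) are exactly what this paper recycles to prove Theorem \ref{theo-scat-crite}, where only a uniform bound on $-N(u(t))M(u(t))$ --- not a sub-threshold energy --- is available and a Dodson--Murphy scheme would not obviously apply. Your closing ``alternative, equivalent route'' is, in fact, the paper's actual proof.
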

	The proof of the Theorem above is done by employing a concentration/compactness and rigidity argument in the spirit of Kenig and Merle \cite{KM}. 	In Remark \ref{rem-indep} we point-out how the mass-energy thresholds above are independent of the ground state, which is indeed not unique.\\
		It is worth mention that the statement of the previous theorem was actually given in \cite{BF} by means of the boundedness of the initial energy in terms of the mountain pass energy level of the initial datum and the positivity of the Pohozaev functional. More precisely, for any fixed mass $c>0,$ one defines
		\[
		\Sc(c):= \left\{ f \in H^1(\R^3) \ : \ M(f)=c\right\}.
		\]
It was proved in \cite{BJ} that for $c>0$, the energy functional has a mountain pass geometry on $S(c)$. 
Moreover, the energy level $\gamma(c)$ has the following variational characterization 
		\[
		\gamma(c) = \inf \left\{E(f) \ : \ f \in \Vc(c)\right\}, \quad \Vc(c):= \left\{ f\in H^1(\R^3) \ : \ M(f) =c, \,G(f)=0 \right\},
		\]
where
		\begin{equation}\label{def:G}
G(f):= H(f) +\frac{3}{2} N(f).
		\end{equation}
Note that the functional $G$ is exactly the virial functional associated to \eqref{dip-NLS}, namely $G(u(t))= \frac{d^2}{dt^2} V(t),$ where
	
\begin{align} \label{defi-V}
	V(t):= \int_{\R^3}|x|^2|u(t,x)|^2 dx
	\end{align}
 is the variance at time $t$ of the mass density.\\
 	
The conditions on the initial datum leading to global well-posedness and scattering in Theorem \ref{theo-scat-BF} as in the original paper \cite{BF} are $E(u_0)<\gamma\left(\|u_0\|_{L^2(\R^3)}^2\right)$ and $G(u_0)>0.$

		\noindent Let us denote 
		\begin{align} 
		\Ac^+&:= \left\{ f \in H^1(\R^3) \ |  \ E(f) < \gamma(c), c = M(f), \,G(f)>0\right\}, \label{defi-A+}\\
		\Ac^-&:= \left\{ f \in H^1(\R^3)  \ | \ E(f) < \gamma(c), c = M(f), \, G(f)<0\right\}, \label{defi-A-}
		\end{align}
and
		\begin{align} 
		\tilde{\Ac}^+&:= \left\{ f \in H^1(\R^3) \left| \begin{array} {ccc}
		E(f) M(f) &<& E(\phi) M(\phi) \\
		H(f) M(f) &<& H(\phi) M(\phi)
		\end{array} \right. \right\}, \label{defi-tilde-A+}\\
		\tilde{\Ac}^-&:= \left\{ f \in H^1(\R^3) \left| \begin{array} {ccc}
		E(f) M(f) &<& E(\phi) M(\phi) \\
		H(f) M(f) &>& H(\phi) M(\phi)
		\end{array} \right. \right\}. \label{defi-tilde-A-}
		\end{align}

It was shown in \cite[Proposition 3.2]{BF-blow} that $\Ac^+ \equiv \tilde{\Ac}^+$ and $\Ac^- \equiv \tilde{\Ac}^-$. Therefore the statement of Theorem \ref{theo-scat-BF} is completely equivalent to the original one in \cite{BF}. Furthermore,  in \cite{BF-blow}  the finite time blow-up is showed for \eqref{dip-NLS} in the unstable regime \eqref{uns-reg}, by assuming that the initial datum $u_0 $ belongs to $ \Ac^-,$ is cylindrical symmetric, and has finite variance in $x_3$-direction.\\ 

Recently, Gao and Wang \cite{GW} showed a finite time blow-up result for \eqref{dip-NLS} with arbitrarily large energy in a subset of the unstable regime. 
	\begin{theorem}[\cite{GW}] \label{theo-blow-GW}
		Let $\lambda_1$ and $\lambda_2$ satisfy 
		\begin{align} \label{cond-GW}
		\left\{
		\renewcommand*{\arraystretch}{1.3}
		\begin{array}{lcl}
		\lambda_1 <-\frac{8\pi}{3} \lambda_2 &\text{if}& \lambda_2>0, \\
		\lambda_1 <\frac{4\pi}{3} \lambda_2 & \text{if} & \lambda_2<0.
		\end{array}
		\right.
		\end{align} 
		Let $\phi$ be a ground state related to \eqref{ell-equ}. Let $u_0 \in \Sigma$ satisfy 
		\begin{align}
		\frac{E(u_0)M(u_0)}{E(\phi)M(\phi)} &\left(1-\frac{(V'(0))^2}{8E(u_0)V(0)}\right) \leq 1, \label{cond-1-GW} \\
		-N(u_0)M(u_0) &> -N(\phi)M(\phi), \label{cond-2-GW} \\
		V'(0) &\leq 0. \label{cond-3-GW}
		\end{align}
		Then the corresponding solution $u(t)$ to \eqref{dip-NLS} blows-up forward in finite time.
	\end{theorem}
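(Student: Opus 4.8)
The plan is to run a Glassey-type convexity argument on the variance \eqref{defi-V}, upgraded so as to tolerate a positive (indeed arbitrarily large) energy by exploiting the correction term in \eqref{cond-1-GW}. The starting point is the virial identity $V''(t)=G(u(t))$ together with conservation of mass and energy, which give $V''(t)=3E(u_0)-\tfrac12 H(u(t))$. Since $V'(t)=2\,\mathrm{Im}\int_{\R^3}\bar u\,(x\cdot\nabla u)\,dx$, the Cauchy--Schwarz (uncertainty) inequality yields $(V'(t))^2\le 4V(t)H(u(t))$, whence the closed differential inequality
\[
V''(t)\le 3E(u_0)-\frac{(V'(t))^2}{8V(t)}.
\]
The factor $8$ here is exactly the one entering \eqref{cond-1-GW}, a first indication that this is the right inequality to exploit.

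First I would record the relevant information about the ground state. Testing \eqref{ell-equ} against $\phi$ and against $x\cdot\nabla\phi$ (a Pohozaev computation, using the homogeneity of degree $-3$ of the kernel $K$ to treat the non-local term) produces the identities $H(\phi)=6M(\phi)$, $N(\phi)=-4M(\phi)$, $E(\phi)=M(\phi)$, and, consistently with $V''=G$, $G(\phi)=0$. These relations also pin down the sharp constant in the Gagliardo--Nirenberg-type inequality $-N(f)\le C_{\mathrm{GN}}\,H(f)^{3/2}M(f)^{1/2}$, with equality precisely at $\phi$. Feeding the hypothesis \eqref{cond-2-GW} into this sharp inequality converts it into the super-threshold statement $H(u_0)M(u_0)>H(\phi)M(\phi)$, which places the datum on the blow-up branch of the usual Gagliardo--Nirenberg dichotomy.

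Next I would reinterpret the remaining hypotheses. A direct manipulation shows that \eqref{cond-1-GW} is equivalent to the subcriticality of the variance-modified mass-energy,
\[
\left(E(u_0)-\frac{(V'(0))^2}{8V(0)}\right)M(u_0)\le E(\phi)M(\phi),
\]
while \eqref{cond-3-GW} says that the variance is initially non-increasing. The strategy is then to monitor $\mathcal{E}(t):=E(u_0)-\tfrac{(V'(t))^2}{8V(t)}$ along the flow: on any interval where $V'\le 0$ and $V''<0$ one computes $\mathcal{E}'(t)=-\tfrac{V'}{8V^2}\big(2V V''-(V')^2\big)\le 0$, so the bound $\mathcal{E}(t)M(u_0)\le E(\phi)M(\phi)$ propagates. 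Combining this subcriticality of the modified energy with the super-threshold bound of the previous step and the differential inequality above (after the substitution $W=V^{9/8}$, which turns it into $W''\le \tfrac{27}{8}E(u_0)\,W^{1/9}$), a comparison-ODE argument forces $W(t)$, and hence $V(t)$, to vanish in finite time; the sign condition \eqref{cond-3-GW} guarantees the collapse occurs forward in time. As $V(t)\to 0$ with fixed mass, $\|\nabla u(t)\|_{L^2}\to\infty$, the claimed finite-time blow-up.

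The main obstacle is to close this argument in the genuinely large-energy regime $E(u_0)M(u_0)\ge E(\phi)M(\phi)$, where the bare Gagliardo--Nirenberg dichotomy offers no spectral gap and the Cauchy--Schwarz bound $V''\le 3E(u_0)-\tfrac{(V')^2}{8V}$ alone is too lossy to produce concavity. The delicate point is that neither \eqref{cond-1-GW} nor \eqref{cond-2-GW} suffices in isolation: \eqref{cond-2-GW} only puts us on the correct branch, while \eqref{cond-1-GW} supplies, through the inward momentum $V'(0)\le 0$, the additional budget needed to defeat the positive energy. Making the monotonicity of $\mathcal{E}(t)$ rigorous and verifying that $V$ stays in the range where the comparison ODE drives it to zero --- in particular that concavity is not lost before the collapse --- is the decisive step; for data with $E(u_0)M(u_0)<E(\phi)M(\phi)$ the argument degenerates to the standard Glassey/Gagliardo--Nirenberg scheme, with \eqref{cond-1-GW} then automatically satisfied.
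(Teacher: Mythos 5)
There is a genuine gap, and it sits exactly where you flag ``the decisive step.'' Your plan never invokes the refined uncertainty estimate \eqref{est-GW} of Gao--Wang, which is the only mechanism through which the hypothesis \eqref{cond-2-GW} acts at times $t>0$; with plain Cauchy--Schwarz, \eqref{cond-2-GW} enters your argument solely at $t=0$ (to get $H(u_0)M(u_0)>H(\phi)M(\phi)$), and the propagation scheme is then circular. Concretely: granting your (correct) formula $\mathcal{E}'(t)=-\frac{V'}{8V^2}\bigl(2VV''-(V')^2\bigr)$, the sign $\mathcal{E}'\le 0$ needs $2VV''\le (V')^2$ at time $t$, which you can only extract from the \emph{strict} kinetic super-threshold bound $H(u(t))M(u(t))>H(\phi)M(\phi)$ at that same time $t$; but propagating that strict bound past a possible touching time $t_*$ with $H(u(t_*))M=H(\phi)M(\phi)$ requires knowing $-N(u(t_*))M>-N(\phi)M(\phi)$, i.e.\ \eqref{cond-2-GW} along the flow --- exactly what is unproven. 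Above the threshold (here $E(u_0)M(u_0)\ge E(\phi)M(\phi)$ is allowed), the sharp Gagliardo--Nirenberg inequality yields no contradiction at such a touching time, unlike in the sub-threshold dichotomy. Note also that $V''(0)=4E(u_0)+N(u_0)$ can be strictly positive under \eqref{cond-1-GW}--\eqref{cond-3-GW}, so the hypothesis $V''<0$ in your monotonicity lemma need not hold even initially. Finally, your comparison ODE $W''\le \frac{27}{8}E\,W^{1/9}$ has a \emph{positive} right-hand side and cannot by itself force $W\to 0$ in finite time: one needs a uniform escape velocity $\frac{d}{dt}\sqrt{V(t)}\le -c<0$, which is never established. (A minor normalization slip compounds this: by \eqref{deri-V} one has $V''=2H+3N=2G$, not $G$, so the correct inequalities are $V''=6E-H\le 6E-(V')^2/(4V)$; the factor $8$ in \eqref{cond-1-GW} comes from $(z')^2=(V')^2/(4V)$ and $h(\lambda_0)=\lambda_0/2$, not from your halved virial identity.)

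The actual proof --- cited by the paper from \cite{GW} and displayed, with reversed signs, as Steps 1--2 of the paper's proof of Theorem \ref{theo-scat-above} --- supplies precisely the missing trapping. Inserting $-N(u(t))=4E-V''(t)$ and $H(u(t))=6E-V''(t)$ into \eqref{est-GW} gives $(z'(t))^2\le h(V''(t))$ with $z=\sqrt{V}$ and $h(\lambda)=6E-\lambda-\frac{(4E-\lambda)^{2/3}}{(C_{\opt})^{2/3}(M(u))^{1/3}}$; with $\lambda_0$ as in \eqref{defi-lambda-0} one checks that \eqref{cond-2-GW} is equivalent to $V''(0)<\lambda_0$, that \eqref{cond-1-GW} is equivalent to $(z'(0))^2\ge h(\lambda_0)=\lambda_0/2$, and \eqref{cond-3-GW} then forces $z'(0)\le-\sqrt{h(\lambda_0)}$. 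A continuity argument on the decreasing branch of $h$ traps $V''(t)<\lambda_0$ (equivalently, $-N(u(t))M>-N(\phi)M(\phi)$ persists) and keeps $z'(t)\le -\sqrt{h(\lambda_0)}$ on the whole lifespan, so $z$ would vanish in finite time; since $M(u)\lesssim \sqrt{V(t)}\,\|\nabla u(t)\|_{L^2}$, the solution must blow up first. Your modified-energy functional $\mathcal{E}(t)$ is a reasonable repackaging of the same quantities, but to close it you would still have to prove the dynamical trapping of $-N(u(t))M(u(t))$ above $-N(\phi)M(\phi)$, and for that the refined estimate \eqref{est-GW} is indispensable --- the standing wave $u(t)=e^{it}\phi$, which satisfies \eqref{cond-1-GW} and \eqref{cond-3-GW} with equality and is global, shows that any proof must exploit the strict inequality \eqref{cond-2-GW} dynamically and not merely at $t=0$.
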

	The proof of the latter result, see \cite[Theorem 2]{GW} is based on the argument of Duyckaerts and Roudenko \cite{DR-beyond} using virial identities. Note that the restriction \eqref{cond-GW} ensures that the potential energy  takes negative values (see \cite[Lemma 4]{GW} and \cite{GW2} for some applications to the Hartree equation). 
	
	\subsection{Main results}
	We are ready to state our main results, and we start with the following  scattering criterion for \eqref{dip-NLS} in the unstable regime. The following theorem will play an essential role in the proof of the dynamics above the mass-energy threshold (see Theorem \ref{theo-scat-above} below). 
	\begin{theorem} \label{theo-scat-crite}
		Let $\lambda_1$ and $\lambda_2$ satisfy \eqref{cond-GW}. Let $\phi$ be a ground state related to \eqref{ell-equ}. Let $u(t)$ be a $H^1(\R^3)$-solution to \eqref{dip-NLS} defined on the maximal forward time interval $[0,T^*)$. Assume that
		\begin{align} \label{scat-crite}
		\sup_{t\in [0,T^*)} -N(u(t)) M(u(t)) < -N(\phi) M(\phi).
		\end{align}
		Then $T^*=\infty$ and the solution $u(t)$ scatters in $H^1(\R^3)$ forward in time.
	\end{theorem}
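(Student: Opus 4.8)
The plan is to follow the Kenig--Merle concentration/compactness and rigidity roadmap, with the hypothesis \eqref{scat-crite} feeding in through a variational coercivity estimate. First I would dispose of global existence. Since the mass $M(u(t))=M(u_0)$ is conserved, \eqref{scat-crite} forces $-N(u(t))<\frac{-N(\phi)M(\phi)}{M(u_0)}=:C_0$ uniformly on $[0,T^*)$. Writing the conserved energy as $H(u(t))=2E(u_0)-N(u(t))=2E(u_0)+(-N(u(t)))$ then gives $\sup_t H(u(t))\le 2E(u_0)+C_0<\infty$. The $H^1(\R^3)$-norm therefore stays bounded, and the blow-up alternative yields $T^*=\infty$.

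Second, I would extract the decisive coercivity consequence of \eqref{scat-crite}. Set $x:=H(u)M(u)$ and $y:=-N(u)M(u)$, and recall that the sharp Gagliardo--Nirenberg inequality associated with the Weinstein functional \eqref{weins-func} reads $y\le C_{\GN}\,x^{3/2}$, with optimizer $\phi$ satisfying $G(\phi)=0$, i.e. $H(\phi)=\tfrac32(-N(\phi))$; hence $y_\phi=\tfrac23 x_\phi$ and $C_{\GN}=\tfrac23 x_\phi^{-1/2}$. Inverting the inequality gives $x\ge(\tfrac32)^{2/3}x_\phi^{1/3}y^{2/3}$, so that
\[
G(u)M(u)=x-\tfrac32 y\ \ge\ \Big(\tfrac32\Big)^{2/3}x_\phi^{1/3}y^{2/3}-\tfrac32 y\ =\ y^{2/3}\Big[\big(\tfrac32\big)^{2/3}x_\phi^{1/3}-\tfrac32\,y^{1/3}\Big].
\]
A short computation shows the bracket is positive precisely when $y<\tfrac23 x_\phi=y_\phi=-N(\phi)M(\phi)$, so the strict uniform gap in \eqref{scat-crite} (i.e. $\sup_t y\le(1-\delta)y_\phi$ for some $\delta>0$) upgrades to a quantitative lower bound $G(u(t))\ge c(\delta)>0$ for all $t$. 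In other words, \eqref{scat-crite} traps the solution on the coercive, scattering-side region $\{G>0\}$.

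Third, with global existence, bounded kinetic energy, and uniform coercivity of the virial functional $G$ in hand, I would run the standard scheme. Small-data theory and stability/perturbation arguments reduce scattering to a global spacetime Strichartz bound; the non-local term is harmless at the level of nonlinear estimates because $f\mapsto K\ast f$ is bounded on every $L^p(\R^3)$, $1<p<\infty$ (CMS, Lemma~2.1), so the nonlinearity obeys the same product and fractional-Leibniz estimates as the cubic one. Assuming the global bound fails, a linear/nonlinear profile decomposition \emph{à la} Bahouri--G\'erard produces a critical (minimal non-scattering) element $u_c$ whose trajectory $\{u_c(t)\}$ is pre-compact in $H^1(\R^3)$ modulo spatial translations; the threshold bound \eqref{scat-crite}, and hence coercivity $G(u_c(t))\ge c(\delta)>0$, is inherited by $u_c$. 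A localized virial/Morawetz computation on $V_R(t)=\int\chi_R(x)|x|^2|u_c(t,x)|^2\,dx$ then yields $V_R''(t)\ge G(u_c(t))-o_R(1)\ge c(\delta)/2>0$ uniformly in $t$ once $R$ is large, using pre-compactness (and the sublinear growth of the spatial center) to absorb the truncation tails. This is incompatible with the bound $|V_R'(t)|\lesssim R$ coming from the uniform $H^1$ estimate, forcing $u_c\equiv 0$, a contradiction. Hence the global spacetime bound holds and $u$ scatters forward in time.

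The hard part will be the two places where the sign-indefinite, non-local potential enters. First, in the profile decomposition one must establish the asymptotic orthogonality (Pythagorean expansion) of the potential energy $N$ along profiles, including the vanishing of all cross terms built from $K\ast(\cdot)$; this does \emph{not} follow from any positivity or monotonicity of $N$ (which fails under \eqref{cond-GW}) but rather from the $L^p$-continuity of the convolution together with the multiplier bound $\wihat{K}(\xi)\in[-\tfrac{4\pi}{3},\tfrac{8\pi}{3}]$ and weak convergence, and it is exactly this orthogonality that transfers the threshold condition \eqref{scat-crite} to the individual profiles and thence to $u_c$. Second, the rigidity step requires carefully quantifying the errors produced by the truncation $\chi_R$ in the virial identity for the non-local term and showing they are $o_R(1)$ uniformly in time via the uniform-in-time compactness of $\{u_c(t)\}$; controlling the long-range tail of $K\ast|u_c|^2$ is the most delicate estimate.
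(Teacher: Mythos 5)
Your global-existence step and the overall Kenig--Merle skeleton (small data, profile decomposition, critical element, virial rigidity with the coercivity coming from the sharp Gagliardo--Nirenberg inequality) coincide with the paper's proof of Proposition \ref{prop-scat-equi}. But there is a genuine gap at the step you yourself flag as decisive: you claim that the threshold condition \eqref{scat-crite} ``is inherited by $u_c$'' via the asymptotic orthogonality of $N$, justified by weak convergence, the $L^p$-boundedness of $f\mapsto K\ast f$, and the multiplier bound on $\wihat{K}$. That machinery only yields the \emph{static} Pythagorean expansion of $N$ at $t=0$ for the initial-data decomposition. The hypothesis \eqref{scat-crite}, however, is a supremum over the whole lifespan, so admissibility of a profile is a condition on its entire forward nonlinear trajectory, and the fixed-time expansion of the data says nothing about $-N(v^j(t))M(v^j)$ for $t>0$. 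This is exactly why the Holmer--Roudenko wave-operator construction fails here, as the paper emphasizes in the introduction, and why the paper proves a new ingredient: the Pythagorean expansion \emph{along the bounded nonlinear flow} (Lemma \ref{lem-pytha-along-inls}), i.e. $H(u_n(t))=\sum_j H(v^j_n(t))+H(\tilde W^J_n(t))+o_{J,n}(1)$ uniformly on $[0,T]$, obtained by comparing $u_n$ with the approximate solution $\tilde u^J_n=\sum_j v^j_n(\cdot,\cdot-x^j_n)+\tilde W^J_n(t)$ through the stability Lemma \ref{lem-stability}; the expansion \eqref{expan-N-un-t} for $N$ then follows from energy conservation. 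Your proposal contains no substitute for this lemma. Moreover, even granting \eqref{expan-N-un-t}, passing from the bound on the \emph{sum} to a bound on each \emph{individual} profile requires every term $-N(v^j_n(t))$ to be nonnegative. This is precisely where \eqref{cond-GW} enters: under \eqref{cond-GW} one has $\lambda_1+\lambda_2\wihat{K}(\xi)<0$ by \eqref{fourier-trans-K}, hence $N(f)\leq 0$ for \emph{every} $f\in H^1$, so each summand in \eqref{expan-N-un-t} is nonnegative and is individually $\leq A$ (with strict inequality after multiplying by $M(v^j)<1$, giving \eqref{key}). You assert the opposite --- that ``positivity of $N$ fails under \eqref{cond-GW}'' --- and never use the sign; without it, one profile could exceed the level $A$ at some time while the total stays below it, and the induction breaks down, which is exactly the failure mode described in Remark \ref{rem-cond-GW}.

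Two smaller points. First, your uniform coercivity claim $G(u(t))\geq c(\delta)>0$ does not follow from the displayed inequality $G(u)M(u)\geq y^{2/3}\bigl[(\tfrac32)^{2/3}x_\phi^{1/3}-\tfrac32 y^{1/3}\bigr]$, whose right-hand side degenerates as $y=-N(u(t))M(u(t))\to 0$; the correct statement is the paper's Lemma \ref{lem-bound-below}, $G(f)\geq\nu H(f)$ and $E(f)\geq\tfrac\nu2 H(f)$, which is what the rigidity step actually needs (a uniform constant can then be recovered, for a fixed nonzero solution, from $-N\geq0$ under \eqref{cond-GW}, whence $H(u(t))\geq 2E(u_0)>0$). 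Second, because the admissible class is cut out by \emph{two} constraints, the minimization producing the critical element must be organized accordingly: the paper fixes $A<-N(\phi)M(\phi)$, defines $S(A,\delta)$ over the class \eqref{cond-A-delta}, and takes $\delta_c$ critical in the product $E\cdot M$ after normalizing $M(u_{0,n})=1$ by the scaling \eqref{scaling}; your generic ``minimal non-scattering element'' leaves unspecified in which quantity minimality is taken, and this matters because the strict mass drop $M(\psi^j)<1$ is what makes both constraints strict for each profile. Your rigidity sketch (precompactness modulo a sublinearly growing translation path, localized virial bound against $|V_R'|\lesssim R$) matches the paper's Step 3 and is fine in outline.
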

	
	The main strategy in the proof of this Theorem is a concentration/compactness and rigidity scheme in the same spirit of \cite{HR} (see also \cite{FXC} for a generalization to the all range of inter-critical powers). The main difference comparing our result to the one of Holmer and Roudenko in \cite{HR}, is that in the latter paper data below the mass-energy ground state are considered. This allows them to use the construction of wave operators to show that nonlinear profiles associated to the linear ones exist globally in time and have finite scattering norm. In our setting, such a construction does not work. To overcome the difficulty, we prove a Pythagorean expansion along bounded non-linear flows (see the proof of the existence of a critical solution, and in particular see Lemma \ref{lem-pytha-along-inls}, later on in the paper).
		
	
	\begin{remark}
		As mentioned above, condition \eqref{cond-GW} guarantees that the potential energy is always negative along the time evolution of \eqref{dip-NLS} which is needed in the proof (see Remark \ref{rem-cond-GW}.)
	\end{remark}
	
	\begin{remark}\label{rem-indep}
		Although the existence of ground states related to \eqref{ell-equ} is proved (see e.g. \cite{AS}), the uniqueness (up to symmetries) of positive ground states related to \eqref{ell-equ} is not known. However we point-out that the quantities
		\begin{align} \label{inde-quant}
		E(\phi)M(\phi), \quad H(\phi)M(\phi), \quad -N(\phi)M(\phi)
		\end{align}
		do not depend on the choice of the ground state $\phi$ (see \eqref{inde-quant-proof}).
	\end{remark}
	
	As a consequence of Theorem \ref{theo-scat-crite}, we can give the energy scattering result for \eqref{dip-NLS} above the mass-energy threshold, which is a complementary result of the one of Gao and Wang \cite{GW}, as the latter only addressed formation of singularities in finite time. 
	\begin{theorem} \label{theo-scat-above}
		Let $\lambda_1$ and $\lambda_2$ satisfy \eqref{cond-GW}. Let $\phi$ be a ground state related to \eqref{ell-equ}, and $u_0 \in \Sigma$ be such that
		\begin{align} 
		E(u_0)M(u_0) &\geq  E(\phi) M(\phi), \label{cond-1-above} \\
		\frac{E(u_0)M(u_0)}{E(\phi)M(\phi)}&\left(1-\frac{(V'(0))^2}{8 E(u_0)V(0)}\right) \leq 1, \label{cond-2-above} \\
		-N(u_0) M(u_0) &< -N(\phi) M(\phi),\label{cond-scat-1-above} \\
		V'(0) &\geq 0, \label{cond-scat-2-above}
		\end{align}
		Then the corresponding solution $u(t)$ to \eqref{dip-NLS} satisfies \eqref{scat-crite}. In particular, the solution exists globally and scatters in $H^1(\R^3)$ forward in time.
	\end{theorem}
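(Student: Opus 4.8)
The plan is to verify the scattering criterion \eqref{scat-crite} and then invoke Theorem \ref{theo-scat-crite}; it therefore suffices to prove that
\[
g(t):=-N(u(t))\,M(u(t)) < -N(\phi)\,M(\phi)=:g_c \qquad\text{for all }t\in[0,T^*),
\]
with a \emph{uniform} gap, so that the supremum stays strictly below $g_c$. First I would record the algebraic consequences of $\phi$ being a ground state. Testing \eqref{ell-equ} against $\phi$ and combining with the Pohozaev identity $G(\phi)=0$, where $G$ is as in \eqref{def:G}, gives $H(\phi)=-\tfrac32 N(\phi)$ and hence $-N(\phi)=4E(\phi)$, $H(\phi)=6E(\phi)$; multiplying by $M(\phi)$ yields $g_c=4E(\phi)M(\phi)=\tfrac23 H(\phi)M(\phi)$, all independent of the chosen ground state by Remark \ref{rem-indep}. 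These identities also fix the sharp constant in the Weinstein/Gagliardo--Nirenberg inequality \eqref{weins-func}, which I will use in the form $-N(f)M(f)\le C_*\big(H(f)M(f)\big)^{3/2}$ with $C_*=\tfrac23\big(H(\phi)M(\phi)\big)^{-1/2}$.

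Next I would set up the virial machinery, which is legitimate because $u_0\in\Sigma$, so that $V(t)$ in \eqref{defi-V} is finite and $C^2$ on $[0,T^*)$. Since $M$ and $E$ are conserved, energy conservation gives the pointwise identity $H(u(t))M(u(t))=g(t)+2E(u_0)M(u_0)$, and the virial identity $V''(t)=G(u(t))=2E(u(t))-\tfrac12\big(-N(u(t))\big)$ becomes, after multiplying by the constant $M(u_0)$ and writing $P(t):=M(u_0)V(t)$,
\[
P''(t)=2E(u_0)M(u_0)-\tfrac12\,g(t).
\]
I would then translate the hypotheses into this language: \eqref{cond-1-above} reads $\kappa:=E(u_0)M(u_0)-E(\phi)M(\phi)\ge0$; \eqref{cond-scat-1-above} reads $g(0)<g_c$; \eqref{cond-scat-2-above} reads $P'(0)\ge0$; and a direct manipulation shows that \eqref{cond-2-above} is exactly the refined virial bound $(P'(0))^2\ge 8\kappa\,P(0)$. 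Observe that as long as $g(t)<g_c$ one has $P''(t)>2E(u_0)M(u_0)-\tfrac12 g_c=2\kappa\ge0$, so $P$ is convex there and, since $P'(0)\ge0$, $V(t)$ is non-decreasing (indeed $V(t)\to\infty$ when $\kappa>0$); this is consistent with a globally defined, scattering solution and excludes the finite-variance blow-up of Theorem \ref{theo-blow-GW}, which is instead selected by the opposite signs $g(0)>g_c$ and $P'(0)\le0$.

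The heart of the matter is the trapping statement $g(t)<g_c$ on $[0,T^*)$, and this is the step I expect to be the main obstacle. I would argue by contradiction through a continuity/first-exit argument: since $t\mapsto g(t)$ is continuous along the $H^1$-flow and $g(0)<g_c$, let $t_0$ be the first time with $g(t_0)=g_c$, and show $t_0$ cannot exist. This is precisely where the refined condition $(P'(0))^2\ge 8\kappa P(0)$ together with $P'(0)\ge0$ must be used, mirroring the Duyckaerts--Roudenko virial-convexity analysis \cite{DR-beyond} employed by Gao and Wang \cite{GW} for the companion blow-up Theorem \ref{theo-blow-GW}. Concretely, I would track the coupled evolution of $P$ and $g$ through the auxiliary quantity $2P(t)P''(t)-(P'(t))^2$, equivalently the convexity of $t\mapsto\sqrt{P(t)}$, whose derivative equals $-P(t)\,g'(t)$ by a computation justified as for the other virial identities, and combine it with the sharp Gagliardo--Nirenberg inequality above to show that on $[0,t_0)$ the outgoing configuration $P'(0)\ge0$ obstructs $g$ from reaching $g_c$, contradicting $g(t_0)=g_c$. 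The only genuinely new bookkeeping compared with \cite{GW} is the reversal of inequalities; the structural computation is identical after replacing the incoming sign $V'(0)\le0$ by the outgoing sign $V'(0)\ge0$ and the reversed strict inequality \eqref{cond-scat-1-above}. The same quantitative analysis produces a uniform gap $\sup_t g(t)\le g_c-\delta$ with $\delta>0$ determined by the strict inequality \eqref{cond-scat-1-above} and by $\kappa$, which is exactly the strict supremum required by \eqref{scat-crite}. With the criterion established, Theorem \ref{theo-scat-crite} yields $T^*=\infty$ and forward-in-time scattering in $H^1(\R^3)$.
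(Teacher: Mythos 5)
Your overall architecture is the paper's: reduce to the criterion \eqref{scat-crite}, translate the hypotheses \eqref{cond-1-above}--\eqref{cond-scat-2-above} into virial language (your $(P'(0))^2\ge 8\kappa P(0)$ is exactly the paper's normalized condition $(z'(0))^2\ge h(\lambda_0)=\lambda_0/2$ with $z=\sqrt V$), and run a Duyckaerts--Roudenko ``beyond the threshold'' convexity argument to trap $g(t)=-N(u(t))M(u(t))$ strictly below $g_c=-N(\phi)M(\phi)$. But at the step you yourself flag as the main obstacle there is a genuine gap: the only ingredient that couples the momentum $V'(t)$ at \emph{positive} times to the Gagliardo--Nirenberg deficit is the refined uncertainty-type estimate of Gao--Wang, \eqref{est-GW},
\[
\Bigl(\ima\int_{\R^3}\overline f\,x\cdot\nabla f\,dx\Bigr)^2\le\|xf\|_{L^2}^2\Bigl(H(f)-\frac{(-N(f))^{\frac23}}{(C_{\opt})^{\frac23}(M(f))^{\frac13}}\Bigr),
\]
which yields $(z'(t))^2\le h(V''(t))$ with $h(\lambda)=6E-\lambda-(4E-\lambda)^{2/3}(C_{\opt})^{-2/3}M^{-1/3}$, a function with an interior minimum at $\lambda_0$ satisfying $h(\lambda_0)=\lambda_0/2$ and $h'(\lambda_0)=0$. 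You never invoke this estimate; instead you propose to combine the \emph{plain} sharp inequality $-N(f)M(f)\le C_{\opt}\left(H(f)M(f)\right)^{3/2}$ with the identity $\frac{d}{dt}\bigl(2PP''-(P')^2\bigr)=2PP'''\propto -P\,g'$. Neither closes the first-exit argument: the plain inequality contains no information whatsoever about $V$ or $V'$, and the identity merely re-encodes $g'$, whose sign you do not control. Indeed, the relations $P''=c_1-c_2\,g$, $g(0)<g_c$, $P'(0)\ge 0$, $(P'(0))^2\ge 8\kappa P(0)$, viewed as a pure ODE system with $g$ a free continuous function, are perfectly consistent with $g$ crossing $g_c$; some pointwise-in-time coupling between $P'$ and $g$ is indispensable, and \eqref{est-GW} is precisely that coupling. (Via \eqref{iden-N-H} and \eqref{iden-lambda-0}, $g(t)<g_c$ is equivalent to $V''(t)>\lambda_0$, and on that range $h(V'')<V''/2$, which is what makes $z=\sqrt V$ strictly convex and propagates $z'(t)\ge z'(0)\ge\sqrt{h(\lambda_0)}$, forbidding the exit.)

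A second, smaller deficiency: \eqref{scat-crite} demands $\sup_{t\in[0,T^*)}g(t)<g_c$, so even a correct soft trapping argument giving $g(t)<g_c$ pointwise would not suffice (the supremum could still equal $g_c$). The paper extracts the uniform gap quantitatively: the strict inequality \eqref{cond-scat-1-above}, i.e.\ $V''(0)>\lambda_0$, produces $z'(t_0)\ge\sqrt{h(\lambda_0)}+2\epsilon_0$ for small $t_0$, and the Taylor bound $h(\lambda)\le h(\lambda_0)+a(\lambda-\lambda_0)^2$ (legitimate precisely because $h'(\lambda_0)=0$) converts this $\epsilon_0$-gain into $V''(t)\ge\lambda_0+\sqrt{\epsilon_0}/C$, hence $\sup_t g(t)\le g_c-\delta_0 M$; note the gap enters through a square root of the initial margin, not linearly in $\kappa$ and \eqref{cond-scat-1-above} as your closing sentence suggests. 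Your phrase ``the same quantitative analysis produces a uniform gap'' is thus the conclusion of the missing computation rather than an argument for it. Finally, a minor bookkeeping slip: by \eqref{deri-V} one has $V''(t)=2H(u(t))+3N(u(t))=2G(u(t))$, not $G(u(t))$ (the paper's sentence following \eqref{def:G} contains the same wobble); this factor of two does not affect your translation $(P'(0))^2\ge 8\kappa P(0)$ of \eqref{cond-2-above}, which is correct.
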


	Our next result is the following blow-up or grow-up result for \eqref{dip-NLS} in the unstable regime.
	\begin{theorem} \label{theo-blow-crite}
		Let $\lambda_1$ and $\lambda_2$ satisfy \eqref{uns-reg}. Let $u(t)$ be a $H^1(\R^3)$-solution to \eqref{dip-NLS} defined on the maximal forward time interval $[0,T^*)$. Assume that
		\begin{align} \label{blow-crite}
		\sup_{t\in [0,T^*)} G(u(t)) \leq -\delta
		\end{align}
		for some $\delta>0$. Then either $T^*<\infty$, or $T^*=\infty$ and there exists a sequence of time $t_n \rightarrow \infty$ such that
		\[
		\|\nabla u(t_n)\|_{L^2(\R^3)}\rightarrow \infty 
		\]
		as $n\rightarrow \infty.$  In the latter case we say that the solution grows-up. In particular, if $u_0$ has finite variance, i.e. $V(0)<\infty$, then $T^*<\infty,$ namely the solution blows-up in finite time.
	\end{theorem}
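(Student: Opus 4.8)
The plan is to route everything through the virial functional $G$ and its algebraic relation with the conserved quantities. Since $E(f)=\tfrac12(H(f)+N(f))$ and $G(f)=H(f)+\tfrac32 N(f)$ by \eqref{def:G}, eliminating $N$ and invoking energy conservation gives, for every $t\in[0,T^*)$,
\[
G(u(t))=3E(u_0)-\tfrac12 H(u(t))=3E(u_0)-\tfrac12\|\nabla u(t)\|_{L^2(\R^3)}^2 .
\]
Thus $G(u(t))\to-\infty$ if and only if $\|\nabla u(t)\|_{L^2(\R^3)}\to\infty$, so the grow-up alternative is precisely the assertion that the gradient cannot remain bounded. Consequently the whole theorem reduces to showing that, under \eqref{blow-crite}, a global solution cannot satisfy $\sup_{t\ge0}\|\nabla u(t)\|_{L^2(\R^3)}<\infty$.

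I would first dispose of the finite-variance case by the classical Glassey argument. If $V(0)<\infty$ then $u(t)$ remains in $\Sigma$, the exact virial identity $\frac{d^2}{dt^2}V(t)=G(u(t))$ associated with \eqref{defi-V} is available, and \eqref{blow-crite} yields $V''(t)\le-\delta$. Integrating twice gives $V(t)\le V(0)+V'(0)t-\tfrac\delta2 t^2$, whose right-hand side becomes negative in finite time while $V(t)\ge0$ always; hence $T^*<\infty$. This settles the last sentence of the statement.

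For the general case I would argue by contradiction: assume $T^*=\infty$ and $\sup_t\|\nabla u(t)\|_{L^2(\R^3)}=K<\infty$, so that $u$ is uniformly bounded in $H^1(\R^3)$, and run a truncated virial. With $\varphi_R(x)=R^2\varphi(|x|/R)$, $\varphi$ radial, $\varphi(r)=r^2$ for $r\le1$ and $\varphi''\le2$, $\varphi'(r)/r\le2$, set $V_R(t)=\int_{\R^3}\varphi_R|u(t)|^2\,dx$. Two ingredients drive the argument: the sublinear bound $|V_R'(t)|\le C(K,M)\,R$ (from $|\nabla\varphi_R|\lesssim R$ and Cauchy-Schwarz), and the localized identity
\[
V_R''(t)=G(u(t))+\mathrm{Err}_R(t),
\]
where $\mathrm{Err}_R$ gathers the biharmonic term of size $O(R^{-2}M)$, the Hessian defect $\int(\partial^2_{jk}\varphi_R-2\delta_{jk})\,\mathrm{Re}(\partial_j u\,\overline{\partial_k u})\,dx$ supported in $\{|x|>R\}$, and the nonlinear defects $\int_{|x|>R}(\cdots)|u|^4\,dx$ and $\int_{|x|>R}(\cdots)(K\ast|u|^2)|u|^2\,dx$. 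The conditions $\varphi''\le2$ and $\varphi'(r)/r\le2$ render the Hessian defect nonpositive, so it may be discarded, and the biharmonic term is $O(R^{-2})$. If the two nonlinear tails can be forced below $\delta/2$ uniformly in $t$ for $R$ large, then $V_R''(t)\le-\delta/2$, and integrating once contradicts $|V_R'(t)|\le C(K,M)\,R$ for large $t$, which forces $\|\nabla u(t_n)\|_{L^2(\R^3)}\to\infty$ along a sequence.

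The hard part will be exactly this uniform-in-time control of the nonlinear tails, because in the absence of finite variance or radial symmetry there is no a priori tightness: mass may escape to spatial infinity while $\|\nabla u(t)\|_{L^2(\R^3)}$ stays bounded, so $\int_{|x|>R}|u(t)|^4\,dx$ need not be small uniformly in $t$. The nonlocal defect is the most delicate point, since $K\ast|u|^2$ feels $u$ at all scales; here I would exploit the continuity of $f\mapsto K\ast f$ on $L^p(\R^3)$ recalled after \eqref{defi-N}, together with Hölder's inequality, the uniform $H^1$-bound, and the spatial localization of the weight, so that the surviving nonlinear contributions are either absorbed into the discarded Hessian defect or estimated by the $O(R^{-2})$ mass term. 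Once $\mathrm{Err}_R(t)\le\delta/2$ is achieved, the convexity-versus-linear-growth contradiction closes the argument.
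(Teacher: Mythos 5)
Your overall architecture matches the paper's: reduce to ruling out a global solution with $\sup_t\|\nabla u(t)\|_{L^2}<\infty$, dispose of the finite-variance case by Glassey's convexity argument, and run a truncated virial with the quadratic weight, discarding the nonpositive Hessian defect and the $O(R^{-2})$ biharmonic term. The genuine gap is exactly the point you flag yourself and then do not resolve: the smallness of the exterior terms $\int_{|x|>R}|u(t)|^4\,dx$ (and of the analogous exterior tails produced by the dipolar term via the estimate of \cite[Section 6]{BF}) \emph{uniformly in $t\in[0,\infty)$} for fixed large $R$. As you correctly observe, without radial symmetry or finite variance there is no a priori tightness and mass may translate to infinity, so this uniform control is simply unavailable; and none of the tools you then invoke can supply it. The continuity of $f\mapsto K\ast f$ on $L^p$ gives bounds with no decay in $R$; the exterior $L^4$ tail carries neither an $R^{-2}$ factor nor a sign, so it can be absorbed neither into the $O(R^{-2})$ mass term nor into the discarded Hessian defect (the nonlocal defect produces exterior tails of exactly the same type as the local one, not signed terms). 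Hence the pivotal step ``once $\mathrm{Err}_R(t)\le\delta/2$ is achieved'' is precisely what the proposal does not achieve, and the single integration against $|V_R'(t)|\le C(K,M)R$ never gets off the ground.

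The paper's missing mechanism, following Du--Wu--Zhang \cite{DWZ}, is a \emph{time-windowed} substitute for tightness, and it forces a different endgame than yours. One introduces a second, dimensionless cutoff $\psi_R(x)=\vartheta(|x|/R)$ supported in $\{|x|\gtrsim R\}$ and notes that $\bigl|\frac{d}{dt}\int\psi_R|u(t)|^2\,dx\bigr|\lesssim R^{-1}$ by Cauchy--Schwarz, since $|\nabla\psi_R|\lesssim R^{-1}$ and the gradient is assumed bounded --- note the $R^{-1}$, in contrast with the $O(R)$ bound for the quadratic weight. Integrating once yields: for every $\eta>0$ there is $C>0$ with $\int_{|x|\gtrsim R}|u(t)|^2\,dx\le\eta+o_R(1)$ for all $t\in[0,T]$, $T:=\eta R/C$; Gagliardo--Nirenberg then converts exterior $L^2$-smallness plus the uniform gradient bound into exterior $L^4$-smallness on that same window, giving $z''_{\varphi_R}(t)\le-\delta$ only for $t\le T\sim R$. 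Because the differential inequality holds only on an $R$-dependent window, integrating once is no longer enough (it produces an $O(R)$ decrement of $z'_{\varphi_R}$, comparable to its a priori $O(R)$ size, with an unfavorable constant); instead one integrates \emph{twice} over $[0,T]$ and compares $-\frac{\delta}{2}T^2\sim-R^2$ with $z_{\varphi_R}(T)\ge0$, $z_{\varphi_R}(0)=o_R(1)R^2$, and $z'_{\varphi_R}(0)T=o_R(1)R^2$, which is the actual contradiction. Both the auxiliary exterior-mass functional with its $R^{-1}$ flux bound and the double integration on the window $[0,\eta R/C]$ are absent from your proposal, and without them the argument does not close.
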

	
	The strategy to prove the previous theorem is in the spirit of Du, Wu, and Zhang, see \cite{DWZ}, but here we need to control the non-local term involved in \eqref{dip-NLS}.
	We point-out that the hypothesis \eqref{blow-crite} is actually non-empty. Indeed, Bellazzini and the second author  proved in \cite{BF-blow} that \eqref{blow-crite} is always satisfied provided the initial datum belongs to \eqref{defi-tilde-A-}, or equivalently to \eqref{defi-A-}, namely below the mass-energy threshold.
	
\begin{corollary} 
Let $\lambda_1$ and $\lambda_2$ satisfy \eqref{uns-reg}, and $\phi$ be a ground state related to \eqref{ell-equ}. Assume that $u_0 \in H^1(\R^3)$ is such that
	\begin{equation}
	\left\{
		\begin{aligned} 
		E(u_0)M(u_0) &< E(\phi) M(\phi), \\
		H(u_0) M(u_0)&> H(\phi) M(\phi),
		\end{aligned}
	\right.
	\end{equation}

\noindent and let $u(t)$ the solution to \eqref{dip-NLS}. Then either $T^*<\infty$, or $T^*=\infty$ and there exists a sequence of time $t_n \rightarrow \infty$ such that
		\[
		\|\nabla u(t_n)\|_{L^2(\R^3)}\rightarrow \infty 
		\]
as $n\to\infty.$ If $V(0)<\infty,$  blow-up always occurs in finite time.	
\end{corollary}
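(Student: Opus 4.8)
The plan is to read the corollary as a direct specialization of Theorem \ref{theo-blow-crite}, so the entire task reduces to checking that the two inequalities imposed on $u_0$ force the virial functional $G$ to stay uniformly negative along the flow. First I would observe that the pair
\[
E(u_0)M(u_0) < E(\phi)M(\phi), \qquad H(u_0)M(u_0) > H(\phi)M(\phi)
\]
is exactly the membership condition $u_0 \in \tilde{\Ac}^-$ recorded in \eqref{defi-tilde-A-}. By Remark \ref{rem-indep}, the products appearing in \eqref{inde-quant}, in particular $E(\phi)M(\phi)$ and $H(\phi)M(\phi)$, are independent of the choice of ground state $\phi$, so this membership is well defined even though uniqueness of positive ground states related to \eqref{ell-equ} is not known.

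Next I would invoke the equivalence $\tilde{\Ac}^- \equiv \Ac^-$ established in \cite[Proposition 3.2]{BF-blow}, which transfers the hypotheses into the variational description \eqref{defi-A-}, i.e. the energy gap $E(u_0) < \gamma(c)$ with $c = M(u_0)$ together with the sign condition $G(u_0) < 0$. The reason for passing to this second characterization is that it is the form on which the flow-invariance mechanism rests: the set $\Ac^-$ is invariant under the evolution \eqref{dip-NLS}, and along any solution issuing from it the quantity $G(u(t))$ stays bounded away from zero from above.

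Concretely, I would then quote the result of Bellazzini and the second author in \cite{BF-blow}, already recalled in the paragraph preceding the corollary, that for data in \eqref{defi-tilde-A-} (equivalently \eqref{defi-A-}) the uniform estimate \eqref{blow-crite}, namely
\[
\sup_{t \in [0,T^*)} G(u(t)) \le -\delta,
\]
holds for some $\delta > 0$. With this in hand the hypotheses of Theorem \ref{theo-blow-crite} are verified, and its conclusion yields precisely the stated dichotomy: either $T^* < \infty$, or $T^* = \infty$ with a sequence $t_n \to \infty$ along which $\|\nabla u(t_n)\|_{L^2(\R^3)} \to \infty$; and in the finite-variance case $V(0) < \infty$ the grow-up alternative is excluded, forcing finite-time blow-up.

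Since every analytic ingredient is already available, there is no genuine obstacle in the proof itself; the only point demanding care is the well-definedness and the equivalence of the two threshold sets. The subtlety is that the threshold is phrased through a ground state $\phi$ that is not known to be unique, so I would make explicit that the invariance of the quantities in \eqref{inde-quant} under the choice of $\phi$ (Remark \ref{rem-indep}) is exactly what legitimizes treating \eqref{defi-tilde-A-} and \eqref{defi-A-} as intrinsic conditions on $u_0$, after which the reduction to Theorem \ref{theo-blow-crite} is immediate.
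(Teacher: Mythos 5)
Your proposal is correct and coincides with the paper's own (very brief) justification: the hypotheses are exactly membership in $\tilde{\Ac}^-$ as in \eqref{defi-tilde-A-}, which by \cite[Proposition 3.2]{BF-blow} equals $\Ac^-$, for which \cite{BF-blow} gives the uniform bound \eqref{blow-crite}, so Theorem \ref{theo-blow-crite} applies directly. Your added remark that Remark \ref{rem-indep} makes the threshold condition independent of the choice of ground state is a correct and welcome clarification, fully consistent with the paper.
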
	
	
	Our last result addresses  the long time dynamics for \eqref{dip-NLS} at the mass-energy threshold.
	
	\begin{theorem}\label{theo-dyna-at}
		Let $\lambda_1$ and $\lambda_2$ satisfy \eqref{uns-reg}. Let $\phi$ be a ground state related to \eqref{ell-equ}. Let $u_0 \in H^1(\R^3)$ be such that
		\begin{align} 
		E(u_0)M(u_0) = E(\phi) M(\phi). \label{cond-ener-at}
		\end{align}
		\begin{itemize}
			\item If 
			\begin{align} \label{cond-scat-at}
			H(u_0) M(u_0) < H(\phi) M(\phi),
			\end{align}
			then the corresponding solution $u(t)$ to \eqref{dip-NLS} satisfies
			\begin{align} \label{est-solu-at-1}
			H(u(t)) M(u(t)) < H(\phi) M(\phi)
			\end{align}
			for all $t$ in the existence time. In particular, the solution exists globally in time. Moreover, if we assume in addition that $\lambda_1$ and $\lambda_2$ satisfy \eqref{cond-GW}, then the solution either scatters in $H^1(\R^3)$ forward in time, or there exist a time sequence of times $t_n\rightarrow \infty$, a ground state $\tilde{\phi}$ related to \eqref{ell-equ}, and a sequence $\{y_n\}_{n\geq1}\subset\R^3$ such that
			\begin{equation} \label{conver-tn}
			u(t_n, \cdot-y_n) \rightarrow e^{i\theta} \mu \tilde{\phi}(\mu\cdot) \quad \text{ strongly in } H^1(\R^3)
			\end{equation}
			as $n\rightarrow \infty$, for some $\theta \in \R$ and $\mu>0$.
			\item If
			\begin{equation} \label{cond-at}
			H(u_0) M(u_0)= H(\phi) M(\phi),
			\end{equation}
			then there exists a ground state $\tilde{\phi}$ related to \eqref{ell-equ} such that the solution $u(t)$ to \eqref{dip-NLS} satisfies $u(t,x) = e^{i\mu^2t} e^{i\theta} \mu \tilde{\phi}(\mu x)$ for some $\theta\in \R$ and $\mu>0$.
			\item If
			\begin{align} \label{cond-blow-at}
			H(u_0) M(u_0)> H(\phi) M(\phi),
			\end{align}
			then the corresponding solution $u(t)$ to \eqref{dip-NLS} satisfies
			\begin{align} \label{est-solu-at-3}
			H(u(t)) M(u(t)) > H(\phi) M(\phi)
			\end{align}
			for all $t$ in the lifespan of the solution. Furthermore, the solution either blows-up forward in finite time,
			\begin{itemize}
				\item[i)] or there exists $t_n\rightarrow \infty$ such that $\|\nabla u(t_n)\|_{L^2(\R^3)} \rightarrow \infty$ as $n\rightarrow \infty$;
				\item[ii)] or there exists $t_n \rightarrow \infty$ such that \eqref{conver-tn} holds for some sequence $\{y_n\}_{n\geq1}\subset\R^3,$  $\theta\in \R,$ and  $\mu>0.$
			\end{itemize}
\noindent In addition, if $V(0)<\infty$, then the possibility depicted in $i)$ is excluded. 
		\end{itemize}
	\end{theorem}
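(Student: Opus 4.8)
The plan is to reduce all three cases to the behaviour of the single scaling-invariant quantity $y(t):=H(u(t))M(u(t))$, exploiting the sharp Gagliardo--Nirenberg (GN) inequality $-N(f)\le C_{\GN}H(f)^{3/2}M(f)^{1/2}$, whose optimizers are exactly the ground states (up to scaling, translation and phase). First I would record the algebraic consequences of the threshold assumption \eqref{cond-ener-at}. Writing $\kappa:=H(\phi)M(\phi)$ and using the Pohozaev relation $G(\phi)=0$ (which holds for a ground state by \eqref{ell-equ} and \eqref{def:G}) together with $E(\phi)=\tfrac12(H(\phi)+N(\phi))$, one gets $-N(\phi)M(\phi)=\tfrac23\kappa$, $E(\phi)M(\phi)=\tfrac16\kappa$ and $C_{\GN}=\tfrac23\kappa^{-1/2}$. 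Combining the GN inequality with the conserved identity $E(u(t))M(u(t))\equiv E(\phi)M(\phi)=\tfrac16\kappa$ then yields the master relations
\begin{equation*}
-N(u(t))M(u(t))=y(t)-\tfrac13\kappa,\qquad G(u(t))M(u(t))=\tfrac12\bigl(\kappa-y(t)\bigr),
\end{equation*}
together with the lower bound $E(u)M(u)\ge g(y)$, where $g(y):=\tfrac12\bigl(y-C_{\GN}y^{3/2}\bigr)$ is strictly concave with unique maximum $g(\kappa)=\tfrac16\kappa$; moreover $E(u)M(u)=g(y)$ holds if and only if $u$ is a GN optimizer, i.e.\ a modulated, scaled, translated ground state.

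The second step is the trapping/rigidity argument. Since $y$ is continuous and $E(u(t))M(u(t))=g(\kappa)$ for all $t$, a crossing $y(t_0)=\kappa$ would force $g(y(t_0))=g(\kappa)=E(u(t_0))M(u(t_0))$, hence equality in the GN inequality at $t_0$; then $u(t_0)$ would be a scaled ground state, and by uniqueness for \eqref{dip-NLS} together with the phase/scaling invariance \eqref{scaling} the solution would be a standing wave with $y\equiv\kappa$. Thus each region $\{y<\kappa\}$, $\{y=\kappa\}$, $\{y>\kappa\}$ is invariant. Under \eqref{cond-scat-at} this gives $y(t)<\kappa$, i.e.\ \eqref{est-solu-at-1}, which bounds $\|\nabla u(t)\|_{L^2}$ and hence forces $T^*=\infty$ through the blow-up alternative; under \eqref{cond-blow-at} it gives $y(t)>\kappa$, i.e.\ \eqref{est-solu-at-3}. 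The equality case \eqref{cond-at} is then immediate: $y(0)=\kappa$ forces equality in GN for $u_0$, so $u_0=e^{i\theta}\mu\tilde\phi(\mu\cdot)$ for a ground state $\tilde\phi$ (the translation being absorbed into $\tilde\phi$ by translation invariance of \eqref{ell-equ}), and uniqueness applied to the scaled standing wave $e^{it}\tilde\phi$ via \eqref{scaling} gives $u(t,x)=e^{i\mu^2t}e^{i\theta}\mu\tilde\phi(\mu x)$.

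For the two remaining cases I would run a dichotomy on $\sup_t y$, respectively $\inf_t y$. Under \eqref{cond-scat-at}: if $\sup_{t\in[0,\infty)}y(t)<\kappa$, the first master relation gives $\sup_t\bigl(-N(u)M(u)\bigr)<\tfrac23\kappa=-N(\phi)M(\phi)$, so \eqref{scat-crite} holds and Theorem~\ref{theo-scat-crite} (under the additional assumption \eqref{cond-GW}) yields scattering; otherwise $\sup_t y(t)=\kappa$, which by continuity and strict invariance can be approached only along some $t_n\to\infty$, and along it the master relations give $\frac{-N(u(t_n))M(u(t_n))}{(y(t_n))^{3/2}}\to C_{\GN}$, so $\{u(t_n)\}$ is a GN minimizing sequence and the compactness of such sequences produces $y_n\in\R^3$ with $u(t_n,\cdot-y_n)\to e^{i\theta}\mu\tilde\phi(\mu\cdot)$ in $H^1$, i.e.\ \eqref{conver-tn}. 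Under \eqref{cond-blow-at}: if $\inf_{t\in[0,T^*)}y(t)>\kappa$, the second master relation gives $\sup_tG(u(t))\le-\delta<0$, so Theorem~\ref{theo-blow-crite} produces finite-time blow-up or the grow-up alternative~(i) (and only finite-time blow-up when $V(0)<\infty$); otherwise $\inf_ty(t)=\kappa$ is approached along $t_n$ with $y(t_n)\to\kappa$, whence $\|\nabla u(t_n)\|_{L^2}$ stays bounded, the blow-up alternative rules out $T^*<\infty$, so $t_n\to\infty$, and the same compactness argument yields alternative~(ii).

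The hardest point is precisely the boundary case where $\sup_t y$ (or $\inf_t y$) equals $\kappa$ without being attained: there the strict inequality $y(t)\neq\kappa$ makes neither the scattering criterion \eqref{scat-crite} (which needs a uniform strict gap) nor the blow-up criterion \eqref{blow-crite} directly applicable. Resolving it requires extracting a time sequence saturating the GN inequality in the limit, verifying that it must escape to infinity (using strict invariance to exclude finite accumulation and the blow-up alternative to exclude $T^*<\infty$), and then upgrading the asymptotic saturation into strong $H^1$ convergence, modulo symmetries, to a genuine ground state. Some care is also needed to confirm that the limiting profile actually solves \eqref{ell-equ} rather than being merely a GN optimizer, and that the translation can be absorbed, so that the conclusion takes the precise form \eqref{conver-tn}.
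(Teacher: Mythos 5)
Your proposal is correct and follows essentially the same route as the paper's own proof: strict invariance of the three regions $\{H M \lessgtr H(\phi)M(\phi)\}$ via rigidity of Gagliardo--Nirenberg optimizers combined with uniqueness of the flow, then a dichotomy between a uniform gap (feeding the scattering criterion of Theorem \ref{theo-scat-crite} and the blow-up/grow-up criterion of Theorem \ref{theo-blow-crite}) and boundary saturation along a sequence $t_n\to\infty$, resolved by the compactness of minimizing sequences (Lemma \ref{lem-compact}). Your scale-invariant bookkeeping via $y(t)=H(u(t))M(u(t))$ and the ``master relations'' is equivalent to the paper's initial normalization $M(u_0)=M(\phi)$, $E(u_0)=E(\phi)$ by the scaling \eqref{scaling}; incidentally, your dichotomy on $\inf_t y(t)$ in the third case is the correct reading of the paper's argument, which there is written with $\sup$.
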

\begin{remark} To the best of our knowledge, the first work addressing the long time dynamics for the focusing cubic NLS at the ground state threshold is due to Duyckaerts and Roudenko \cite{DR-threshold}. Their proof is based on delicate spectral properties of the ground state for the cubic NLS. Recently, the first author in \cite{Dinh-DCDS} considered the dynamics for the NLS at the threshold for the whole range of inter-critical powers. 
\end{remark}

\noindent 	The proof of the latter theorem is based on the scattering and blow-up/grow-up criteria given respectively in Theorems \ref{theo-scat-crite} and \ref{theo-blow-crite}, and the compactness of minimizing sequence for the Gagliardo-Nirenberg-type inequality related to \eqref{dip-NLS} (see Lemma \ref{lem-compact}). We also take the advantage of the scaling invariance \eqref{scaling}. For more details, we refer to Section \ref{S5}.
	
	\begin{remark}
Throughout the paper, we do not consider the dynamics for radial solutions to \eqref{dip-NLS} since the dipole-nonlinearity vanishes when applied to radially symmetric  functions, and therefore \eqref{dip-NLS} would reduce to the classical NLS equation. It is due to the fact the average of the dipolar kernel $K(x)$ vanishes on $\Sb^2$ (see \cite{CMS}).  Hence this symmetry hypothesis won't be useful to obtain other weaker sufficient conditions for the global dynamics of solutions to \eqref{dip-NLS}.
	\end{remark}	
\begin{remark}
All the results stated above can be proved for negative times in similar fashions.  
\end{remark}	
	
\subsection{Organization of the paper} The paper is organized as follows. In Section \ref{S2}, we give some preliminary results including the local well-posedness, the small data scattering, and the stability result for \eqref{dip-NLS}, and some properties of ground states related to \eqref{ell-equ}. In Section \ref{S3}, we give the proof of the scattering criterion given in Theorem \ref{theo-scat-crite} and its application to the energy scattering above the mass-energy threshold given in Theorem \ref{theo-scat-above}. Section \ref{S4} is devoted to the proof of the blow-up/grow-up result given in Theorem \ref{theo-blow-crite}. Finally, in Section \ref{S5}, we study the dynamics of solutions to \eqref{dip-NLS} with data lying exactly at the mass-energy threshold given in Theorem \ref{theo-dyna-at}.
	
	\section{Preliminary results and notation}
	\label{S2}
	\setcounter{equation}{0}
	In the next subsection, we recall some basic facts on local theory for \eqref{dip-NLS}. We recall that $L(t):= e^{it\frac{1}{2}\Delta}$ stands for the linear Schr\"odinger propagator, namely $v(t,x)=L(t)u_0(x):= e^{it\frac{1}{2}\Delta} u_0(x)$ solves $i\partial_t v+\frac12\Delta v=0$ with $v(0,x)=u_0(x).$ In what follows, given an interval $I\subseteq \mathbb R,$ bounded or unbounded, we denote by $L^p(I,L^q)$ the Bochner space of vector-valued functions $f:I\mapsto L^q(\R^3)$ endowed with the usual norm $$\|f\|_{L^p(I,L^q)}=\left(\int_{I}\|f(s)\|_{L^q(\R^3)}^p ds\right)^{1/p}, \quad p\in[1,\infty).$$ 
	For $p=\infty$ we adopt the usual modification. Here $L^q=L^q(\R^3)$ are the usual Lebesgue spaces. For an initial datum $u_0$ in $H^1(\R^3)$ satisfying $V(0)<\infty,$ we will also use the notation $u_0\in\Sigma:=H^1(\R^3)\cap L^2(\R^3, |x|^2dx).$
Since we will only work in the three-dimensional space, we will drop the notation $\R^3$ from now on.

	\subsection{Local theory} Let us start with the following nonlinear estimates. 
	
	\begin{lemma}\label{lem-non-est}
		There exists $C>0$ such that for any time interval $I \subset \R$, the following estimates hold
		\begin{align*}
		\||u|^2 u + (K\ast |u|^2) u\|_{L^{8/5}(I, L^{4/3})} &\leq C \|u\|^2_{L^8(I, L^4)} \|u\|_{L^{8/3}(I,L^4)}, \\
		\||u|^2 u + (K \ast |u|^2) u \|_{L^{8/3}(I,L^{4/3})} &\leq C \|u\|^3_{L^8(I,L^4)}, \\
		\|\nabla(|u|^2 u + (K\ast |u|^2) u)\|_{L^{8/5}(I, L^{4/3})} &\leq C \|u\|^2_{L^8(I, L^4)} \|\nabla u\|_{L^{8/3}(I,L^4)}.
		\end{align*}
	\end{lemma}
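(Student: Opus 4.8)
The plan is to treat the cubic nonlinearity $F(u):=|u|^2 u + (K\ast|u|^2)u$ by splitting it into its \emph{local} part $|u|^2 u$ and its \emph{non-local} part $(K\ast|u|^2)u$, reducing each of the three bounds to a fixed-time spatial estimate in $L^{4/3}$ followed by a single Hölder integration in the time variable. The only non-elementary input will be the boundedness of the convolution operator $f\mapsto K\ast f$ on $L^p$ for $1<p<\infty$, recalled from \cite[Lemma 2.1]{CMS}; I will invoke it exclusively with $p=2$, which is precisely the exponent produced by the Hölder splittings below.

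First I would establish the pointwise-in-time bound $\|F(u)\|_{L^{4/3}} \le C\|u\|_{L^4}^3$. For the local term this is immediate, since $\||u|^2u\|_{L^{4/3}} = \|u\|_{L^4}^3$. For the non-local term, Hölder with $\tfrac34 = \tfrac12 + \tfrac14$ gives $\|(K\ast|u|^2)u\|_{L^{4/3}} \le \|K\ast|u|^2\|_{L^2}\|u\|_{L^4}$, and the $L^2$-boundedness of $K\ast$ combined with $\||u|^2\|_{L^2} = \|u\|_{L^4}^2$ yields $\|K\ast|u|^2\|_{L^2}\le C\|u\|_{L^4}^2$. For the gradient bound I would differentiate, using that convolution commutes with $\nabla$, to write
\[
\nabla\big[(K\ast|u|^2)u\big] = \big[K\ast\nabla(|u|^2)\big]u + (K\ast|u|^2)\nabla u ,
\]
where $\nabla(|u|^2) = 2\,\rea(\bar u\,\nabla u)$, so that $|\nabla(|u|^2)|\le 2|u|\,|\nabla u|$. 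Applying the $L^2$-boundedness of $K\ast$ to both summands, together with the Hölder relations $\tfrac12 = \tfrac14+\tfrac14$ and $\tfrac34=\tfrac12+\tfrac14$, and the elementary pointwise inequality $|\nabla(|u|^2u)|\le C|u|^2|\nabla u|$ for the local piece, I obtain the fixed-time estimate $\|\nabla F(u)\|_{L^{4/3}}\le C\|u\|_{L^4}^2\|\nabla u\|_{L^4}$.

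With these spatial estimates in hand, each of the three claimed bounds follows from one application of Hölder's inequality in time. For the second estimate I would compute directly $\big\|\,\|u\|_{L^4}^3\big\|_{L^{8/3}(I)} = \|u\|_{L^8(I,L^4)}^3$. For the first and third estimates I would factor $\|u\|_{L^4}^3 = \|u\|_{L^4}^2\cdot\|u\|_{L^4}$, respectively $\|u\|_{L^4}^2\cdot\|\nabla u\|_{L^4}$, and apply Hölder in $t$ with the exponent relation $\tfrac58 = \tfrac14 + \tfrac38$, which places the quadratic factor in $L^4(I)$ — namely $\|u\|_{L^8(I,L^4)}^2$ — and the remaining linear factor in $L^{8/3}(I)$, producing exactly the stated right-hand sides.

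None of the steps is genuinely delicate; once the convolution bound is granted, the lemma amounts to careful bookkeeping of Hölder exponents. The one point deserving attention is the non-local term in the gradient estimate: one must justify commuting $\nabla$ past $K\ast$ and then control $\|K\ast\nabla(|u|^2)\|_{L^2}$, rather than attempting to pass the derivative onto the last factor $u$, since $K\ast|u|^2$ is \emph{not} controlled in $L^\infty$. This is exactly where the $L^p$-continuity of $K\ast$ from \cite[Lemma 2.1]{CMS} is indispensable, and it dictates that every Hölder splitting be arranged so that the convolved factor lands in $L^2$.
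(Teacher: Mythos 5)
Your proof is correct, and it follows essentially the same route as the paper, which simply defers to the proof of Proposition 3.4 in \cite{CMS}: there too the estimates are obtained from the $L^p$-boundedness of $f\mapsto K\ast f$ (with the convolved factor placed in $L^2$) combined with H\"older's inequality in space at fixed time and then in the time variable. Your careful handling of the gradient of the non-local term, commuting $\nabla$ past $K\ast$ onto $|u|^2$ rather than attempting an $L^\infty$ bound on $K\ast|u|^2$, matches the argument in \cite{CMS} exactly.
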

	For the proof of the above nonlinear estimates, we refer the reader to the proof of Proposition 3.4 in \cite{CMS}. Here we note that $(8/3,4)$ is a Schr\"odinger $L^2$-admissible pair and $(8/5, 4/3)$ is its dual pair. See the monographs \cite{Ca, LP, Tao} and references therein for a more detailed and complete treatment on the Strichartz estimates.
	
	Thanks to the above nonlinear estimates, Strichartz estimates, and the contraction mapping argument, we have the following local well-posedness in $H^1$ for \eqref{dip-NLS}, see \cite{CMS}.  
	
	\begin{lemma} \label{lem-lwp}
		Let $\lambda_1, \lambda_2 \in \R$ and $u_0 \in H^1$. Then there exist $T_*,T^*\in (0,\infty]$ and a unique solution to \eqref{dip-NLS} satisfying
		\[
		u \in C((-T_*,T^*), H^1) \cap L^{8/3}_{\loc}((-T_*,T^*), L^4).
		\]
		The solution satisfies the conservation laws of mass and energy, i.e. $M(u(t))= M(u_0)$ and $E(u(t))=E(u_0)$ for all $t\in (-T_*,T^*)$. Moreover, there is a blow-up alternative:  either the solution exists globally in time or  $T^*<\infty$ (resp. $T_*<\infty$) and 
		\[
		\lim_{t\nearrow T^*}\|\nabla u(t)\|_{L^2}=\infty \quad \left(\text{resp. }  \lim_{t\searrow -T_*} \|\nabla u(t)\|_{L^2} =\infty \right),
		\]
	
	\end{lemma}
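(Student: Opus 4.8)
The plan is to recast the Cauchy problem \eqref{dip-NLS} in its Duhamel (mild) formulation and solve it by a fixed-point argument. Setting
\[
\Phi(u)(t) := L(t) u_0 - i \int_0^t L(t-s)\left[\lambda_1 |u(s)|^2 u(s) + \lambda_2 (K\ast |u(s)|^2) u(s)\right] ds,
\]
a solution on $I=[0,T]$ is precisely a fixed point of $\Phi$. I would work in the complete metric space given by a closed ball of the Strichartz-type space
\[
X(I):= \left\{ u \ : \ \|u\|_{X(I)} := \|u\|_{L^\infty(I,H^1)} + \|u\|_{L^{8/3}(I,L^4)} + \|\nabla u\|_{L^{8/3}(I,L^4)} < \infty \right\},
\]
equipped with the weaker metric $d(u,v)=\|u-v\|_{L^{8/3}(I,L^4)}$, which is standard to avoid differentiating the nonlinearity when proving the contraction. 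The forward and backward intervals are treated symmetrically, so it suffices to argue on $[0,T]$.

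The core estimates combine the Strichartz inequalities for $L(t)$ (homogeneous and inhomogeneous, for the $L^2$-admissible pair $(8/3,4)$ and its dual $(8/5,4/3)$, applied to both $u$ and $\nabla u$) with the nonlinear estimates of Lemma \ref{lem-non-est}; note that the latter already absorb the nonlocal term $(K\ast|u|^2)u$ on the same footing as the local cubic term, by the $L^p$-boundedness of $f\mapsto K\ast f$ from \cite[Lemma 2.1]{CMS}. The only norm in Lemma \ref{lem-non-est} that is not part of $\|\cdot\|_{X(I)}$ is $\|u\|_{L^8(I,L^4)}$, which I would control by Hölder in time together with Sobolev embedding $H^1\hookrightarrow L^4$,
\[
\|u\|_{L^8(I,L^4)} \leq |I|^{1/8}\|u\|_{L^\infty(I,L^4)} \leq C\, T^{1/8}\|u\|_{L^\infty(I,H^1)}.
\]
Feeding this into Lemma \ref{lem-non-est} produces a factor $T^{1/4}$ in front of a cubic expression in $\|u\|_{X(I)}$, so that for $T=T(\|u_0\|_{H^1})$ sufficiently small $\Phi$ maps the ball into itself and is a contraction for the metric $d$. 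Banach's fixed point theorem then yields a unique solution in the ball; unconditional uniqueness in $C(I,H^1)\cap L^{8/3}_{\loc}(I,L^4)$ follows by the usual argument of showing that two solutions coincide on a small subinterval and then propagating the agreement by a connectedness/continuity argument. Continuity in time with values in $H^1$ is read off directly from the integral equation and the Strichartz bounds.

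For the conservation laws, the formal computation relies on integration by parts and on the fact that $\widehat{K}$ is real-valued, so $f\mapsto K\ast f$ is self-adjoint and the potential energy \eqref{defi-N} is a real, symmetric quadratic form in $|u|^2$; this makes $\frac{d}{dt}M(u(t))=\frac{d}{dt}E(u(t))=0$ at the formal level. To make this rigorous for merely $H^1$ data, I would regularize: approximate $u_0$ by a sequence in $H^2$, for which the corresponding solutions are smooth enough to legitimate the integration by parts, and then pass to the limit using the continuous dependence on the data furnished by the same Strichartz/contraction estimates. Finally, the blow-up alternative follows from the fact that the local existence time depends only on $\|u_0\|_{H^1}$: if $T^*<\infty$ and $\|\nabla u(t)\|_{L^2}$ did not tend to $\infty$, then mass conservation would keep $\|u(t)\|_{H^1}$ bounded along a sequence $t_n\nearrow T^*$, giving a uniform lower bound on the existence time starting from $t_n$ and hence an extension of the solution beyond $T^*$, a contradiction.

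I expect the genuinely routine part to be the contraction itself, once Lemma \ref{lem-non-est} is granted; the main subtlety is the nonlocal structure, which is precisely what forces one to invoke the boundedness of $K\ast$ on every $L^p$ and its self-adjointness, and the rigorous justification of the conservation laws for rough data through the regularization argument. The $T^{1/4}$ gain above is what reflects the energy-subcritical ($\dot H^{1/2}$-critical, but $H^1$-subcritical) nature of the problem and guarantees an existence time controlled solely by the $H^1$-norm of the datum.
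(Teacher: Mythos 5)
Your proposal is correct and follows essentially the same route as the paper, which proves the lemma (deferring to \cite{CMS}) by exactly the combination you spell out: the Duhamel fixed-point argument via Strichartz estimates for the $L^2$-admissible pair $(8/3,4)$ and its dual, the nonlinear estimates of Lemma \ref{lem-non-est} (with the H\"older-in-time gain $T^{1/4}$ coming from controlling $\|u\|_{L^8(I,L^4)}$ by $T^{1/8}\|u\|_{L^\infty(I,H^1)}$), an approximation argument for the conservation laws, and the standard continuation argument for the blow-up alternative. There is nothing to correct.
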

	The next result gives  a sufficient condition for scattering,  see \cite{BF} for a proof. 
	\begin{lemma} \label{lem-scat-cond}
		Let $u:[0,\infty) \times \R^3 \rightarrow \C$ be a $H^1$-solution to \eqref{dip-NLS}. If $\|u\|_{L^\infty([0,\infty), H^1)} <\infty$ and $\|u\|_{L^8([0,\infty),L^4)} <\infty$, then the solution scatters in $H^1$ forward in time in the sense of \eqref{defi-scat}.
	\end{lemma}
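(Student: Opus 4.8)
The plan is to realize the scattering state through Duhamel's formula and to prove that the Duhamel integral defining it converges in $H^1$. Write the nonlinearity as $F(u):=\lambda_1|u|^2u+\lambda_2(K\ast|u|^2)u$, set $M:=\|u\|_{L^\infty([0,\infty),H^1)}$ and $S:=\|u\|_{L^8([0,\infty),L^4)}$, and recall the representation
\[
u(t)=L(t)u_0-i\int_0^t L(t-s)F(u(s))\,ds.
\]
Both quantities $M$ and $S$ are finite by hypothesis, and throughout I will use that $(8/3,4)$ is $L^2$-admissible with dual pair $(8/5,4/3)$.

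First I would upgrade the hypotheses into \emph{global} finiteness of the admissible Strichartz norms $\|u\|_{L^{8/3}([0,\infty),L^4)}$ and $\|\nabla u\|_{L^{8/3}([0,\infty),L^4)}$. Since $S<\infty$, I can partition $[0,\infty)$ into finitely many consecutive intervals $I_1,\dots,I_J$ on each of which $\|u\|_{L^8(I_j,L^4)}\le\eta$, with $\eta>0$ to be fixed small. On a single $I_j=[a_j,b_j]$, Strichartz estimates applied to Duhamel's formula started at $a_j$, together with the nonlinear bound
\[
\|\nabla F(u)\|_{L^{8/5}(I_j,L^{4/3})}\le C\|u\|_{L^8(I_j,L^4)}^2\,\|\nabla u\|_{L^{8/3}(I_j,L^4)}
\]
from Lemma \ref{lem-non-est}, yield
\[
\|\nabla u\|_{L^{8/3}(I_j,L^4)}\le C\|u(a_j)\|_{H^1}+C\eta^2\,\|\nabla u\|_{L^{8/3}(I_j,L^4)}\le CM+C\eta^2\,\|\nabla u\|_{L^{8/3}(I_j,L^4)},
\]
and the analogous bound for $\|u\|_{L^{8/3}(I_j,L^4)}$ via the first estimate of Lemma \ref{lem-non-est}. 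Choosing $\eta$ so that $C\eta^2\le\tfrac12$ absorbs the last term and gives $\|\nabla u\|_{L^{8/3}(I_j,L^4)}\le 2CM$ uniformly in $j$. Summing the $(8/3)$-th powers over the $J$ intervals bounds $\|\nabla u\|_{L^{8/3}([0,\infty),L^4)}\le J^{3/8}\cdot 2CM<\infty$, and likewise for $\|u\|_{L^{8/3}([0,\infty),L^4)}$. Here the non-local term is handled on the same footing as the local one precisely because Lemma \ref{lem-non-est} already incorporates the boundedness of $f\mapsto K\ast f$ on Lebesgue spaces.

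With the admissible norms globally finite, the scattering state will be $u^+:=u_0-i\int_0^\infty L(-s)F(u(s))\,ds$, and I would show the integral converges in $H^1$ by a tail estimate. For $t_2>t_1$, the dual Strichartz estimate combined with Lemma \ref{lem-non-est} gives
\[
\Big\|\int_{t_1}^{t_2}L(-s)F(u(s))\,ds\Big\|_{H^1}\le C\,\|u\|_{L^8([t_1,t_2],L^4)}^2\Big(\|u\|_{L^{8/3}([t_1,t_2],L^4)}+\|\nabla u\|_{L^{8/3}([t_1,t_2],L^4)}\Big).
\]
As $t_1\to\infty$ the prefactor $\|u\|_{L^8([t_1,t_2],L^4)}^2\to0$ because $S<\infty$, while the admissible factors stay bounded by the global quantities produced above; hence $L(-t)u(t)=u_0-i\int_0^t L(-s)F(u(s))\,ds$ is Cauchy in $H^1$ and converges to some $u^+\in H^1$. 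Finally, since $L(t)=e^{it\frac12\Delta}$ is a Fourier multiplier it is an isometry on $H^1$, so
\[
\|u(t)-L(t)u^+\|_{H^1}=\|L(-t)u(t)-u^+\|_{H^1}\longrightarrow0,
\]
which is exactly \eqref{defi-scat}. The only step demanding genuine care is the bootstrap in the first paragraph, namely closing the estimates for $u$ and $\nabla u$ simultaneously in the admissible norm by exploiting the smallness of the scattering norm on each sub-interval while keeping the $H^1$ bound uniform; once Lemma \ref{lem-non-est} is available the non-local term creates no additional obstruction, and the remaining construction and convergence are routine tail estimates.
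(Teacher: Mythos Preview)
Your proof is correct and follows the standard Strichartz-based route to scattering from a finite $L^8L^4$ norm: partition $[0,\infty)$ into sub-intervals with small scattering norm, bootstrap to recover the admissible $L^{8/3}W^{1,4}$ bound globally, and then show $L(-t)u(t)$ is Cauchy in $H^1$ via a tail estimate on the Duhamel integral. The paper does not give an in-text proof of this lemma, referring instead to \cite{BF}; your argument is precisely the classical one that reference contains, with the non-local term handled through Lemma~\ref{lem-non-est} exactly as you indicate.
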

	Note that the $L^8L^4$ space is invariant under the scaling \eqref{scaling}. See also the original paper by Cazenave and Weissler \cite{CW} in the context of the so-called \emph{rapidly decaying solution} for NLS. It is also worth mentioning that $(8,4)$ is a $\dot H^{1/2}$ Strichartz admissible pair. The following provides a small data scattering result for \eqref{dip-NLS}.
	\begin{lemma}\label{lem-small-data}
		There exists $\delta>0$ such that, provided
		\[
		\|L(t) u_0\|_{L^8(\R,L^4)} \leq \delta,
		\]
		where $u_0\in H^1,$
		then the corresponding solution to \eqref{dip-NLS} exists globally in time and satisfies
		\begin{align*}
		\|u\|_{L^8(\R,L^4)} &\leq 2 \|L(t) u_0\|_{L^8(\R,L^4)}, \\
		\|u\|_{L^\infty(\R, H^1)} + \|u\|_{L^{8/3}(\R, W^{1,4})} &\leq C\|u_0\|_{H^1},
		\end{align*}
		for some constant $C>0$. In particular, the solution scatters in $H^1$ in both directions.		
	\end{lemma}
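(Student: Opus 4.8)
The plan is to build the global solution directly by a single fixed-point argument on the whole line, taking advantage of the fact that the smallness hypothesis $\|L(t)u_0\|_{L^8(\R,L^4)}\le\delta$ is imposed globally in time. The argument juggles two families of space-time norms: the scattering norm $\|\cdot\|_{L^8(\R,L^4)}$, attached to the $\dot H^{1/2}$-admissible pair $(8,4)$, and the $H^1$-Strichartz norms $\|\cdot\|_{L^\infty(\R,H^1)}$ and $\|\cdot\|_{L^{8/3}(\R,W^{1,4})}$, attached to the $L^2$-admissible pair $(8/3,4)$ and its dual $(8/5,4/3)$. The decisive structural point is that the scattering norm is \emph{self-contained}: via the inhomogeneous Strichartz estimate with $\dot H^{1/2}$-admissible output $\big\|\int_0^t L(t-s)F\,ds\big\|_{L^8L^4}\lesssim\|F\|_{L^{8/3}L^{4/3}}$ and the second estimate of Lemma \ref{lem-non-est}, it closes on itself, whereas the $H^1$ norms only close once the scattering norm is known to be small.

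Writing $A:=\|L(t)u_0\|_{L^8(\R,L^4)}\le\delta$, I would set up the Duhamel map
\[
\Phi(u)(t)=L(t)u_0-i\int_0^t L(t-s)\big(\lambda_1|u|^2u+\lambda_2(K\ast|u|^2)u\big)(s)\,ds
\]
on the complete metric space
\[
X=\Big\{u:\ \|u\|_{L^8(\R,L^4)}\le 2A,\ \|u\|_{L^\infty(\R,H^1)}+\|u\|_{L^{8/3}(\R,W^{1,4})}\le M\Big\},\qquad d(u,v)=\|u-v\|_{L^8(\R,L^4)},
\]
with $M:=2C\|u_0\|_{H^1}$. For the scattering norm the estimate above combined with Lemma \ref{lem-non-est} gives $\|\Phi(u)\|_{L^8L^4}\le A+C\|u\|_{L^8L^4}^3\le A(1+8CA^2)\le 2A$ once $8C\delta^2\le1$. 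Applying the $L^2$-admissible inhomogeneous Strichartz estimate with and without $\nabla$, together with the first and third estimates of Lemma \ref{lem-non-est}, I obtain $\|\Phi(u)\|_{L^\infty H^1}+\|\Phi(u)\|_{L^{8/3}W^{1,4}}\le C\|u_0\|_{H^1}+C\|u\|_{L^8L^4}^2\,\|u\|_{L^{8/3}W^{1,4}}\le C\|u_0\|_{H^1}+C(2\delta)^2M\le M$ for $\delta$ small; here it is essential that the smallness of the scattering norm absorbs the dangerous factor. A difference version of the cubic estimate, coming from the pointwise bound $\big||u|^2u-|v|^2v\big|\lesssim(|u|^2+|v|^2)|u-v|$ for the local term and the $L^p\to L^p$ boundedness of $f\mapsto K\ast f$ for the nonlocal one (both exactly along the lines of Proposition 3.4 of \cite{CMS}), yields $\|\Phi(u)-\Phi(v)\|_{L^8L^4}\le C\delta^2\,d(u,v)\le\tfrac12 d(u,v)$, so $\Phi$ is a contraction on $(X,d)$.

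The unique fixed point $u$ is a solution of \eqref{dip-NLS} on all of $\R$, and by the uniqueness in Lemma \ref{lem-lwp} it agrees with the maximal solution, which is therefore global and satisfies the two asserted a priori bounds. Since $\|u\|_{L^\infty(\R,H^1)}<\infty$ and $\|u\|_{L^8(\R,L^4)}<\infty$, Lemma \ref{lem-scat-cond} gives scattering forward in time; scattering backward in time follows by applying the same conclusion to $v(t,x):=\overline{u(-t,x)}$, which solves \eqref{dip-NLS} because $K$ is real and the nonlinearity is gauge covariant, or equally directly by checking that $u^{\pm}:=u_0-i\int_0^{\pm\infty}L(-s)\big(\lambda_1|u|^2u+\lambda_2(K\ast|u|^2)u\big)(s)\,ds$ are well defined in $H^1$ and that $\|u(t)-L(t)u^{\pm}\|_{H^1}\to0$.

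I expect the only genuinely delicate point to be the choice of metric. Because the nonlinearity is cubic, measuring differences in the $H^1$-strength norms would force a derivative onto $u-v$ and prevent the contraction from closing; one must therefore estimate differences solely in the derivative-free scattering norm $L^8L^4$, while retaining the $H^1$ bounds as a constraint defining the set $X$. This forces one to verify that $X$ is \emph{complete} for the weaker metric $d$, which follows from the lower semicontinuity of the $L^\infty(\R,H^1)$ and $L^{8/3}(\R,W^{1,4})$ norms under $L^8(\R,L^4)$ convergence; this is the standard Cazenave--Weissler trick, and once it is in place all remaining estimates are immediate consequences of Lemma \ref{lem-non-est} and the Strichartz inequalities.
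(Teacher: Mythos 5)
Your proof is correct and follows exactly the route the paper intends: the paper omits the proof, noting only that Lemma \ref{lem-small-data} is ``a small refinement'' of \cite[Lemma 3.2]{BF}, and your contraction argument --- smallness imposed on $\|L(t)u_0\|_{L^8(\R,L^4)}$ rather than on $\|u_0\|_{H^1}$, the Kato-type inhomogeneous estimate from $L^{8/3}(\R,L^{4/3})$ into $L^8(\R,L^4)$ paired with the second bound of Lemma \ref{lem-non-est}, the $H^1$-Strichartz norms closed via the first and third bounds, and the Cazenave--Weissler device of contracting in the derivative-free metric on a set complete by weak lower semicontinuity --- is precisely that standard refinement, consistent with the norms appearing in Lemma \ref{lem-stability}. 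No gaps: the difference estimate for the nonlocal term via the $L^p$-boundedness of $f\mapsto K\ast f$ and the time-reversal argument for backward scattering both check out.
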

	In \cite[Lemma 3.2]{BF}, the small data scattering was stated with small $H^1$-norm of initial data.  Lemma \ref{lem-small-data} can be proved with a small refinement of the argument in \cite[Lemma 3.2]{BF}. We omit the details.
	
	Using Lemma \ref{lem-non-est} and an argument similar to \cite[Proposition 2.3]{HR}, we have the following stability result for \eqref{dip-NLS}. See also \cite[Section 3.7]{Tao} and reference therein for this kind of classical results.
	\begin{lemma}\label{lem-stability}
		Let $0 \in I \subseteq \R$ and $\tilde{u}: I \times \R^3 \rightarrow \C$ be a solution to 
		\[
		i \partial_t \tilde{u} + \frac{1}{2} \Delta \tilde{u} - F(\tilde{u}) = e, \quad 
		\]
		where 
		\begin{align} \label{defi-F}
		F(\tilde{u}):= \lambda_1 |\tilde{u}|^2 \tilde{u} + \lambda_2 (K\ast |\tilde{u}|^2) \tilde{u}.
		\end{align}
		Assume that $u(0,x)  = \tilde{u}_0(x)$ and
		\[
		\|\tilde{u}\|_{L^\infty(I,H^1)} \leq M_1, \quad \|\tilde{u}\|_{L^8(\R,L^4) \cap L^\infty(I,L^3)} \leq M_2
		\]
		for some constants $M_1,M_2>0$. Let $u_0 \in H^1$ be such that
		\[
		\|u_0-\tilde{u}_0\|_{H^1} \leq M_3, \quad \|L(t)(u_0- \tilde{u}_0)\|_{L^8(\R,L^4) \cap L^\infty(I,L^3)} \leq \varep
		\]
		for some $M_3>0$ and some $0<\varep<\varep_1=\varep_1(M_1,M_2,M_3)$. Suppose that
		\[
		\|e\|_{L^{8/5}(I,W^{1,4/3})} + \|e\|_{L^{8/3}(I,L^{4/3})} \leq \varep.
		\]
		Then there exists a unique solution $u:I\times \R^3\rightarrow \C$ to \eqref{dip-NLS} with initial data $u(0,x) = u_0(x)$ satisfying
		\begin{align*}
		\|u- \tilde{u}\|_{L^8(\R,L^4) \cap L^\infty(I,L^3)} &\leq C(M_1,M_2,M_3) \varep, \\
		\|u\|_{L^{8/3}(I,W^{1,4})} + \|u\|_{L^8(\R,L^4) \cap L^\infty(I,L^3)} &\leq C(M_1,M_2,M_3).
		\end{align*}
	\end{lemma}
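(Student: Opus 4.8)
The plan is to run the standard short-time perturbation argument in the spirit of \cite[Proposition 2.3]{HR} and \cite[Section 3.7]{Tao}, the only genuinely new ingredient being the treatment of the non-local term, which I would control via the boundedness of $f\mapsto K\ast f$ on every $L^p$, $p\in(1,\infty)$. Set $w:=u-\tilde u$, so that the sought-after solution is $u=\tilde u+w$ and, subtracting the two equations, $w$ must solve
\[
i\partial_t w+\tfrac12\Delta w=\big(F(\tilde u+w)-F(\tilde u)\big)-e,\qquad w(0)=u_0-\tilde u_0,
\]
with $F$ as in \eqref{defi-F}. First I would fix a small parameter $\eta>0$ (to be chosen) and partition $I=\bigcup_{j=1}^{J}I_j$ into $J=J(M_2,\eta)$ consecutive subintervals $I_j=[t_j,t_{j+1}]$ on each of which $\|\tilde u\|_{L^8(I_j,L^4)}\leq\eta$; this is possible since $\|\tilde u\|_{L^8(\R,L^4)}\leq M_2$. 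On each $I_j$ one also bounds the energy-level admissible norms of $\tilde u$ (e.g. $\|\tilde u\|_{L^{8/3}(I_j,W^{1,4})}$) by Strichartz estimates using $M_1$, $M_2$ and the smallness of the critical norm; this is routine.

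The core estimate is a difference version of Lemma \ref{lem-non-est}. Expanding $F(\tilde u+w)-F(\tilde u)$, every resulting term is trilinear with at least one factor equal to $w$, of the schematic form $(\text{two factors from }\{\tilde u,w\})\times w$ or $\big(K\ast(\text{product of two factors})\big)\times(\text{third factor})$ with at least one $w$ present. Applying H\"older's inequality exactly as in the proof of \cite[Proposition 3.4]{CMS}, distributing the gradient over the factors by the Leibniz rule, and using that $K\ast$ is bounded on $L^{4/3}$ and $L^4$, I would obtain on any subinterval $J'\subseteq I$ a bound of the form
\[
\big\|F(\tilde u+w)-F(\tilde u)\big\|_{L^{8/5}(J',W^{1,4/3})\,\cap\,L^{8/3}(J',L^{4/3})}\lesssim\big(\|\tilde u\|_{L^8(J',L^4)}^2+\|w\|_{L^8(J',L^4)}^2\big)\,\|w\|_{S(J')},
\]
where $S(J')$ abbreviates the combined norm $L^8(J',L^4)\cap L^\infty(J',L^3)\cap L^{8/3}(J',W^{1,4})$. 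The upshot is that the non-local contribution obeys exactly the same trilinear bound as the local cubic one.

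On the first subinterval I would close the estimate by a continuity (bootstrap) argument: Strichartz estimates together with the hypotheses on $w(0)$ and on $e$, combined with the difference estimate above, give $\|w\|_{S(I_1)}\lesssim\varep+\eta^2\|w\|_{S(I_1)}+\|w\|_{S(I_1)}^3$. Choosing $\eta$ so small that $\eta^2$ times the Strichartz constant is at most $\tfrac12$ absorbs the quadratic term, and for $\varep$ small enough the cubic term is absorbed as well, yielding $\|w\|_{S(I_1)}\lesssim\varep$. To propagate to the next interval I would use the identity
\[
L(\cdot-t_{j+1})w(t_{j+1})=L(\cdot-t_j)w(t_j)-i\int_{t_j}^{t_{j+1}}L(\cdot-s)\big(F(\tilde u+w)-F(\tilde u)-e\big)(s)\,ds
\]
to control the free evolution of the data at $t_{j+1}$, which together with the subinterval estimate produces a recursion of the form $A_{j+1}\leq(1+C\eta^2)A_j+C\varep$ for the quantities $A_j:=\|L(\cdot-t_j)w(t_j)\|_{S(I)}$, with $A_1\lesssim\varep$. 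Iterating over the finitely many subintervals gives $A_J\leq C(M_1,M_2,M_3)\,\varep$, and summing the subinterval bounds yields the claimed estimates; the energy-level conclusions follow from a final application of Strichartz using $M_1$, $M_3$ and the already-controlled critical norms.

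The main technical obstacle is the bookkeeping across subintervals: the number $J$ grows with $M_2$, so the final constant $C(M_1,M_2,M_3)$ and the smallness threshold $\varep_1$ degrade (roughly exponentially in $J$), and one must choose $\varep_1=\varep_1(M_1,M_2,M_3)$ small enough that the bootstrap hypothesis of smallness of $\|w\|_{S(I_j)}$ persists simultaneously on \emph{all} $J$ subintervals. The non-local term itself introduces no essential difficulty once the difference estimate is in place, precisely because the boundedness of $K\ast$ on $L^p$ makes it behave like the local cubic term.
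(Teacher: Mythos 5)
Your proposal is correct and coincides with the argument the paper intends: the paper gives no detailed proof of Lemma \ref{lem-stability}, deferring to the long-time perturbation scheme of \cite[Proposition 2.3]{HR} and \cite[Section 3.7]{Tao}, which is precisely the subdivision-plus-bootstrap argument you carry out, with the non-local term reduced to the local cubic case via Lemma \ref{lem-non-est} (i.e.\ \cite[Proposition 3.4]{CMS}) and the $L^p$-boundedness of $f\mapsto K\ast f$. One cosmetic remark: when the Leibniz derivative falls on a $\tilde u$ factor, your schematic difference estimate should also carry cross terms of the form $\|\nabla \tilde u\|_{L^{8/3}(J',L^4)}\big(\|\tilde u\|_{L^8(J',L^4)}+\|w\|_{L^8(J',L^4)}\big)\|w\|_{L^8(J',L^4)}$ rather than being absorbed into $\big(\|\tilde u\|^2+\|w\|^2\big)\|w\|_{S(J')}$, but since you pre-bound $\|\tilde u\|_{L^{8/3}(I_j,W^{1,4})}$ on each subinterval and retain the $\eta$-smallness of $\|\tilde u\|_{L^8(I_j,L^4)}$, the bootstrap closes exactly as you describe.
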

	
	\subsection{Variational analysis}
	In this subsection, we recall some basic properties of ground states related to \eqref{ell-equ}.
	
	Let $\phi$ be a ground state related to \eqref{ell-equ}. Since it is an minimizer for the Weinstein functional \eqref{weins-func}, we have
	\[
	C_{\opt} = \frac{-N(\phi)}{ (H(\phi))^{\frac{3}{2}} (M(\phi))^{\frac{1}{2}}},
	\]
	where $C_{\opt}$ is the optimal constant in the Gagliardo-Nirenberg-type inequality
	\begin{align} \label{GN-ineq}
	-N(f) \leq C_{\opt} (H(f))^{\frac{3}{2}} (M(f))^{\frac{1}{2}}, \quad f \in H^1(\R^3).
	\end{align}
	We also have the following Pohozaev's identities (see \cite[Lemma 2.2]{AS}):
	\begin{align} \label{poho-iden}
	H(\phi) = 6 M(\phi)=-\frac{3}{2} N(\phi).
	\end{align}
	We infer that
	\[
	E(\phi) = \frac{1}{6}H(\phi) = -\frac{1}{4}N(\phi)
	\]
	and
	\begin{align} \label{opti-cons}
	C_{\opt} = \frac{2}{3} \left(H(\phi)M(\phi)\right)^{-\frac{1}{2}}.
	\end{align}
	This shows that
	\begin{align} \label{inde-quant-proof}
	E(\phi) M(\phi) = \frac{1}{6} H(\phi) M(\phi) = -\frac{1}{4} N(\phi) M(\phi) = \frac{2}{27} (C_{\opt})^{-2}.
	\end{align}
	In particular, we see that the quantities $E(\phi) M(\phi), H(\phi) M(\phi)$, and $N(\phi)M(\phi)$ are independent of $\phi$.

As a consequence of the variational analysis above, we give the following coercivity property. 	
	
	\begin{lemma} \label{lem-bound-below}
		Let $\lambda_1$ and $\lambda_2$ satisfy \eqref{uns-reg}. Let $\phi$ be a ground state related to \eqref{ell-equ}, and $f \in H^1$ satisfy
		\[
		-N(f) M(f) \leq A <- N(\phi) M(\phi)
		\]
		for some constant $A>0$. Then there exists $\nu=\nu(A,\phi)>0$ such that
		\begin{align} 
		G(f) &\geq \nu H(f), \label{bound-G} \\
		E(f) &\geq \frac\nu2H(f).\label{bound-E}
		\end{align}
	\end{lemma}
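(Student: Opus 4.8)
\emph{Proof proposal.} The plan is to reduce both inequalities to a single pointwise bound of the form $-N(f)\leq \beta\,H(f)$ for a constant $\beta=\beta(A,\phi)\in[0,\tfrac23)$, and then to read off the two conclusions directly from the algebraic definitions $G(f)=H(f)+\tfrac32 N(f)$ and $E(f)=\tfrac12\big(H(f)+N(f)\big)$. Granting $-N(f)\leq\beta H(f)$, we immediately get
\[
G(f)=H(f)-\tfrac32\big(-N(f)\big)\geq\Big(1-\tfrac32\beta\Big)H(f),\qquad E(f)=\tfrac12 H(f)-\tfrac12\big(-N(f)\big)\geq\tfrac12(1-\beta)H(f).
\]
Setting $\nu:=1-\tfrac32\beta>0$ (positive precisely because $\beta<\tfrac23$), and noting $\tfrac12(1-\beta)\geq\tfrac{\nu}{2}$, this yields \eqref{bound-G} and \eqref{bound-E} with one and the same $\nu$.

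To produce the bound $-N(f)\leq\beta H(f)$ I would distinguish the sign of $N(f)$. If $N(f)\geq 0$ there is nothing to prove, since then $-N(f)\leq 0\leq\beta H(f)$. The interesting case is $N(f)<0$, and here the key observation is that a direct application of the Gagliardo--Nirenberg inequality \eqref{GN-ineq} alone does \emph{not} suffice: it only controls $-N(f)/H(f)$ by $C_{\opt}\big(H(f)M(f)\big)^{1/2}$, and the scaling-invariant quantity $H(f)M(f)$ is not bounded under the hypothesis (it can be made arbitrarily large, e.g.\ by high-frequency modulation, while $-N(f)M(f)$ stays fixed). The remedy is to combine \eqref{GN-ineq} with the hypothesis itself: dividing $-N(f)M(f)\leq A$ by $H(f)M(f)$ gives the complementary bound $-N(f)/H(f)\leq A/\big(H(f)M(f)\big)$. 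Writing $t:=\big(H(f)M(f)\big)^{1/2}$, we thus have
\[
\frac{-N(f)}{H(f)}\leq\min\left\{\,C_{\opt}\,t,\ \frac{A}{t^2}\,\right\},
\]
and the two competing bounds trade off so that their minimum is controlled independently of $t$: using the elementary inequality $\min\{a,b\}\leq a^{2/3}b^{1/3}$ with $a=C_{\opt}t$ and $b=A/t^2$, the powers of $t$ cancel and we obtain $-N(f)/H(f)\leq(C_{\opt}^2 A)^{1/3}=:\beta$.

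It remains to check that this $\beta$ lies below $\tfrac23$, which is exactly where the strict inequality $A<-N(\phi)M(\phi)$ enters. By the identity \eqref{inde-quant-proof} (a consequence of the Pohozaev relations \eqref{poho-iden}) one has $-N(\phi)M(\phi)=\tfrac{8}{27}C_{\opt}^{-2}$, so $A<-N(\phi)M(\phi)$ is equivalent to $C_{\opt}^2 A<\tfrac{8}{27}$, whence $\beta=(C_{\opt}^2 A)^{1/3}<\tfrac23$. This closes the argument with $\nu=1-\tfrac32(C_{\opt}^2 A)^{1/3}>0$, which depends only on $A$ and, through $C_{\opt}$, on $\phi$. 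I expect the only genuinely delicate point to be the one flagged above: recognizing that the hypothesis controls the potential quantity $-N\cdot M$ rather than the kinetic one $H\cdot M$, so that coercivity cannot come from Gagliardo--Nirenberg in isolation and must instead be extracted from an interpolation between the variational bound and the hypothesis; everything else is elementary.
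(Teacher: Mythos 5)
Your proof is correct and is essentially the paper's argument in different packaging: your interpolation $\min\{C_{\opt}t,\,A/t^{2}\}\leq (C_{\opt}t)^{2/3}(A/t^{2})^{1/3}=(C_{\opt}^{2}A)^{1/3}$ is algebraically the same step as the paper's multiplication of \eqref{GN-ineq} by $(-N(f))^{1/2}$ followed by the hypothesis $-N(f)M(f)\leq A$, and writing $A=-(1-\eta)N(\phi)M(\phi)$ shows your $\beta=(C_{\opt}^{2}A)^{1/3}=\tfrac23(1-\eta)^{1/3}$ reproduces exactly the paper's bound $-N(f)\leq\tfrac23(1-\eta)^{1/3}H(f)$ and the same constant $\nu=1-(1-\eta)^{1/3}$. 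The case split on the sign of $N(f)$ and the derivation of \eqref{bound-E} from \eqref{bound-G} likewise match the paper's proof.
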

	
	\begin{proof}
		If $N(f) \geq 0$, then it is obvious, from \eqref{def:G} and the definition of the energy that $G(f) \geq H(f)$ and $E(f) \geq \frac{1}{2} H(f)$. If $N(f)<0$, then we write
		\[
		A = -(1-\eta) N(\phi) M(\phi)
		\]
		for some $\eta=\rho(A,\phi) \in (0,1)$, and we have
		\[
		-N(f)M(f) \leq -(1-\eta)N(\phi)M(\phi).
		\]
		Using \eqref{GN-ineq}, \eqref{opti-cons} and \eqref{inde-quant-proof}, we see that
		\begin{align*}
		(-N(f))^{\frac{3}{2}} &\leq C_{\opt}  (-N(f) M(f))^{\frac{1}{2}} (H(f))^{\frac{3}{2}} \\
		&= \left(\frac{-N(f)M(f)}{-N(\phi) M(\phi)}\right)^{\frac{1}{2}} \left(\frac{2}{3}H(f) \right)^{\frac{3}{2}}\\
		&\leq (1-\eta)^{\frac{1}{2}} \left(\frac{2}{3}H(f) \right)^{\frac{3}{2}},
		\end{align*}
		which implies 
		\[
		\frac{2}{3} (1-\eta)^{\frac{1}{3}} H(f) \geq -N(f).
		\]
		It follows that
		\[
		G(f) = H(f) +\frac{3}{2}N(f) \geq \left(1- (1-\eta)^{\frac{1}{3}}\right) H(f)
		\]
		which proves \eqref{bound-G}. The estimate \eqref{bound-E} follows from \eqref{bound-G} and 
		\[
		E(f)=\frac{1}{2} G(f) -\frac{1}{4} N(f) \geq \frac{1}{2} G(f).
		\]
		The proof is complete.
	\end{proof}

	\section{Dynamics above the threshold}\label{S3}

	This section is devoted to the proof of the scattering criterion given in Theorem \ref{theo-scat-crite} and its consequences.
	
\subsection{Proof of the scattering criterion}	
Let $u: [0,T^*) \times \R^3 \rightarrow \C$ be a $H^1$-solution to \eqref{dip-NLS} satisfying \eqref{scat-crite}. By the conservation of energy and \eqref{scat-crite}, we see that $\sup_{t\in [0,T^*)} H(u(t)) \leq C(E, \phi) <\infty$. Hence by the blow-up alternative we have $T^*=\infty$. \\
	
	Let $A$ and $\delta$ be two positive real numbers. For $u(t)$ satisfying 
 	\begin{align} \label{cond-A-delta}
	\sup_{t\in [0,\infty)} -N(u(t)) M(u(t)) \leq A, \quad E(u)M(u) \leq \delta.
	\end{align}
we define 
	\begin{align} \label{S-A-delta}
	S(A,\delta):= \sup \left\{ \|u\|_{L^8([0,\infty),L^4)} \ : \ u(t) \text{ is a } H^1 \text{ solution to } \eqref{dip-NLS} \text{ satisfying } \eqref{cond-A-delta}\right\}.
	\end{align}
	\\
		We see that Theorem $\ref{theo-scat-crite}$ is reduced to show the following result.
	\begin{proposition} \label{prop-scat-equi}
		Let $\lambda_1$ and $\lambda_2$ satisfy \eqref{cond-GW}. Let $\phi$ be a ground state related to \eqref{ell-equ}. If $A<-N(\phi)M(\phi)$, then for all  $\delta >0$, $S(A,\delta)<\infty$.
	\end{proposition}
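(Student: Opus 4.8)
The plan is to run a concentration--compactness and rigidity induction on the mass--energy product, treating $A$ as a fixed subcritical level for the potential energy. First I would record the base case: by the coercivity of Lemma~\ref{lem-bound-below}, the constraint \eqref{cond-A-delta} with $A < -N(\phi)M(\phi)$ forces $H(u(t)) M(u(t)) \leq 2\delta/\nu$, so when $\delta$ is small the mass-rescaled $\dot H^1$-size of $u_0$ is small and Lemma~\ref{lem-small-data} gives $S(A,\delta) < \infty$. I would then set
\[
\delta_c := \sup\{\delta > 0 \ : \ S(A,\delta) < \infty\},
\]
so that the Proposition is equivalent to $\delta_c = \infty$, and argue by contradiction assuming $\delta_c < \infty$.

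Assuming $\delta_c<\infty$, the second step is to produce a critical element. I would take solutions $u_n$ obeying \eqref{cond-A-delta} with $E(u_n)M(u_n) \nearrow \delta_c$ and $\|u_n\|_{L^8L^4} \to \infty$, apply the linear profile decomposition to the mass-normalized data, and propagate each profile by the \emph{nonlinear} flow. The structural input is a Pythagorean expansion of the mass--energy along these bounded nonlinear flows (Lemma~\ref{lem-pytha-along-inls}), together with a matching decomposition of $-N(\cdot)M(\cdot)$, ensuring that every nonlinear profile again lies in the admissible class $-NM \leq A$ and carries mass--energy at most $\delta_c$, with the full value $\delta_c$ attainable by at most one profile. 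If two or more profiles were nontrivial, each would sit strictly below $\delta_c$, hence scatter with finite norm by the induction hypothesis, and the stability Lemma~\ref{lem-stability} would bound $\|u_n\|_{L^8L^4}$ uniformly --- a contradiction. Thus a single profile survives, yielding a critical solution $u_c$ with $E(u_c)M(u_c)=\delta_c$, $\sup_t -N(u_c(t))M(u_c(t)) \leq A$, and $\|u_c\|_{L^8L^4}=\infty$, whose orbit $\{u_c(t,\cdot-y(t))\}_{t\geq 0}$ is precompact in $H^1$ for a suitable translation path $y(t)$.

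The third step is the rigidity argument via a localized virial identity. Condition \eqref{cond-GW} guarantees $N(u_c(t))<0$ for every $t$, so Lemma~\ref{lem-bound-below} gives $G(u_c(t)) \geq \nu H(u_c(t))$; since precompactness and non-triviality keep $H(u_c(t))$ bounded below, we obtain $G(u_c(t)) \geq c_0>0$ uniformly in $t$. Introducing the truncated virial $M_R(t) = 2\,\ima \int \nabla\varphi_R \cdot \nabla u_c\, \overline{u_c}\, dx$ with $\varphi_R(x)=R^2\varphi(x/R)$ agreeing with $|x|^2$ on $B_R$, the precompactness makes all error terms --- including the contribution of the non-local convolution, which must be handled with the mapping properties of $K$ and the uniform localization of mass --- uniformly small, so that $\tfrac{d}{dt}M_R(t) = G(u_c(t)) + o_R(1) \geq \tfrac{c_0}{2}$ for $R$ large. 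Integrating over $[0,T]$ against the uniform bound $|M_R(t)| \lesssim R$ forces $c_0 T \lesssim R$, which is absurd as $T\to\infty$. Hence $u_c\equiv 0$, contradicting $\|u_c\|_{L^8L^4}=\infty$, so $\delta_c=\infty$.

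The hard part will be the construction of the critical element. Exactly as flagged after Theorem~\ref{theo-scat-crite}, the mass--energy of a profile need not lie below the ground-state level, so the wave-operator construction of Holmer--Roudenko is unavailable and one cannot assert a priori that the nonlinear profile attached to a given linear profile scatters. My resolution is to decompose the nonlinear evolution of $u_n$ itself into nonlinear profiles via the Pythagorean expansion of Lemma~\ref{lem-pytha-along-inls}, so that scattering norms and mass--energies add correctly and subcriticality of all but one profile can be exploited through Lemma~\ref{lem-stability}. The most delicate point is checking that the potential-energy constraint $-NM \leq A$ is inherited by each profile --- this is what keeps the coercivity of Lemma~\ref{lem-bound-below} in force for $u_c$ --- and it is precisely here that the negativity of the potential energy ensured by \eqref{cond-GW} is used.
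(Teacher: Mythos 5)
Your proposal is correct and follows essentially the same scheme as the paper's proof: small-data base case via the coercivity of Lemma~\ref{lem-bound-below} and Lemma~\ref{lem-small-data}; construction of a critical element through the linear profile decomposition combined with the Pythagorean expansion along bounded nonlinear flows (Lemma~\ref{lem-pytha-along-inls}), precisely to replace the unavailable Holmer--Roudenko wave-operator argument and to propagate the constraint $-N(\cdot)M(\cdot)\leq A$ to each profile, where the negativity of $N$ from \eqref{cond-GW} is used exactly as in Remark~\ref{rem-cond-GW}; and rigidity via precompactness of the translated orbit and a localized virial estimate with Lemma~\ref{lem-bound-below}. Your definition of $\delta_c$ as a supremum agrees with the paper's infimum by monotonicity of $S(A,\cdot)$ in $\delta$, so the two formulations are interchangeable.
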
  
\begin{remark} We remark that the energy $E(u)$ is non-negative due to Lemma \ref{lem-bound-below}.
\end{remark}	
	
	\noindent {\it Proof of Proposition \ref{prop-scat-equi}.}
	The proof of Proposition $\ref{prop-scat-equi}$ is done by several steps. In what follows, the quantity $A$ is fixed, and satisfies the boundedness assumption as in the statement of the Proposition.\\
	
	\noindent \textit{Step 1. Small data theory.} By  interpolation and Lemma \ref{lem-bound-below} we have
	\[
	\|u_0\|^4_{\dot{H}^{\frac{1}{2}}} \leq H(u_0)M(u_0) \leq \frac{2E(u_0)}{\nu}M(u_0) \leq \frac{2\delta}{\nu}.
	\]
	By taking $\delta>0$ sufficiently small, Strichartz estimates imply $\|L(t) u_0\|_{L^8(\R,L^4)} \ll 1$ which, by Lemma \ref{lem-small-data}, implies $S(A,\delta)<\infty$. \\
	
	\noindent  \textit{Step 2. Existence of a critical solution.} Assume by contradiction that $S(A,\delta) =\infty$ for some $\delta >0$. By Step 1, the quantity
	\begin{align} \label{defi-delta-c}
	\deltc=\delta_c(A):= \inf \left\{ \delta>0 \ : \ S(A,\delta) =\infty\right\}
	\end{align}
	is well-defined and positive. We infer from the definition of $\deltc$ that:
	\begin{itemize}
		\item[$\mathcal A)\label{case-A:page}$]\label{item-A} If $u(t)$ is a $H^1$-solution to \eqref{dip-NLS} which satisfies  
		\[
		\sup_{t\in [0,\infty)} -N(u(t))M(u(t)) \leq A, \quad E(u) M(u) <\deltc,
		\]
		then $\|u\|_{L^8([0,\infty),L^4)} <\infty$ and the solution scatters forward in time.
		\item[$\mathcal B)$]\label{item-B} There exists a sequence of $H^1$-solutions $\{u_n(t)\}_{n\geq1}$ to \eqref{dip-NLS} with initial data $\{u_{0,n}\}_{n\geq1}$ such that
		\begin{align} \label{assump-un}
		\begin{aligned}
		\sup_{t\in [0,\infty)} -N(u_n(t))M(u_n(t)) &\leq A \text{ for all } n, \\
		E(u_n) M(u_n) &\searrow \deltc \text{ as } n\rightarrow \infty, \\
		\|u_n\|_{L^8([0,\infty),L^4)} &=\infty \text{ for all } n.
		\end{aligned}
		\end{align}
	\end{itemize}
	We will prove that there exists a $H^1$-solution $u_{\csb}(t)$ to \eqref{dip-NLS}  
	such that
	\begin{align}
	\begin{aligned} \label{prop-uc}
	M(u_{\csb}) &=1, \\
	\sup_{t\in [0,\infty)} -N(u_{\csb}(t)) &\leq A, \\
	E(u_{\csb}) &= \deltc, \\
	\|u_{\csb}\|_{L^8([0,\infty),L^4)} &=\infty.
	\end{aligned}
	\end{align}
	To this aim, we consider the sequence of initial data $\{u_{0,n}\}_{n\geq1}$. Using the scaling \eqref{scaling}, we may assume that $M(u_{0,n})=1$ for all $n$. Note that this scaling does not affect $E(u_n)M(u_n)$ and $\sup_{t\in [0,\infty)} -N(u_n(t))M(u_n(t))$. After this scaling, we have
	\begin{align} \label{prop-un-0}
	\begin{aligned}
	M(u_{0,n}) &=1 \text{ for all } n, \\
	\sup_{t\in [0,\infty)} -N(u_n(t)) &\leq A \text{ for all } n, \\
	E(u_{0,n}) &\searrow \deltc \text{ as } n\rightarrow \infty, \\
	\|u_n\|_{L^8([0,\infty),L^4)} &=\infty \text{ for all } n.
	\end{aligned}
	\end{align}
	Applying the profile decomposition (see e.g. \cite{DHR}) to the sequence $\{u_{0,n}\}_{n\geq 1}$ (which is now uniformly bounded in $H^1$), we have that for each integer $J\geq 1$, there exists a subsequence, still denoted by $\{u_{0,n}\}_{n\geq 1}$, and for each $1\leq j \leq J$, there exist a  profile $\psi^j \in H^1$, a sequence of time shifts $\{t^j_n\}_{n\geq 1} \subset \R$,  a sequence of space shifts $\{x^j_n\}_{n\geq 1} \subset \R^3$, and a sequence of remainders $\{W^J_n\}_{n\geq 1} \subset H^1$ such that
	\begin{align} \label{prof-decom-un-0}
	u_{0,n}(x) = \sum_{j=1}^J L(-t^j_n) \psi^j(x-x^j_n) + W^J_n(x).
	\end{align}
	The time and space shifts have the pairwise divergence property below 
	\begin{align} \label{time-space-diver}
	\lim_{n\rightarrow \infty} |t^j_n-t^k_n| + |x^j_n - x^k_n| = \infty, \quad \hbox{for any}\quad 1\leq j \ne k \leq J.
	\end{align}
	The remainder has the following asymptotic smallness property
	\begin{align} \label{remainder}
	\lim_{J \rightarrow \infty} \left[ \lim_{n\rightarrow \infty} \|L(t) W^J_n\|_{L^8(\R, L^4)\cap L^4(\R, L^6)} \right] =0.
	\end{align}
	Note that $(8,4)$ and $(4,6)$ are both $\dot H^{1/2}$-admissible Strichartz pairs.	
	Moreover, for fixed $J$, we have the asymptotic Pythagorean expansions (see e.g. \cite{BF})
	\begin{align} 
	\|u_{0,n}\|^2_{\dot{H}^\gamma} &= \sum_{j=1}^J \|\psi^j\|^2_{\dot{H}^\gamma} + \|W^J_n\|^2_{\dot{H}^\gamma} + o_n(1), \quad \forall \gamma \in [0,1], \label{H-gam-expan}\\
	-N(u_{0,n}) &= -\sum_{j=1}^J N(L(-t^j_n) \psi^j)  - N(W^J_n) + o_n(1), \label{N-expan} \\
	E(u_{0,n}) &= \sum_{j=1}^J E(L(-t^j_n) \psi^j) + E(W^J_n) + o_n(1). \label{ener-expan}
	\end{align}
	Here we note that the functionals $H, N$ (see \eqref{defi-H} and \eqref{defi-N}), and $E$ are invariant under the spatial translation.	In addition, we may assume that either $t^j_n\equiv 0$ or $t^j_n \rightarrow \pm \infty$, and either $x^j_n\equiv 0$ or $|x^j_n| \rightarrow \infty$.
	
	Next, we define the nonlinear profile $v^j$ associated to $\psi^j$ and $t^j_n$ as follows:
	\begin{itemize}
		\item If $t^j_n \rightarrow -\infty$, then $v^j$ is the maximal lifespan solution to \eqref{dip-NLS} that scatters forward in time to $L(t)\psi^j$, i.e. $\|v^j(t) -L(t) \psi^j\|_{H^1} \rightarrow 0$ as $t \rightarrow \infty$. In particular, we have $\|v^j(-t^j_n) -L(-t^j_n) \psi^j\|_{H^1} \rightarrow 0$ as $n\rightarrow \infty$ and $\|v^j(t)\|_{L^8((T_0,\infty),L^4)}<\infty$, for some $T_0>0.$
		\item If $t^j_n \rightarrow \infty$, then $v^j$ is the maximal lifespan solution to \eqref{dip-NLS} that scatters backward in time to $L(t) \psi^j$. In particular, we have $\|v^j(-t^j_n) -L(-t^j_n) \psi^j\|_{H^1} \rightarrow 0$ as $n\rightarrow \infty$ and $\|v^j(t)\|_{L^8((-\infty,-T_0),L^4)}<\infty$, for some $T_0>0.$
		\item If $t^j_n \equiv 0$, then $v^j$ is the maximal lifespan solution to \eqref{dip-NLS} with data $v^j(0)=\psi^j$. 
	\end{itemize}
	For each $j, n\geq 1$, we introduce $v^j_n(t):= v^j(t-t^j_n)$. By the definition, we have
	\begin{align} \label{est-v-jn}
	\|v^j_n(0)- L(-t^j_n) \psi^j\|_{H^1} \rightarrow 0 \quad \text{as } n\rightarrow \infty.
	\end{align}
	We thus can rewrite \eqref{prof-decom-un-0} as
	\begin{align} \label{non-prof-decom-un-0}
	u_{0,n}(x) = \sum_{j=1}^J v^j_n(0,x-x^j_n) + \tilde{W}^J_n(x),
	\end{align}
	where 
	\[
	\tilde{W}^J_n(x) = \sum_{j=1}^J L(-t^j_n) \psi^j(x-x^j_n) - v^j_n(0,x-x^j_n) + W^J_n(x). 
	\]
	By Strichartz estimates, we have
	\begin{align*}
	\|L(t) \tilde{W}^J_n\|_{L^8(\R,L^4)\cap L^4(\R,L^6)} \lesssim \sum_{j=1}^J \|L(-t^j_n) \psi^j - v^j_n(0)\|_{H^1} + \|L(t) W^J_n\|_{L^8(\R,L^4)\cap L^4(\R,L^6)} 
	\end{align*}
	which, by \eqref{remainder} and \eqref{est-v-jn}, implies
	\begin{align} \label{remainder-W}
	\lim_{J\rightarrow \infty} \lim_{n\rightarrow \infty} \|L(t) \tilde{W}^J_n\|_{L^8(\R,L^4)\cap L^4(\R,L^6)} =0.
	\end{align}
	Similarly, we infer from \eqref{ener-expan} and \eqref{est-v-jn} that
	\begin{align} \label{ener-expan-vj}
	E(u_{0,n}) &= \sum_{j=1}^J E(v^j_n(0)) + E(\tilde{W}^J_n) + o_n(1).
	\end{align}
	We need to prove the following Pythagorean expansions along the bounded NLS flow (see e.g. \cite[Lemma 3.9]{Guevara} for a similar result in the context of classical NLS). In what follows  $\nls(t)f$ denotes the solution to \eqref{dip-NLS} with initial data $f$.
	\begin{lemma}\label{lem-pytha-along-inls}
		Let $T\in (0,\infty)$ be a fixed time. Assume that for all $n \geq 1$, $u_n(t) \equiv \nls(t)u_{0,n}$ exists up to time $T$ and
		\begin{align} \label{est-H-un-T}
		\lim_{n\rightarrow \infty} \sup_{t\in [0,T]} H(u_n(t)) <\infty.
		\end{align}
			Consider the profile decomposition \eqref{non-prof-decom-un-0}. Denote $\tilde{W}^J_n(t)= \nls(t)\tilde{W}^J_n$. Then for all $t\in [0,T]$,
		\begin{align} \label{expan-H-un-t}
		H(u_n(t)) = \sum_{j=1}^J H(v^j_n(t)) + H(\tilde{W}^J_n(t)) + o_{J,n}(1),
		\end{align}
		where $o_{J,n}(1)$ satisfies $\lim_{J\to\infty} \limsup_{n\to\infty} o_{J,n}(1)=0,$ 
uniformly on $0\leq t\leq T$.  In particular, by the conservation of energy and \eqref{ener-expan-vj}, we have for all $1\leq j \leq J$ and all $t\in [0,T]$,
		\begin{align} \label{expan-N-un-t}
		-N(u_n(t)) = -\sum_{j=1}^J N(v^j_n(t)) -N(\tilde{W}^J_n(t)) + o_{J,n}(1).
		\end{align}
	\end{lemma}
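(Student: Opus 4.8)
The plan is to prove the kinetic-energy expansion \eqref{expan-H-un-t} and then obtain \eqref{expan-N-un-t} almost for free. Writing $E=\tfrac12(H+N)$, conservation of energy along the $\nls$-flow applied to $u_n$, to each nonlinear profile $v^j_n$, and to the remainder $\tilde{W}^J_n$ gives $E(u_n(t))=E(u_{0,n})$, $E(v^j_n(t))=E(v^j_n(0))$ and $E(\tilde{W}^J_n(t))=E(\tilde{W}^J_n)$ for all $t\in[0,T]$. Hence the $t=0$ energy expansion \eqref{ener-expan-vj} propagates to every $t$, and combining it with \eqref{expan-H-un-t} and the identity $N=2E-H$ produces \eqref{expan-N-un-t}. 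So the entire content of the lemma is the $\dot H^1$ expansion, which I would attack by the classical route: build the nonlinear superposition of the profiles, show by perturbation theory that it approximates $u_n$ in $L^\infty_t H^1$ on $[0,T]$, and then exploit the asymptotic orthogonality of the profiles.

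Concretely, set
\[
\tilde{u}_n(t,x):=\sum_{j=1}^{J}v^j_n(t,x-x^j_n)+\tilde{W}^J_n(t,x),
\]
so that $\tilde{u}_n(0)=u_{0,n}$ by \eqref{non-prof-decom-un-0}. The case $t=0$ of \eqref{expan-H-un-t} is immediate from \eqref{H-gam-expan} with $\gamma=1$ and \eqref{est-v-jn}. Taking $\gamma\in\{0,1\}$ in \eqref{H-gam-expan} also shows that $\sum_j M(\psi^j)$ and $\sum_j H(\psi^j)$ converge, so only finitely many profiles carry non-negligible $H^1$-norm, the remaining ones having arbitrarily small $H^1$-norm once the cut-off index $J_1$ is large, uniformly in $n$. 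These small profiles, together with the remainder $\tilde{W}^J_n(t)$ for $J$ large, then exist globally with small Strichartz norms by \eqref{remainder-W} and the small-data Lemma \ref{lem-small-data}.

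The main obstacle is that, above the mass-energy threshold, I cannot use a wave-operator construction to force the finitely many large profiles to exist on all of $[0,T]$ with bounded norms, as is possible below threshold in \cite{HR}; the only leverage is the hypothesis $\sup_{[0,T]}H(u_n)\le C_0$ from \eqref{est-H-un-T}. I would convert this into control of $\tilde{u}_n$ by a continuity/bootstrap argument: let $T_n\le T$ be maximal such that all profiles exist on $[0,T_n]$ with $\sup_{[0,T_n]}H(\tilde{u}_n)\le 2C_0$. On a finite interval $\|\tilde{u}_n\|_{L^8([0,T_n],L^4)}\lesssim T^{1/8}\sup_{[0,T_n]}\|\tilde{u}_n\|_{H^1}$ by Sobolev embedding, so the stability Lemma \ref{lem-stability} applies with $\tilde{u}_n$ as approximate solution, provided its error $e_n:=(i\partial_t+\tfrac12\Delta)\tilde{u}_n-F(\tilde{u}_n)$ is small in the dual Strichartz norms of that lemma. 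Granting this, the perturbation estimate gives $\sup_{[0,T_n]}H(u_n-\tilde{u}_n)=o_{J,n}(1)$, which with $\sup_{[0,T_n]}H(u_n)\le C_0$ improves the bootstrap bound to $C_0+o_{J,n}(1)<2C_0$ and forces $T_n=T$. The argument is self-referential by design — the perturbation estimate both needs and produces the bound on $\tilde{u}_n$ — so one must keep the order of limits ($n\to\infty$ first, then $J\to\infty$) straight throughout. To see that $e_n$ is negligible, I would expand $F(\tilde{u}_n)-\sum_j F(v^j_n(\cdot-x^j_n))-F(\tilde{W}^J_n)$ into cross terms containing products of distinct profiles or of a profile with the remainder; for the local cubic part these vanish as $n\to\infty$ (for fixed $J$) by the divergence property \eqref{time-space-diver}. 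The genuinely new contributions come from the nonlocal term $(K\ast|u|^2)u$, where I would combine the $L^p$-boundedness of $f\mapsto K\ast f$ from \cite[Lemma 2.1]{CMS} with the nonlinear estimates of Lemma \ref{lem-non-est} to handle mixed terms such as $(K\ast(v^j_n\overline{v^k_n}))v^l_n$, again sending them to zero via \eqref{time-space-diver}. A final Strichartz estimate on the Duhamel difference upgrades the $L^8L^4\cap L^\infty L^3$ closeness from Lemma \ref{lem-stability} to $\sup_{[0,T]}\|u_n-\tilde{u}_n\|_{H^1}=o_{J,n}(1)$.

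With $H(u_n(t))=H(\tilde{u}_n(t))+o_{J,n}(1)$ established, I would expand $H(\tilde{u}_n(t))=\|\nabla\tilde{u}_n(t)\|_{L^2}^2$. Translation invariance gives $\|\nabla v^j_n(t,\cdot-x^j_n)\|_{L^2}=\|\nabla v^j_n(t)\|_{L^2}$, so it remains to show that the cross terms $\langle\nabla v^j_n(t,\cdot-x^j_n),\nabla v^k_n(t,\cdot-x^k_n)\rangle$ for $j\ne k$ and $\langle\nabla v^j_n(t,\cdot-x^j_n),\nabla\tilde{W}^J_n(t)\rangle$ vanish in the double limit, uniformly on $[0,T]$. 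These are exactly the $\dot H^1$-orthogonality relations of the profiles, which follow from \eqref{time-space-diver} and \eqref{remainder-W} in the same way as for the mass and energy expansions. This yields \eqref{expan-H-un-t}, and through the reduction of the first paragraph, \eqref{expan-N-un-t}.
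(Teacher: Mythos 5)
Your overall architecture coincides with the paper's own proof: isolate the finitely many large profiles via \eqref{H-gam-expan}, dispose of the small ones by small-data theory and of the remainder by \eqref{remainder-W}, form the superposition $\sum_j v^j_n(t,\cdot-x^j_n)+\tilde{W}^J_n(t)$ with the nonlinearly evolved remainder, control it by the stability Lemma \ref{lem-stability}, and close a continuity/bootstrap in time powered solely by \eqref{est-H-un-T}. (The paper runs the bootstrap on the individual profiles with $t^j_n\equiv 0$, capping $H(v^j)$ by $2B$ up to a maximal time $\tilde{T}$ and contradicting maximality through the expansion itself; your version on $H(\tilde{u}_n)$ is equivalent, but note that to continue past $T_n$ you must first decouple the bound on the sum into bounds on each fixed profile $v^j$ — i.e.\ the orthogonality expansion is needed inside the bootstrap, exactly as in the paper.) The reduction in your first paragraph, $N=2E-H$ together with \eqref{ener-expan-vj} and conservation of energy for $u_n$, $v^j_n$ and $\tilde{W}^J_n(t)$, is also the paper's. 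The genuine divergence is the endgame, and that is where there is a gap.

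You derive \eqref{expan-H-un-t} from (a) $\sup_{[0,T]}\|u_n-\tilde{u}_n\|_{H^1}=o_{J,n}(1)$ and (b) vanishing of the gradient cross terms, asserting that (b) follows ``in the same way as for the mass and energy expansions'' from \eqref{time-space-diver} and \eqref{remainder-W}. For the profile--remainder pairing $\langle \nabla v^j_n(t,\cdot-x^j_n),\nabla \tilde{W}^J_n(t)\rangle$ this is unsupported: \eqref{remainder-W} and its nonlinear counterpart \eqref{lim-J-tilde-W-J-n} give only spacetime $L^8L^4\cap L^4L^6$ smallness, while $\|\nabla\tilde{W}^J_n(t)\|_{L^2}$ remains of order one; the weak vanishing that kills this pairing at $t=0$ is built into the linear profile decomposition and there is no mechanism propagating it, uniformly on $[0,T]$, through the nonlinear flow (nor even through the linear part $\nabla L(t)\tilde{W}^J_n$, since Strichartz smallness does not yield uniform-in-$t$ weak convergence at the $\dot H^1$ level). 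A milder form of the same issue affects pairings with the time-diverging profiles, where the $L^r$-smallness \eqref{est-vj-Lr}, $2<r\leq 6$, says nothing about gradients. In addition, (a) is not what Lemma \ref{lem-stability} provides — it yields closeness only in $L^8L^4\cap L^\infty L^3$ — so your one-sentence ``Strichartz upgrade'' to $L^\infty H^1$ is a separate derivative-level long-time perturbation argument that must actually be written out. Both difficulties evaporate if you reverse the algebra of your own first paragraph, which is precisely what the paper does: it proves the decoupling for the potential energy $-N(\tilde{u}^J_n(t))$, a purely Lebesgue-norm quantity for which \eqref{est-vj-Lr}, \eqref{lim-J-tilde-W-J-n} and the divergence \eqref{time-space-diver} of the translates supply uniform-in-$t$ smallness of all cross terms, and then recovers \eqref{expan-H-un-t} from $H=2E-N$, energy conservation and \eqref{ener-expan-vj}, with \eqref{expan-N-un-t} as stated. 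I recommend you restructure the final step accordingly rather than attempt the direct $\dot H^1$ cross-term analysis.
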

	
	\begin{proof}[Proof of Lemma \ref{lem-pytha-along-inls}]
		By the Pythagorean expansion \eqref{H-gam-expan}, there exists $J_0$ large enough such that $\|\psi^j\|_{H^1}$ are sufficiently small for all $j \geq J_0+1$. This, together with \eqref{est-v-jn}, implies that $\|v^j_n(0)\|_{H^1}$ are small for all $j \geq J_0+1$. By the small data theory, $v^j_n$ exists globally in time and scatters in $H^1$ in both directions for all $j \geq J_0+1$. 
		
	\noindent 	Next, we reorder the first $J_0$ profiles and let $0 \leq J_2 \leq J_0$ such that 
		\begin{itemize}
			\item For any $1\leq j \leq J_2$, the time shifts $t^j_n \equiv 0$ for any $n$. Here if $J_2=0$, then it means that there is no $j$'s in this case.
			\item For any $J_2 +1 \leq j \leq J_0$, the time shifts $|t^j_n|\rightarrow \infty$ as $n\rightarrow \infty$. Note that if $J_2=J_0$, then there is no $j$'s in this case.
		\end{itemize}
		Fix $T\in (0,\infty)$ and assume that $u_n(t) := \nls(t) u_{0,n}$ exist up to time $T$ and satisfy \eqref{est-H-un-T}. We observe that for $J_2+1 \leq j \leq J_0$, 
		\begin{align} \label{obser-vj-tn}
		\|v^j_n\|_{L^8([0,T],L^4)} \rightarrow 0
		\end{align} 
		as $n\rightarrow \infty$. Indeed, if $t^j_n\rightarrow \infty$, then as $\|v^j(t)\|_{L^8((-\infty,-T_0), L^4)} <\infty$ for some $T_0>0$, we have
		\[
		\|v^j_n\|_{L^8([0,T],L^4)} = \|v^j(t)\|_{L^8([-t^j_n,T-t^j_n],L^4)} \rightarrow 0
		\]
		as $n\rightarrow \infty$.  A similar claim is valid for $t^j_n\rightarrow -\infty.$
	Moreover,  for $J_2+1 \leq j \leq J_0$ and any $2<r\leq 6$ we get 
		\begin{align} \label{est-vj-Lr}
		\|v^j_n\|_{L^\infty([0,T], L^r)} \rightarrow 0
		\end{align}
		as $n\rightarrow \infty$. Indeed we have
		\begin{align*}
		\|v^j_n\|_{L^\infty([0,T],L^r)} &\leq \|L(t-t^j_n) \psi^j\|_{L^\infty([0,T],L^r)} + \|v^j_n(t) - L(t-t^j_n) \psi^j\|_{L^\infty([0,T],L^r)} \\ 
		& \leq \|L(t-t^j_n) \psi^j\|_{L^\infty([0,T],L^r)} + C \|v^j_n(t) -L(t-t^j_n) \psi^j\|_{L^\infty([0,T],H^1)}.
		\end{align*}
		By the decay of the linear flow, the first term tends to zero as $n$ tends to infinity. For the second term, we use the Duhamel formula
		\[
		v^j_n(t) = L(t) v^j_n(0) + i \int_0^t L(t-s) F(v^j_n(s)) ds
		\]
		with $F$ as in \eqref{defi-F} and Lemma \ref{lem-non-est}, to have
		\[
\|v^j_n\|_{L^{8/3}([0,T],W^{1,4})} \lesssim \|v^j_n(0)\|_{H^1} + \|v^j_n\|^2_{L^8([0,T], L^4)} \|v^j_n\|_{L^{8/3}([0,T],W^{1,4})}
\]
which, by \eqref{obser-vj-tn}, implies $\|v^j_n\|_{L^{8/3}([0,T],W^{1,4})}$ is bounded uniformly in $n.$ Similarly we have
		\begin{align*}
		\|v^j_n(t) &- L(t-t^j_n) \psi^j\|_{L^\infty([0,T],H^1)} \\ 
		&\lesssim \|L(t) v^j_n(0) - L(t-t^j_n) \psi^j\|_{L^\infty([0,T],H^1)} + \|v^j_n\|_{L^8([0,T], L^4)}^2 \|v^j_n\|_{L^{8/3}([0,T], W^{1,4})} \\
		&\lesssim \|v^j_n(0) - L(-t^j_n)\psi^j\|_{H^1} + \|v^j_n\|_{L^8([0,T], L^4)}^2 \|v^j_n\|_{L^{8/3}([0,T], W^{1,4})}.
		\end{align*}
		Since  the last factor in the right hand side is bounded by the latter argument,  \eqref{est-v-jn} and \eqref{obser-vj-tn} imply
		\[
		\|v^j_n(t)- L(t-t^j_n) \psi^j\|_{L^\infty([0,T],H^1)} \rightarrow 0
		\]
		as $n\rightarrow \infty$, hence \eqref{est-vj-Lr} holds. 
		
		Next, we define
		\[
		B:= \max \left\{ 1, \lim_{n\rightarrow \infty} \sup_{t\in [0,T]} H(u_n(t)) \right\} <\infty.
		\]
		For each $1\leq j \leq J_2$, we denote $T^j$ the maximal forward time such that $\sup_{t\in [0,T^j]}H(v^j(t)) \leq 2B$. Define
		\[
		\tilde{T}:= 
		\left\{
		\begin{array}{ccl}
		\min_{1\leq j \leq J_2} T^j &\text{if}& J_2 \geq 1, \\
		T &\text{if}& J_2 =0.
		\end{array}
		\right.
		\]
		In what follows, we will show that for all $t\in [0,\tilde{T}]$,
		\begin{align} \label{expan-H-un-tilde}
		H(u_n(t)) = \sum_{j=1}^J H(v^j_n(t))+ H(\tilde{W}^J_n(t)) + o_{J,n}(1),
		\end{align}
where $o_{J,n}(1)$ satisfies $\lim_{J\to\infty} \limsup_{n\to\infty} o_{J,n}(1)=0,$  
uniformly on $0\leq t\leq T$. 
		This implies \eqref{expan-H-un-t} as $T\leq\tilde{T}$. Indeed, if $J_2=0$, then $T=\tilde{T}$ by  definition. Otherwise, if $J_2\geq 1$ and $\tilde{T} < T$, then by \eqref{expan-H-un-tilde} and the definition of $\tilde{T}$, there exists $1\leq j \leq J_2$ such that $T^j=\tilde{T}$ and
		\[
		\sup_{t\in[0,T^j]} H(v^j(t)) = \sup_{t\in [0,T^j]} H(v^j_n(t)) \leq \sup_{t\in[ 0,T^j]} H(u_n(t)) \leq \sup_{t\in [0,T]} H(u_n(t)) \leq B.
		\]
		Here we recall that $t^j_n\equiv 0$ for $1\leq j \leq J_2$. By continuity, it contradicts the maximality of $T^j$. 
		
		To show \eqref{expan-H-un-tilde}, we observe that for $1\leq j\leq J_2$, we have
		\begin{align*}
		\|v^j_n\|_{L^8([0,\tilde{T}],L^4)}=\|v^j\|_{L^8([0,\tilde{T}],L^4)} &\leq \tilde{T}^{1/8} \|v^j\|_{L^\infty([0,\tilde{T}],L^4)} \\
		&\leq \tilde{T}^{1/8} \|v^j\|^{1/4}_{L^\infty([0,\tilde{T}],L^2)} \|\nabla v^j\|^{3/4}_{L^\infty([0,\tilde{T}],L^2)}.
		\end{align*}
		It follows that
		\[
		\|v^j_n\|_{L^8([0,\tilde{T}],L^4)} \lesssim \tilde{T}^{1/8} \|v^j\|^{1/4}_{L^\infty([0,\tilde{T}],L^2)} B^{3/4}.
		\]
		On the other hand, by the conservation of mass and the choice of $v^j$, we have for all $t\in [0,\tilde{T}]$,
		\begin{equation}\label{l2bound}
		\|v^j(t)\|_{L^2} = \|v^j(0)\|_{L^2} = \|\psi^j\|_{L^2} \leq \|u_{0,n}\|_{L^2} \leq 1
		\end{equation}
		which implies
		\begin{align*} 
		\|v^j(t)\|_{L^8([0,\tilde{T}],L^4)} \lesssim \tilde{T}^{1/8} B^{3/4}.
		\end{align*}
		Similarly, by Sobolev embedding, we have
		\[
		\|v^j_n\|_{L^\infty([0,\tilde{T}],L^3)} \lesssim B^{1/2}.
		\]
		Thus for $1\leq j \leq J_2$  we get 
		\begin{align} \label{est-vj-tilde}
		\|v^j_n\|_{L^8([0,\tilde{T}], L^4) \cap L^\infty([0,\tilde{T}],L^3)} \leq C(\tilde{T}, B).
		\end{align}
		Next, we define the approximation 
		\[
		\tilde{u}^J_n(t,x):= \sum_{j=1}^J v^j_n(t,x-x^j_n) + \tilde{W}^J_n(t).
		\]
		We see that $\tilde{u}^J_n(0, x) = u_{0,n}(x)$ and
		\[
		i \partial_t \tilde{u}^J_n + \frac{1}{2} \Delta \tilde{u}^J_n - F(\tilde{u}^J_n) = \tilde{e}^J_n,
		\]
		where
		\[
		\tilde{e}^J_n = \sum_{j=1}^J F(v^j_n(\cdot, \cdot-x^j_n)) - F\left(\sum_{j=1}^J v^j_n (\cdot, \cdot-x^j_n) \right) + F\left( \tilde{u}^J_n - \tilde{W}^J_n(\cdot) \right) - F(\tilde{u}^J_n). 
		\]
		\begin{claim} \label{claim}
			The functions $\tilde{u}^J_n$ satisfy 
			\begin{align} \label{prop-u-Jn-1}
			\limsup_{n\rightarrow \infty} \left(\|\tilde{u}^J_n\|_{L^\infty([0,\tilde{T}],H^1)} + \|\tilde{u}^J_n\|_{L^8([0,\tilde{T}], L^4) \cap L^\infty([0,\tilde{T}],L^3)} \right) \lesssim 1
			\end{align}
			uniformly in $J$ and
			\begin{align} \label{prop-u-Jn-2}
			\lim_{J\rightarrow \infty} \lim_{n\rightarrow \infty} \|\tilde{e}^J_n\|_{L^{8/5}([0,\tilde{T}],W^{1,4/3})} + \|\tilde{e}^J_n\|_{L^{8/3}([0,\tilde{T}], L^{4/3})} =0.
			\end{align}
		\end{claim}
	
		With Claim \ref{claim} at hand, Lemma \ref{lem-stability} implies
		\[ 
		\lim_{J\rightarrow \infty} \lim_{n\rightarrow \infty} \|u_n-\tilde{u}^J_n\|_{L^8([0,\tilde{T}], L^4) \cap L^\infty([0,\tilde{T}],L^3)} =0.
		\] 
		By the H\"older's inequality, Sobolev embedding, \eqref{est-H-un-T}, and \eqref{prop-u-Jn-1}, we see that 
		\begin{align*}
		\|u_n-\tilde{u}^J_n\|_{L^\infty([0,\tilde{T}],L^4)} &\leq \|u_n-\tilde{u}^J_n\|_{L^\infty([0,\tilde{T}],L^3)}^{1/2} \|u_n-\tilde{u}^J_n\|_{L^\infty([0,\tilde{T}],L^6)}^{1/2} \\
		&\lesssim \|u_n-\tilde{u}^J_n\|_{L^\infty([0,\tilde{T}],L^3)}^{1/2}\left(\|u_n\|_{L^\infty([0,\tilde{T}], H^1)} +\|\tilde{u}^J_n\|_{L^\infty([0,\tilde{T}],H^1)}\right)^{1/2} \rightarrow 0
		\end{align*}
		as $J, n \rightarrow \infty$. Moreover, by the same argument as in \cite[Proposition 4.3]{BF} using \eqref{time-space-diver} and \eqref{est-vj-Lr}, we see that for all $t\in [0,\tilde{T}]$,
		\begin{align*}
		-N(\tilde{u}^J_n(t)) &= -\sum_{j=1}^J N(v^j_n(t,x-x^j_n)) - N(\tilde{W}^J_n(t))+ o_{J,n}(1) \\
		&= -\sum_{j=1}^J N(v^j_n(t)) - N(\tilde{W}^J_n(t))  + o_{J,n}(1).
		\end{align*}
		On the other hand, by the conservation of energy and the choice of $v^j$, we have
		\begin{align*}
		E(u_n(t)) = E(u_{0,n}) &= \sum_{j=1}^J E(v^j_n(0)) + E(\tilde{W}^J_n) + o_n(1) \\
		&= \sum_{j=1}^J E(v^j_n(t)) + E(\tilde{W}^J_n(t)) + o_n(1).
		\end{align*}
		Combining the above estimates, we prove \eqref{expan-H-un-tilde}. The proof of Lemma \ref{lem-pytha-along-inls} is complete.	
	\end{proof}

	\begin{proof}[Proof of Claim \ref{claim}]
		We first show the smallness of remainder under the time evolution of \eqref{dip-NLS}. By the Duhamel formula, Strichartz estimates, and Lemma \ref{lem-non-est}, we have 
		\begin{align} \label{est-tilde-W-J-n}
		\|\tilde{W}^J_n(t)\|_{L^8(\R,L^4)\cap L^4(\R,L^6)} \leq \|L(t) \tilde{W}^J_n\|_{L^8(\R,L^4)\cap L^4(\R,L^6)} + C \|\tilde{W}^J_n(t)\|^3_{L^8(\R,L^4)\cap L^4(\R,L^6)}
		\end{align}
		for some constant $C>0$ independent of $n$. By the standard continuity argument  together with  \eqref{remainder-W}, we get
		\begin{align} \label{lim-J-tilde-W-J-n}
		\lim_{J\rightarrow \infty} \left[\lim_{n\rightarrow \infty} \|\tilde{W}^J_n(t)\|_{L^8(\R,L^4)\cap L^4(\R,L^6)} \right] =0.
		\end{align}
				
		The boundedness of $\|\tilde{u}^J_n\|_{L^8([0,\tilde{T}],L^4) \cap L^\infty([0,\tilde{T}],L^3)}$ follows directly from \eqref{obser-vj-tn}, \eqref{est-vj-Lr}, \eqref{est-vj-tilde}, and \eqref{lim-J-tilde-W-J-n}. To see the boundedness of $\|\tilde{u}^J_n\|_{L^\infty([0,\tilde{T}], H^1)}$ uniformly in $J$, we proceed as follows. For $J_2+1 \leq j \leq J_0$, we use the Duhamel formula, Strichartz estimates, \eqref{est-vj-Lr}, and the fact that
		\begin{align*}
		\|\nabla F(u)\|_{L^2([0,\tilde{T}], L^{\frac{6}{5}})} &\lesssim \|u\|^2_{L^4([0,\tilde{T}],L^6)} \|\nabla u\|_{L^\infty([0,\tilde{T}],L^2)} \\
		&\lesssim \tilde{T}^{1/2} \|u\|^2_{L^\infty([0,\tilde{T}],L^6)} \|\nabla u\|_{L^\infty([0,\tilde{T}],L^2)}
		\end{align*}
		to have
		\[
		\|\nabla v^j_n\|_{L^\infty([0,\tilde{T}],L^2)} \lesssim \|\nabla v^j_n(0)\|_{L^2}
		\]
		for $n$ sufficiently large. A similar estimate holds for $j\geq J_0+1$ since $\|v^j_n\|_{L^4(\R, L^6)}$ is small by taking $J_0$ sufficiently large. Similarly, by \eqref{lim-J-tilde-W-J-n}, we have
		\[
		\|\nabla \tilde{W}^J_n(t)\|_{L^\infty([0,\tilde{T}],L^2)} \lesssim \|\nabla \tilde{W}^J_n\|_{L^2}
		\]
		for $J, n$ sufficiently large. Thus, we get
		\begin{align*}
		\|\nabla \tilde{u}^J_n\|^2_{L^\infty([0,\tilde{T}],L^2)} &\leq \sum_{j=1}^{J_2} \|\nabla v^j\|^2_{L^\infty([0,\tilde{T}],L^2)} + \sum_{j=J_2 +1}^J \|\nabla v^j_n\|^2_{L^\infty([0,\tilde{T}],L^2)} \\
		&\mathrel{\phantom{\leq \sum_{j=1}^{J_2} \|\nabla v^j\|^2_{L^\infty([0,\tilde{T}],L^2)}}}+ \|\nabla \tilde{W}^J_n(t)\|^2_{L^\infty([0,\tilde{T}],L^2)} \\
		&\lesssim J_2 B^2 + \sum_{j=J_2+1}^J \|\nabla v^j_n(0)\|^2_{L^2} + \|\nabla \tilde{W}^J_n\|^2_{L^2} +o_{J,n}(1)\\
		&\lesssim J_2 B^2 + \sum_{j=J_2 +1}^J \|\nabla \psi^j\|^2_{L^2} + \|\nabla W^J_n\|^2_{L^2}+ o_{J,n}(1) \\
		&\lesssim J_2 B^2 + \|\nabla u_{0,n}\|^2_{L^2} + o_{J,n}(1) \\
		&\lesssim J_2 B^2 + B^2 +o_{J,n}(1).
		\end{align*}
		Note that $J_2 \leq J_0$ which is independent of $J$ for $J$ large.	On the other hand, by the conservation of mass, we have similarly to \eqref{l2bound}
		\[
		\|v^j_n\|_{L^\infty([0,\tilde{T}],L^2)} = \|v^j_n(0)\|_{L^2} =\lim_{n\rightarrow \infty} \|e^{-it^j_n} \psi^j\|_{L^2} = \|\psi^j\|_{L^2} \leq \|u_{0,n}\|_{L^2} \leq 1.
		\]
		Collecting the above estimates, we show the boundedness of $\|\tilde{u}^J_n\|_{L^\infty([0,\tilde{T}],H^1)}$, hence \eqref{prop-u-Jn-1} follows. 
		
		To see \eqref{prop-u-Jn-2}, it suffices to show
		\begin{align} \label{est-A}
		\lim_{J\rightarrow \infty} \lim_{n\rightarrow \infty} \|A(J,n)\|_{L^{8/5}([0,\tilde{T}],W^{1,4/3})} + \|A(J,n)\|_{L^{8/3}([0,\tilde{T}],L^{4/3})} = 0 
		\end{align}
		and
		\begin{align} \label{est-B}
		\lim_{J\rightarrow \infty} \lim_{n\rightarrow \infty} \|B(J,n)\|_{L^{8/5}([0,\tilde{T}],W^{1,4/3})} + \|B(J,n)\|_{L^{8/3}([0,\tilde{T}],L^{4/3})} = 0,
		\end{align}
		where
		\begin{align*}
		A(J,n) &:= \sum_{j=1}^J F(v^j_n(\cdot, \cdot-x^j_n)) - F\left(\sum_{j=1}^J v^j_n (\cdot, \cdot-x^j_n) \right), \\
		B(J,n) &:= F\left( \tilde{u}^J_n - \tilde{W}^J_n(\cdot) \right) - F(\tilde{u}^J_n).
		\end{align*}
		The estimate \eqref{est-A} follows from standard argument (see e.g. \cite[Proposition 5.4]{HR}). A similar argument goes for \eqref{est-B} using \eqref{lim-J-tilde-W-J-n}. We omit the details.
		\end{proof}

At this point we can distinguish two possible cases.	\\
	
	\noindent \textit{Case 1. More than one non-zero profiles.} We  suppose that there exist at least two nontrivial profiles $\psi^j$. Then we infer from \eqref{H-gam-expan} and \eqref{est-v-jn} that
	\begin{align} \label{M-vj}
	M(v^j_n(t)) = M(v^j_n(0))= M(\psi^j)<1, \quad \forall j\geq 1.
	\end{align} 
	
	\noindent Moreover, by \eqref{prop-un-0}, \eqref{expan-N-un-t}, and \eqref{M-vj}, we have
	\begin{align} \label{key}
	\sup_{t\in [0,\infty)} -N(v^j_n(t)) M(v^j_n(t))< A, \quad E(v^j_n(t))M(v^j_n(t)) <\deltc
	\end{align}
	which, by Item $\mathcal A)$ (see page \pageref{case-A:page}), implies that 
	\[
	\|v^j_n\|_{L^8([0,\infty), L^4)} <\infty, \quad \forall j \geq 1.
	\]
	We can approximate
	\[
	u^J_n(t,x) \sim \sum_{j=1}^J v^j_n(t, x-x^j_n)
	\]
	using the long time perturbation argument and get for $J$ sufficiently large,
	\[
	\|u_n\|_{L^8([0,\infty), L^4)} <\infty
	\]
	which is a contradiction. \\
	
	\noindent \textit{Case 2. Only one non-zero profile.} Therefore we now have only one non-zero profile, namely 
	\[
	u_{0,n}(x) = e^{-it^1_n\Delta} \psi^1(x-x^1_n) + W_n(x), \quad \lim_{n\rightarrow \infty} \|e^{it\Delta} W_n\|_{L^8(\R, L^4)} =0.
	\]
	We claim that we cannot have $t^1_n \rightarrow -\infty$.  Indeed, suppose that $t^1_n \rightarrow -\infty$. It follows that	
	\[
	\|e^{it\Delta} u_{0,n}\|_{L^8([0,\infty),L^4)} \leq \|e^{it\Delta} \psi^1\|_{L^8([-t^1_n, \infty),L^4)} + \|e^{it\Delta} W_n\|_{L^8([0,\infty),L^4)} \rightarrow 0 \text{ as } n\rightarrow \infty.
	\]
	
\noindent The continuity argument yields $\|u_n\|_{L^8([0,\infty),L^4)} < \infty$ for $n$ sufficiently large. By Lemma \ref{lem-scat-cond}, $u_n(t)$ scatter in $H^1$ forward in time, which is a contradiction. 
	
	Let $v^1$ be the corresponding nonlinear profile associated to $\psi^1$ and set $v^1_n(t):= v^1(t-t^1_n)$. Accordingly to \eqref{non-prof-decom-un-0}, we have
	\[
	u_{0,n}(x) = v^1_n(0, x-x^1_n) + \tilde{W}^1_n(x).
	\]
	Arguing as above, we have that $v^1_n$ and $\tilde{W}^1_n(t):=\nls(t)\tilde{W}^1_n$ satisfy
	\[
	M(v^1_n(t)) \leq 1, \quad \sup_{t\in [0,\infty)} -N(v^1_n(t)) \leq A, \quad E(v^1_n(t)) \leq \deltc
	\]
	and
	\[
	\lim_{n\rightarrow \infty} \|\tilde{W}^1_n(t)\|_{L^8(\R,L^4)} =0.
	\]
	We infer that 
	\[
	M(v^1_n(t)) = 1, \quad E(v^1_n(t)) = \deltc.
	\]
	Indeed, if $M(v^1_n(t))<1$, then
	\[
	\sup_{t\in  [0,\infty)} -N(v^1_n(t)) M(v^1_n(t))< A, \quad E(v^1_n(t))M(v^1_n(t)) <\deltc.
	\]	
\noindent By Item $\mathcal A),$ (see page \pageref{case-A:page}), we have
	\[
	\|v^1_n\|_{L^8([0,\infty),L^4)} <\infty
	\]
	which, by the long time perturbation argument, implies
	\[
	\|u_n\|_{L^8([0,\infty),L^4)} <\infty.
	\]
	We get a contradiction, and similarly we can discard the possibility that $E(v^1_n(t)) < \deltc.$ We now define $u_{\csb}$ the solution to \eqref{dip-NLS} with initial data $v^1(0)$. We see that $u_{\csb}$ satisfies \eqref{prop-uc}. Indeed, we have
	\begin{align*}
	M(u_{\csb}) &= M(v^1(0)) = M(v^1(t)) = M(v^1_n(t)) = 1, \\
	E(u_{\csb}) &= E(v^1(0)) = E(v^1(t)) = E(v^1_n(t)) = \deltc
	\end{align*}
	and
	\[
	\sup_{t\in[0,\infty)} -N(u_{\csb}(t)) = \sup_{t\in [0,\infty)} -N(v^1(t)) =\sup_{t \in [t^1_n,\infty)} -N(v^1_n(t)) \leq A.
	\]
	By the definition of $\deltc$, we must have $\|u_{\csb}\|_{L^8([0,\infty),L^4)} = \infty$. The construction of a critical (i.e. global and non-scattering) solution (see \eqref{prop-uc}) is done. \\
	
	\noindent \textit{Step 3. Exclusion of the critical solution.} By the compactness argument similar to \cite{BF}, we show that there exists a continuous path $x(t) \in \R^3$ which grows sub-linearly in time, such that
	\[
	\mathcal{O}:= \left\{ u_{\csb}(t,\cdot-x(t)) \ : \ t \in [0,\infty)\right\} 
	\]
	is precompact in $H^1$. Using this compactness result, the rigidity argument using localized virial estimates and Lemma $\ref{lem-bound-below}$ shows that $u_{\csb}(t) \equiv 0$ which contradicts \eqref{prop-uc}.\\
	
	This in turn also concludes the proof of Proposition \ref{prop-scat-equi}, hence Theorem \ref{theo-scat-crite} follows.

\begin{remark} \label{rem-cond-GW}
		The condition \eqref{cond-GW} is needed in the proof of Theorem \ref{theo-scat-crite}. It ensures that the functional $N$ is always negative which 
		is crucial to get \eqref{key}. If there is a profile $v^{j_0}_n$ such that $N(v^{j_0}_n(t_0))$ becomes non-negative at time $t_0$, then there may be another $v^{j_1}_n$ satisfying $N(v^{j_1}_n(t_0)) \geq A$. In this case, we cannot conclude.
	\end{remark}
	
	\subsection{Scattering above the threshold}
	We next show the energy scattering result given in Theorem \ref{theo-scat-above}.  as the first consequence of the more general Theorem \ref{theo-scat-crite}. \\
	
	\noindent {\it Proof of Theorem \ref{theo-scat-above}.} Let $u_0 \in \Sigma$ satisfying \eqref{cond-1-above}, \eqref{cond-2-above}, \eqref{cond-scat-1-above} and \eqref{cond-scat-2-above}. We will show that \eqref{scat-crite} holds true, which in turn  implies the result, by means of Theorem \ref{theo-scat-crite}. It is done in several steps. The strategy is  in the spirit of Duyckaerts and Roudenko \cite{DR-beyond}. \\
	
	\noindent \textit{Step 1. Reduction of conditions.} We first recall the following estimate due to Gao-Wang \cite{GW}:
	\begin{align} \label{est-GW}
	\left(\ima \int_{\R^3} \overline{f}(x) x\cdot \nabla f(x) dx \right)^2 \leq \|x f\|^2_{L^2} \left( H(f) - \frac{(-N(f))^{\frac{2}{3}}}{(C_{\opt})^{\frac{2}{3}} (M(f))^{\frac{1}{3}}} \right), \quad f\in \Sigma,
	\end{align}
	where $C_{\opt}$ is the sharp constant in \eqref{GN-ineq}. Let $V(t)$ be as in \eqref{defi-V}. It is known (see e.g. \cite{CMS}) that
	\begin{align} \label{deri-V}
	V'(t) = 2\ima \int_{\R^3} \overline{u}(t,x) x \cdot \nabla u(t,x) dx, \quad V''(t) = 2 H(u(t)) + 3N(u(t)).
	\end{align}
	In particular, we have
	\begin{align} \label{deri-seco-V}
	V''(t) = 4E(u) + N(u(t)) = 6E(u) - H(u(t)).
	\end{align}
	It follows that
	\begin{align} \label{iden-N-H}
	-N(u(t)) = 4E(u) - V''(t), \quad H(u(t)) = 6E(u) - V''(t).
	\end{align}
	Thanks to \eqref{cond-1-GW}, $N(u(t))$ takes negative values, hence $V''(t) \leq 4E(u)$ for all times $t$ on the lifespan of the solution. Inserting \eqref{iden-N-H} into \eqref{est-GW}, we infer that
	\begin{align*}
	\left(\frac{V'(t)}{2}\right)^2 \leq V(t) \left[ 6E(u)-V''(t) - \frac{(4E(u)-V''(t))^{\frac{2}{3}}}{(C_{\opt})^{\frac{2}{3}} (M(u))^{\frac{1}{3}}} \right]
	\end{align*}
	which implies
	\begin{align} \label{est-z}
	(z'(t))^2 \leq h(V''(t)),
	\end{align}
	where $z(t):= \sqrt{V(t)}$	and
	\[
	h(\lambda):= 6E(u)- \lambda - \frac{(4E(u)-\lambda)^{\frac{2}{3}}}{(C_{\opt})^{\frac{2}{3}} (M(u))^{\frac{1}{3}}}
	\]
	with $\lambda \leq 4E(u)$. We see that the real function $h$ is decreasing on $(-\infty, \lambda_0)$ and increasing on $(\lambda_0, 4E(u))$, where $\lambda_0$ satisfies
	\begin{align} \label{defi-lambda-0}
	1= \frac{2(4E(u)-\lambda_0)^{-\frac{1}{3}}}{3 (C_{\opt})^{\frac{2}{3}} (M(u))^{\frac{1}{3}}}.
	\end{align}
	This implies that
	\[
	h(\lambda_0) = \frac{\lambda_0}{2}.
	\]
	Using \eqref{opti-cons} and \eqref{inde-quant-proof}, we infer from \eqref{defi-lambda-0} that
	\begin{align} \label{iden-lambda-0}
	\frac{E(u) M(u)}{E(\phi) M(\phi)} \left(1-\frac{\lambda_0}{4E(u)}\right) =1.
	\end{align}
	As consequence of the above identity and the conservation laws of mass and energy, we see that the assumption \eqref{cond-1-above} is equivalent to 
	\begin{align} \label{cond-1-above-equi}
	\lambda_0 \geq 0
	\end{align}
	and the assumption \eqref{cond-2-above} is equivalent to $(V'(0))^2 \geq 2 \lambda_0 V(0)$ or
	\begin{align} \label{cond-2-above-equi}
	(z'(0))^2 \geq \frac{\lambda_0}{2} = h(\lambda_0).
	\end{align}
	Moreover, the assumption \eqref{cond-scat-1-above} is equivalent to 
	\begin{align} \label{cond-scat-1-above-equi}
	V''(0) >\lambda_0.
	\end{align}
	Indeed, by \eqref{cond-scat-1-above}, \eqref{inde-quant-proof} and \eqref{iden-lambda-0}, we have
	\begin{align*}
	V''(0) &= 4E(u_0) + N(u_0) \\
	&= 4E(u_0) + \frac{N(u_0) M(u_0)}{M(u_0)} \\
	&> 4E(u_0) + \frac{N(\phi) M(\phi)}{M(u_0)} \\
	& = 4E(u_0) \left(1-\frac{-N(\phi) M(\phi)}{4E(u_0) M(u_0)} \right)\\
	& = 4E(u_0) \left( 1- \frac{E(\phi) M(\phi)}{E(u_0) M(u_0)} \right) \\
	&= \lambda_0.
	\end{align*}
	In addition, the assumption \eqref{cond-scat-2-above} is equivalent to 
	\begin{align} \label{cond-scat-2-above-equi}
	z'(0) \geq 0.
	\end{align}
	
	\noindent \textit{Step 2. Lower bound of $V''(t)$.}
	We claim that there exists $\delta_0>0,$ possibly small, such that for all $t\in [0,T^*)$,
	\begin{align} \label{claim-above}
	V''(t) \geq \lambda_0 + \delta_0.
	\end{align}
	By \eqref{cond-scat-1-above-equi}, we take $\delta_1>0$ so that 
	\[
	V''(0) \geq \lambda_0 + 2\delta_1.
	\]
	By continuity, we have
	\begin{align} \label{claim-above-proof-1}
	V''(t) >\lambda_0 + \delta_1, \quad \forall t\in [0,t_0).
	\end{align}
	for $t_0>0$ sufficiently small. By reducing $t_0$ if necessary, we can assume that
	\begin{align} \label{claim-above-proof-2}
	z'(t_0) >\sqrt{h(\lambda_0)}.
	\end{align}
	Indeed, if $z'(0) >\sqrt{h(\lambda_0)}$, then \eqref{claim-above-proof-2} follows from the continuity argument. Otherwise, if $z'(0)=\sqrt{h(\lambda_0)}$, then using the identity
	\begin{align} \label{claim-above-proof-3}
	z''(t) = \frac{1}{z(t)} \left(\frac{V''(t)}{2} - (z'(t))^2\right)
	\end{align}
	and \eqref{cond-scat-1-above-equi}, we have $z''(0)>0$. This shows \eqref{claim-above-proof-2} by taking $t_0>0$ sufficiently small. Thanks to \eqref{claim-above-proof-2}, we take $\epsilon_0>0$  small enough so that
	\begin{align} \label{claim-above-proof-4}
	z'(t_0) \geq \sqrt{h(\lambda_0)} + 2\epsilon_0.
	\end{align}
	We will prove by contradiction that 
	\begin{align} \label{claim-above-proof-5}
	z'(t) > \sqrt{h(\lambda_0)} + \epsilon_0, \quad \forall t\geq t_0.
	\end{align}
	Suppose that it is not true and set
	\[
	t_1:= \inf \left\{ t\geq t_0 \ : \ z'(t) \leq \sqrt{h(\lambda_0)} + \epsilon_0 \right\}.
	\]
	By \eqref{claim-above-proof-4}, we have $t_1 >t_0$. By continuity, we have
	\begin{align} \label{claim-above-proof-6}
	z'(t_1) = \sqrt{h(\lambda_0)} + \epsilon_0
	\end{align}
	and
	\begin{align} \label{claim-above-proof-7}
	z'(t) \geq \sqrt{h(\lambda_0)} + \epsilon_0, \quad \forall t\in [t_0,t_1].
	\end{align}
	By \eqref{est-z}, we see that
	\begin{align} \label{claim-above-proof-8}
	\left(\sqrt{h(\lambda_0)} + \epsilon_0\right)^2 \leq (z'(t))^2 \leq h(V''(t)), \quad \forall t\in [t_0,t_1].
	\end{align}
	It follows that $h(V''(t)) > h(\lambda_0)$ for all $t\in [t_0,t_1]$, thus $V''(t) \ne \lambda_0$ and by continuity, $V''(t) >\lambda_0$ for all $t\in [t_0,t_1]$. 
	
	We will prove that there exists a constant $C>0$ such that
	\begin{align} \label{claim-above-proof-9}
	V''(t) \geq \lambda_0 +\frac{\sqrt{\epsilon_0}}{C}, \quad \forall t\in [t_0,t_1].
	\end{align}
	Indeed, by the Taylor expansion of $h$ near $\lambda_0$ with the fact $h'(\lambda_0)=0$, there exists $a>0$ such that
	\begin{align} \label{claim-above-proof-10}
	h(\lambda) \leq h(\lambda_0) + a(\lambda-\lambda_0)^2, \quad \forall \lambda \hbox{\quad s.t. \quad}  |\lambda-\lambda_0| \leq 1.
	\end{align}
	If $V''(t) \geq \lambda_0+1$, then \eqref{claim-above-proof-9} holds by taking $C$ large. If $\lambda_0<V''(t) \leq \lambda_0+1$, then by \eqref{claim-above-proof-8} and \eqref{claim-above-proof-10}, we get
	\[
	\left(\sqrt{h(\lambda_0)} +\epsilon_0\right)^2 \leq (z'(t))^2 \leq h(V''(t)) \leq h(\lambda_0) + a (V''(t)-\lambda_0)^2
	\]
	thus
	\[
	2\epsilon_0\sqrt{h(\lambda_0)} + \epsilon_0^2 \leq a (V''(t)-\lambda_0)^2.
	\]
	This shows \eqref{claim-above-proof-9} with $C=\sqrt{\frac{a}{2}} [h(\lambda_0)]^{-\frac{1}{4}}$. 
	
\noindent	However, by \eqref{claim-above-proof-3}, \eqref{claim-above-proof-6} and \eqref{claim-above-proof-9}, we have
	\begin{align*}
	z''(t_1) &= \frac{1}{z(t_1)} \left(\frac{V''(t_1)}{2} - (z'(t_1))^2\right) \\
	&\geq \frac{1}{z(t_1)} \left(\frac{\lambda_0}{2} + \frac{\sqrt{\epsilon_0}}{2C}- \left(\sqrt{h(\lambda_0)} +\epsilon_0\right)^2 \right) \\
	&\geq \frac{1}{z(t_1)} \left(\frac{\sqrt{\epsilon_0}}{2C} -2\epsilon_0 \sqrt{h(\lambda_0)} - \epsilon_0^2\right)>0
	\end{align*}
	provided $\epsilon_0$ is taken small enough, and this contradicts \eqref{claim-above-proof-6} and \eqref{claim-above-proof-7}. This proves \eqref{claim-above-proof-5}. Note that we have also proved \eqref{claim-above-proof-9} for all $t\in [t_0,T^*)$. This together with \eqref{claim-above-proof-1} imply \eqref{claim-above} with $\delta_0=\min \left\{\delta_1,\frac{\sqrt{\epsilon_0}}{C}\right\}$.\\
	
	\noindent \textit{Step 3. Conclusion.}
	Eventually, we are able to prove \eqref{scat-crite}. It follows from \eqref{claim-above} that
	\begin{align*}
	-N(u(t)) M(u(t)) &= (4 E(u) - V''(t)) M(u) \\
	&\leq (4E(u) -\lambda_0 -\delta_0) M(u) \\
	&\leq 4 E(\phi) M(\phi) - \delta_0 M(u) \\
	& = -(1-\eta) N(\phi) M(\phi) 
	\end{align*}
	for all $t\in [0,T^*)$, where $\eta:= \frac{\delta_0M(u)}{4E(\phi) M(\phi)}>0$. Here we have used \eqref{iden-lambda-0} to get the third line and \eqref{inde-quant-proof} to get the last line. This shows \eqref{scat-crite} and the proof of Theorem \ref{theo-scat-above} is complete.
	
\subsection{Construction on initial data as in Theorem \ref{theo-scat-above}}	
We conclude this Section by showing  the existence of initial data  satisfying the conditions of Theorem \ref{theo-scat-above}. For the reader's convenience,  we recall that we are looking to functions  $u_0 \in \Sigma$ such that
\begin{align}
E(u_0) M(u_0) &\geq E(\phi) M(\phi), \label{cond-1} \\
\frac{E(u_0) M(u_0)}{E(\phi) M(\phi)} &\left(1-\frac{(V^\prime(0))^2}{8E(u_0) V(0)}\right) \leq 1, \label{cond-2} \\
- N(u_0) M(u_0) &< - N(\phi) M(\phi), \label{cond-3} \\
V'(0) &\geq 0. \label{cond-4}
\end{align}
To this end, we follow an idea of Duyckaerts and Roudenko \cite{DR-beyond}. Let $v_0 \in\Sigma$ and denote
\begin{equation}\label{in-datum-osc}
u_0(x):= e^{i \mu |x|^2} v_0(x), \quad \mu \in \R
\end{equation}
A direct computation shows
\begin{align*}
M(u_0) &= \|u_0\|^2_{L^2} = \|v_0\|^2_{L^2} = M(v_0), \\
N(u_0) &= \int \lambda_1 |u_0(x)|^4 +\lambda_2 (K\ast |u_0(x)|^2) |u_0(x)|^2 dx \\
&= \int \lambda_1 |v_0(x)|^4 +\lambda_2 (K\ast |v_0(x)|^2) |v_0(x)|^2 dx = N(v_0), 
\end{align*}
and
\[
H(u_0) = \|\nabla u_0\|^2_{L^2} = \|\nabla v_0\|^2_{L^2} + 4 \mu^2 \|xv_0\|^2_{L^2} + 4 \mu \ima \int x \cdot \nabla v_0(x) \overline{v}_0(x) dx.
\]
We also have $\|xu_0\|^2_{L^2} = \|xv_0\|^2_{L^2}$, and, by denoting with $u(t)$ the solution to \eqref{dip-NLS} with initial datum $u_0$ as in \eqref{in-datum-osc}, we have 
\[
V^\prime(0) = 2 \ima \int x \cdot \nabla u_0(x) \overline{u}_0(x) dx = 2 \ima \int x \cdot \nabla v_0(x) \overline{v}_0(x) dx + 4 \mu \|xv_0\|^2_{L^2}.
\]
Observe that if $v_0$ is real-valued, then 
\begin{align*}
E(u_0) = E(v_0) + 2\mu^2 \|xv_0\|^2_{L^2}, \quad V^\prime(0) = 4 \mu \|xv_0\|^2_{L^2} .
\end{align*}
The conditions \eqref{cond-1}--\eqref{cond-4} now become
\begin{align}
\left(E(v_0) + 2\mu^2 \|xv_0\|^2_{L^2}\right) M(v_0) &\geq E(\phi) M(\phi), \label{cond-1-equi} \\
E(v_0) M(v_0) &\leq E(\phi) M(\phi), \label{cond-2-equi} \\
- N(v_0) M(v_0) &< - N(\phi) M(\phi), \label{cond-3-equi} \\
\mu \|xv_0\|^2_{L^2} &\geq 0.\label{cond-4-equi}
\end{align}
Note that condition \eqref{cond-4-equi} is satisfied for all $\mu>0$. Let us take
\[
v_0(x) = \lambda^{\frac{5}{2}} \phi(\lambda x), \quad \lambda>0,
\]
where $\phi$ is a non-negative ground state related to \eqref{ell-equ}. Note that such a ground state exists due to \cite{AS}. A calculation shows
\[
M(v_0) = \lambda^2 M(\phi), \quad N(v_0) = \lambda^7 N(\phi), \quad H(v_0) = \lambda^4 H(\phi), \quad \|xv_0\|^2_{L^2} = \|x\phi\|^2_{L^2}.
\]
By plugging the above identities into \eqref{cond-1-equi}--\eqref{cond-4-equi}, we get
\begin{align}
\left(\frac{\lambda^4}{2} H(\phi) + \frac{\lambda^7}{2}N(\phi) + 2\mu^2 \|x\phi\|^2_{L^2}\right) \lambda^2 M(\phi) &\geq E(\phi)M(\phi), \label{cond-lamb-1} \\
\left(\frac{\lambda^4}{2} H(\phi) + \frac{\lambda^7}{2}N(\phi)\right) \lambda^2 M(\phi) &\leq E(\phi) M(\phi), \label{cond-lamb-2} \\
- \lambda^9 N(\phi) M(\phi) &<-N(\phi) M(\phi), \label{cond-lamb-3} \\
\mu\|x\phi\|^2_{L^2} &\geq 0. \label{cond-lamb-4} 
\end{align}
Since $-N(\phi)>0$ and $E(\phi)>0$, we see that \eqref{cond-lamb-2} and \eqref{cond-lamb-3} are satisfied for $\lambda>0$ sufficiently small. Once $\lambda$ is fixed, we take $\mu>0$ sufficiently large so that \eqref{cond-lamb-1} and \eqref{cond-lamb-4} are fulfilled. This shows the existence of initial data satisfying \eqref{cond-1}--\eqref{cond-4}. The proof is complete.

	\section{Blow-up or Grow-up result}\label{S4}
	\setcounter{equation}{0}
	
	The blow-up criterion given in Theorem \ref{theo-blow-crite} follows from the  Du-Wu-Zhang argument, see \cite{DWZ}, and some control on the non-local dipolar term proved by the second author and Bellazzini in \cite{BF}. 
	
	Let us start by recalling the following localized virial identities related to \eqref{dip-NLS}. Let $\chi$ be a smooth radial function satisfying
	\begin{align} \label{defi-chi}
	\chi(x) =\chi(r)= \left\{
	\begin{array}{ccc}
	r^2 &\text{if}&  r \leq 1, \\
	0 &\text{if} & r \geq 2,
	\end{array}
	\right.
	\quad \chi''(r) \leq 2 \text{ for all } r=|x| \geq 0.
	\end{align}
	Given $R>1$, we define the radial function
	\begin{align} \label{defi-varphi-R}
	\varphi_R(x):= R^2 \chi(x/R).
	\end{align} 
	We define the following localized virial quantity
	\begin{align} \label{defi-z-R}
	z_{\varphi_R}(t):= \int_{\R^3} \varphi_R(x) |u(t,x)|^2 dx.
	\end{align}
	We have the following localized virial estimate.
	\begin{proposition} \label{prop-loca-viri-est}
		It holds that
		\[
		z'_{\varphi_R}(t) =  \ima \int_{\R^3} \overline{u}(t) \nabla \varphi_R \cdot \nabla u(t) dx
		\]
		and
		\begin{equation}\label{loca-viri-est}
		z''_{\varphi_R}(t) \leq 2 G(u(t)) + A_R(t),
		\end{equation}
		where
		\begin{equation}
		\begin{aligned} \label{est-A-R}
		|A_R(t)|&\lesssim  R^{-1} +R^{-1}\|u(t)\|^2_{H^1} \|u(t)\|^2_{L^4(|x| \gtrsim R)} + R^{-1} \|u(t)\|^2_{H^1} \\
		&+\|u(t)\|^2_{L^4(|x| \gtrsim R)} +\|u(t)\|^4_{L^4(|x| \gtrsim R)},	\end{aligned}
		\end{equation}
		for all $t\in[0,T^*)$.
	\end{proposition}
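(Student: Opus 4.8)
The plan is to compute the first two time derivatives of the localized virial quantity $z_{\varphi_R}(t)$ defined in \eqref{defi-z-R}, and then compare $z''_{\varphi_R}(t)$ to the (non-localized) virial functional $G(u(t))$ from \eqref{def:G}, bounding the discrepancy by the error term $A_R(t)$. The structure mirrors the classical Glassey-type computation, but the non-local dipolar convolution term requires separate care. First I would record the first derivative formula $z'_{\varphi_R}(t) = \ima \int \overline{u}(t)\,\nabla\varphi_R\cdot\nabla u(t)\,dx$, which follows by differentiating under the integral sign and using the equation \eqref{dip-NLS} together with an integration by parts (the local smoothing/regularity needed to justify this is standard for $H^1$ solutions via an approximation argument). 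Differentiating once more in time and again invoking \eqref{dip-NLS} yields the standard Morawetz/virial structure
\[
z''_{\varphi_R}(t) = \int \Delta^2\varphi_R\, |u|^2 \,(\text{lower order}) + 2\int \partial_j\partial_k\varphi_R\, \rea(\partial_j u\, \overline{\partial_k u})\,dx + (\text{nonlinear contributions}),
\]
so I would organize the computation into the kinetic (Laplacian) part, the local nonlinear part coming from $\lambda_1|u|^4$, and the non-local part coming from $\lambda_2(K\ast|u|^2)|u|^2$.

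The key observation is that $\varphi_R(x)=|x|^2$ exactly on $\{|x|\le R\}$ by \eqref{defi-chi}--\eqref{defi-varphi-R}, so on that inner region the Hessian of $\varphi_R$ equals $2\,\mathrm{Id}$ and the localized integrand reproduces the exact virial identity $z''=2G(u)$ (recall from \eqref{deri-V} that the unweighted virial gives $V''(t)=2H(u(t))+3N(u(t))=2G(u(t))$). The entire deviation $A_R(t)$ is therefore supported on the annulus $\{|x|\gtrsim R\}$ where $\nabla^2\varphi_R - 2\,\mathrm{Id}$ and the higher derivatives $\nabla^3\varphi_R$, $\Delta^2\varphi_R$ are nonzero but controlled by negative powers of $R$ (using $\chi''\le 2$ and the scaling $\varphi_R(x)=R^2\chi(x/R)$, one gets $\|\nabla^2\varphi_R\|_{L^\infty}\lesssim 1$, $\|\nabla^3\varphi_R\|_{L^\infty}\lesssim R^{-1}$, $\|\Delta^2\varphi_R\|_{L^\infty}\lesssim R^{-2}$). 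The kinetic and biharmonic terms thus contribute the $R^{-1}$ and $R^{-1}\|u\|_{H^1}^2$ pieces of \eqref{est-A-R}, while the local nonlinearity $\lambda_1|u|^4$ on the annulus contributes the $\|u\|_{L^4(|x|\gtrsim R)}^4$ term.

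The main obstacle, and the place where this proof genuinely differs from the standard NLS argument, is controlling the contribution of the non-local dipolar term $\lambda_2(K\ast|u|^2)|u|^2$ to $z''_{\varphi_R}$. Unlike the local nonlinearity, $K\ast|u|^2$ is not localized: even where $\nabla^2\varphi_R$ is supported on $\{|x|\gtrsim R\}$, the convolution kernel $K$ samples $|u|^2$ over all of $\R^3$. The strategy here is to exploit the $L^p$-boundedness of $f\mapsto K\ast f$ recorded after \eqref{fourier-trans-K} (continuous on $L^p$ for $1<p<\infty$), so that a Hölder estimate $\|(K\ast|u|^2)\,|u|^2\|$ on the annulus can be split into a factor with the convolution controlled in $L^2$ by $\|u\|_{L^4}^2$ (globally) and a localized factor $\|u\|_{L^4(|x|\gtrsim R)}^2$. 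This is exactly what produces the mixed terms $R^{-1}\|u\|_{H^1}^2\|u\|_{L^4(|x|\gtrsim R)}^2$ and $\|u\|_{L^4(|x|\gtrsim R)}^2$ in \eqref{est-A-R}. I would cite the corresponding non-local control from \cite{BF} to handle the commutator-type term where derivatives of $\varphi_R$ meet the convolution, since the Calder\'on--Zygmund nature of $K$ prevents a naive pointwise bound and the gain must come from the $R^{-1}$ decay of $\nabla^3\varphi_R$ combined with the operator bound. Assembling the inner-region exact identity with these annular estimates gives \eqref{loca-viri-est}, completing the proof.
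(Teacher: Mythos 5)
Your proposal is correct and follows essentially the same route as the paper: the paper likewise derives the localized virial identities, bounds the Hessian term by $2\|\nabla u(t)\|_{L^2}^2$ using $\chi''\leq 2$ together with Cauchy--Schwarz, localizes the quartic error on $\{|x|\gtrsim R\}$, and, exactly as you propose, quotes the estimate for the non-local term $B_R(t)=\int \nabla\varphi_R\cdot\nabla(K\ast|u|^2)|u|^2\,dx$ proved in Section~6 of \cite{BF} rather than re-deriving the Calder\'on--Zygmund/H\"older splitting. Only minor bookkeeping differs (e.g.\ the $R^{-1}\|u(t)\|_{H^1}^2$ contribution in \eqref{est-A-R} actually originates from the non-local estimate \eqref{est-B-term} rather than from the kinetic or biharmonic pieces, and your schematic constants ignore the $\tfrac12\Delta$ normalization), none of which affects the argument.
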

	
	\begin{proof} Straightforward computations show that 
		\[
		z'_{\varphi_R}(t) =  \ima \int_{\R^3} \overline{u}(t) \nabla \varphi_R \cdot \nabla u(t) dx 
		\]
		and
		\begin{align*}
		z''_{\varphi_R}(t) &= -\frac{1}{4}\int_{\R^3} \Delta^2 \varphi_R |u(t)|^2 dx +  \sum_{j,k=1}^3 \rea \int_{\R^3} \partial^2_{jk} \varphi_R \partial_j \overline{u}(t) \partial_k u(t) dx \\
		&\mathrel{\phantom{=}} + \frac{\lambda_1}{2} \int_{\R^3} \Delta \varphi_R |u(t)|^4 dx - \lambda_2 \int_{\R^3} \nabla \varphi_R \cdot \nabla (K\ast |u(t)|^2) |u(t)|^2 dx.
		\end{align*}
		Using the fact $\|\Delta^2\varphi_R\|_{L^\infty} \lesssim R^{-2}$ and the conservation of mass, we have
		\[
		\int_{\R^3} \Delta^2\varphi_R |u(t)|^2 dx \lesssim R^{-2}.
		\]
		Since $\varphi_R$ is radial, we use the fact
		\[
		\partial_j = \frac{x_j}{r} \partial_r, \quad \partial^2_{jk} = \left(\frac{\delta_{jk}}{r}-\frac{x_jx_k}{r^3}\right) \partial_r + \frac{x_jx_k}{r^2} \partial^2_r
		\]
		to write
		\begin{align*}
		\sum_{j,k=1}^3 \rea \int_{\R^3} \partial^2_{jk} \varphi_R \partial_j \overline{u}(t) &\partial_k u(t) dx \\
		&= \int_{\R^3} \frac{\varphi'_R(r)}{r} |\nabla u(t)|^2 dx + \int_{\R^3} \left(\frac{\varphi''_R(r)}{r^2}-\frac{\varphi'_R(r)}{r^3} \right)|x\cdot \nabla u(t)|^2 dx.  
		\end{align*}
		Since $\varphi''_R(r)\leq 2$, the Cauchy-Schwarz inequality implies
		\[
		\sum_{j,k=1}^3 \rea \int_{\R^3} \partial^2_{jk} \varphi_R \partial_j \overline{u}(t) \partial_k u(t) dx  \leq 2\|\nabla u(t)\|^2_{L^2}. 
		\]
		Since $\varphi_R(x) = |x|^2$ for $|x| \leq R$, we see that
		\begin{align*}
		\int_{\R^3} \Delta \varphi_R |u(t)|^4 dx &= 6 \|u(t)\|^4_{L^4} + \int_{|x|>R} (\Delta \varphi_R -6) |u(t)|^4 dx \\
		&= 6\|u(t)\|^4_{L^4} + O\left( \|u(t)\|^4_{L^4(|x|>R)}\right).
		\end{align*}
		It follows that
		\begin{equation}\label{est-z-R}
		\begin{aligned}
		z''_{\varphi_R}(t) \leq 2 \|\nabla u(t)\|^2_{L^2} + 3\lambda_1 \|u(t)\|^4_{L^4} -\lambda_2 B_R(t) + O(R^{-2}) + O\left( \|u(t)\|^4_{L^4(|x|>R)}\right), 
		\end{aligned}
		\end{equation}
		where
		\[
		B_R(t):= \int_{\R^3} \nabla\varphi_R \cdot \nabla (K\ast |u(t)|^2) |u(t)|^2 dx.
		\]
		
As we  proved in \cite[Section 6]{BF}, we can estimate the non-local term $B_R(t)$ in the following fashion:
\begin{align}\label{est-B-term}
B_R(t)= &- 3\int (K\ast |u(t)|^2) |u(t)|^2 dx \nonumber \\
&+ O \Big(R^{-1}\|u(t)\|_{H^1}^2\|u(t)\|^2_{L^4(|x| \gtrsim R)}+R^{-3}\|u(t)\|_{H^1}^2\|u(t)\|_{L^2(|x|\gtrsim R)}^2 \\
&\left.\quad +R^{-3}\|u(t)\|_{H^1}\|u(t)\|_{L^2(|x| \gtrsim R)}\|u(t)\|_{L^2}^2+R^{-1}\|u(t)\|^2_{L^4(|x| \gtrsim R)} +\|u(t)\|^2_{L^4(|x| \gtrsim R)}\right). \nonumber
\end{align}
Gluing together \eqref{est-z-R} and \eqref{est-B-term}, and using the Sobolev embedding and the conservation of the mass, we get 
		\begin{align*}
		z''_{\varphi_R}(t) &\leq 2 \|\nabla u(t)\|^2_{L^2} + 3\lambda_1 \|u(t)\|^4_{L^4} + 3 \lambda_2 \int (K\ast |u(t)|^2) |u(t)|^2 dx   \\
		&\mathrel{\phantom{\leq}} + O \Big(R^{-1} + R^{-1}\|u(t)\|^2_{H^1} \|u(t)\|^2_{L^4(|x| \gtrsim R)} + R^{-1} \|u(t)\|^2_{H^1} \\
		&\mathrel{\phantom{\leq + O \Big(R^{-1}}} +\|u(t)\|^2_{L^4(|x| \gtrsim R)} + \|u(t)\|^4_{L^4(|x| \gtrsim R)}  \Big)
		\end{align*}
		for all $t\in [0,T^*)$. Here we have used the fact that $R^{-1}\geq R^{-\beta}$, for $\beta\geq1$. The proof is complete.\\
	\end{proof}
	
	We are now able to prove the blow-up or grow-up result given in Theorem \ref{theo-blow-crite}.\\
	
	\noindent {\it Proof of Theorem \ref{theo-blow-crite}.}
	Let $u:[0,T^*)\times \R^3 \rightarrow \C$ be a $H^1$-solution to \eqref{dip-NLS} satisfying \eqref{blow-crite}. If $T^*<\infty$, then we are done. If $T^*=\infty$, we will show that there exists a time sequence $t_n \rightarrow \infty$ such that 
	\[
	\|\nabla u(t_n)\|_{L^2} \rightarrow \infty
	\] 
	as $n\rightarrow \infty$. Suppose that it is not true. Then we must have
	\begin{align} \label{bound-solu}
	\sup_{t\in [0,\infty)} \|\nabla u(t)\|_{L^2} <\infty.
	\end{align}
	Using \eqref{loca-viri-est}, \eqref{est-A-R}, \eqref{bound-solu}, and the Gagliardo-Nirenberg's inequality we see that
	\begin{align} \label{est-z-R-appl}
	z''_{\varphi_R}(t) \leq 2 G(u(t)) + \tilde A_R(t),
	\end{align}
	where
	\begin{align} \label{est-A-R-appl}
	|\tilde A_R(t)|\lesssim R^{-1} + \|u(t)\|^{1/2}_{L^2(|x| \gtrsim R)}+\|u(t)\|_{L^2(|x| \gtrsim R)}.
	\end{align}
	It remains to estimate $\|u(t)\|_{L^2(|x| \gtrsim R)}$. To do this, we employ  the  technique developed in \cite{DWZ} in the following way. Let $\vartheta$ be a smooth radial function satisfying
	\[
	\vartheta (x) = \vartheta(r) = \left\{
	\begin{array}{ccc}
	0 &\text{if}& r \leq \frac{c}{2}, \\
	1 &\text{if}& r \geq c,
	\end{array}
	\right.
	\quad \vartheta'(r) \leq 1 \text{ for all } r=|x| \geq 0,
	\]
	where $c>0$ is a given constant. For $R>1$, we denote the radial function
	\[
	\psi_R(x) = \psi_R(r) := \vartheta(r/R).
	\]
	By the fundamental theorem of calculus, we have
	\[
	z_{\psi_R}(t) = z_{\psi_R}(0) + \int_0^t z'_{\psi_R}(s) ds \leq z_{\psi_R}(0) + t\sup_{s\in [0,t]} |z'_{\psi_R}(s)|.
	\]
	By the Cauchy-Schwarz's inequality, the conservation of mass and \eqref{bound-solu}, we estimate
	\begin{align*}
	\sup_{s\in [0,t]} |z'_{\psi_R}(s)| &= \sup_{s\in[0,t]} \Big|\ima \int \overline{u}(s) \nabla \psi_R \cdot \nabla u(s) dx \Big| \\
	&\leq \frac{\|u_0\|_{L^2}}{R} \sup_{s\in [0,t]} \|\nabla u(s)\|_{L^2} \\
	&\leq CR^{-1}
	\end{align*}
	for some constant $C>0$. Thus, we get
	\[
	z_{\psi_R}(t) \leq z_{\psi_R}(0) + CR^{-1} t.
	\]
	By the choice of $\vartheta$, we have
	\[
	z_{\psi_R}(0) = \int \psi_R(x) |u_0(x)|^2 dx \leq \int_{|x| \geq \frac{cR}{2}} |u_0(x)|^2 dx \rightarrow 0
	\]
	as $R \rightarrow \infty$ or $z_{\psi_R}(0) = o_R(1)$. Using the fact 
	\[
	\int_{|x| \geq cR} |u(t,x)|^2 dx \leq z_{\psi_R}(t),
	\]
	we obtain the following control on $L^2$-norm of the solution outside a large ball: for any $\eta>0$, there exists a constant $C>0$ independent of $R$ such that for any $t\in [0,T]$ with $T:= \frac{\eta R}{C}$,
	\begin{align} \label{est-L2-norm}
	\int_{|x|\gtrsim R} |u(t,x)|^2 dx \leq \eta + o_R(1).
	\end{align}
	Combining \eqref{est-z-R-appl}, \eqref{est-A-R-appl} and \eqref{est-L2-norm}, we obtain that 
	for any $\eta \in (0,1)$, there exists a constant $C>0$ independent of $R$ such that for any $t\in [0,T]$ with $T:= \frac{\eta R}{C}$ such that
	\[
	z''_{\varphi_R}(t) \leq 2 G(u(t)) + O\left(\left(\eta + o_R(1)\right)^{1/4}+\left(\eta + o_R(1)\right)^{1/2}\right)
	\]
	By the assumption \eqref{blow-crite}, we choose $\eta>0$ sufficiently small and $R>1$ sufficiently large to have
	\[
	z''_{\varphi_R}(t) \leq -\delta <0
	\]
	for all $t\in [0,T]$. Integrating twice from 0 to $T$, we get
	\begin{align*}
	z_{\varphi_R}(T) \leq z_{\varphi_R}(0) + z'_{\varphi_R}(0) T -\frac{\delta}{2} T^2 = z_{\varphi_R}(0) + z'_{\varphi_R}(0) \frac{\eta R}{C} - \frac{\delta \eta^2}{2C^2} R^2.
	\end{align*}
	Noting that $z_{\varphi_R}(0)= o_R(1) R^2$ and $z'_{\varphi_R}(0) = o_R(1) R$, we see that
	\[
	z_{\varphi_R}(T) \leq o_R(1) R^2 - \frac{\delta \eta^2}{2C^2} R^2.
	\]
	Taking $R>1$ large enough, we obtain $z_{\varphi_R}(T) \leq -\frac{\delta \eta^2}{4C}R^2 <0,$ thus we get a contradiction. The proof is complete.

	\section{Long time dynamics  at the mass-energy threshold}
	\label{S5}
	\setcounter{equation}{0}
	
	In this section, we study long time dynamics at the mass-energy threshold given in Theorem \ref{theo-dyna-at}. To do this, we need the following compactness of minimizing sequence for the Gagliardo-Nirenberg's inequality \eqref{GN-ineq}.

	\begin{lemma}\label{lem-compact} 
		Let $\phi$ be a ground state related to \eqref{ell-equ}. Let $\{f_n\}_{n\geq 1}$ be a sequence of functions in $H^1$ satisfying
		\[
		M(f_n) = M(\phi), \quad E(f_n) = E(\phi), \quad \lim_{n\rightarrow \infty} H(f_n) = H(\phi).
		\]
		Then there exist a subsequence still denoted by $\{f_n\}_{n\geq 1}$, a ground state $\tilde{\phi}$ related to \eqref{ell-equ} and a sequence $\{y_n\}_{n\geq1}\subset \R^3$ such that
		\[
		f_n(\cdot+y_n) \rightarrow e^{i\theta} \mu \tilde{\phi}(\mu \cdot) \quad \text{strongly in } H^1
		\]
		for some $\theta \in \R$ and $\mu:=\frac{M(\tilde{\phi})}{M(\phi)}>0$ as $n\rightarrow \infty$.
	\end{lemma}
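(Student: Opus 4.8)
The plan is to recognize $\{f_n\}$ as an optimizing sequence for the Gagliardo--Nirenberg inequality \eqref{GN-ineq} and then to run a profile-decomposition argument whose rigidity is driven by a strict superadditivity property of the functional governing \eqref{GN-ineq}. First I would record that the sequence is bounded in $H^1$: the constraint $M(f_n)=M(\phi)$ controls the $L^2$-norm and $H(f_n)\to H(\phi)$ controls the $\dot H^1$-norm. Writing $N=2E-H$ and using $E(f_n)=E(\phi)$ together with the Pohozaev relation $2E(\phi)=H(\phi)+N(\phi)$, I get $N(f_n)=2E(\phi)-H(f_n)\to N(\phi)$, so that $-N(f_n)\to -N(\phi)>0$. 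In view of \eqref{opti-cons}--\eqref{inde-quant-proof} this says precisely that $\{f_n\}$ saturates \eqref{GN-ineq} asymptotically, i.e. it is a minimizing sequence for the Weinstein functional \eqref{weins-func}.

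Next I would apply the profile decomposition (as used in the proof of Proposition \ref{prop-scat-equi}) to the bounded sequence $\{f_n\}$, together with the Pythagorean expansions \eqref{H-gam-expan} for $M$ and $H$ and the expansion \eqref{N-expan} for $N$, the vanishing of the nonlocal cross-terms across separating translations being exactly the content borrowed from \cite{BF}. Since $L(t)$ is an isometry on every $\dot H^\gamma$, profiles with $t^j_n\to\pm\infty$ disperse and contribute nothing to $-N$ in the limit, while the remainder contributes nothing to $-N$ as $J\to\infty$ because $|N(g)|\lesssim\|g\|_{L^4}^4$ and $\|L(t)W^J_n\|$ is asymptotically small; hence, denoting by $S$ the set of profiles with $t^j_n\equiv 0$, I obtain $-N(\phi)=\sum_{j\in S}\bigl(-N(\psi^j)\bigr)$, while at the same time $\sum_j H(\psi^j)\le H(\phi)$ and $\sum_j M(\psi^j)\le M(\phi)$.

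The heart of the proof is the rigidity step. Set $\Phi(h,m):=h^{3/2}m^{1/2}$ and note that $\Phi^{1/2}(h,m)=h^{3/4}m^{1/4}$ is nonnegative, positively $1$-homogeneous and strictly concave transversally to rays, hence strictly superadditive; writing $a_j:=\Phi^{1/2}(H(\psi^j),M(\psi^j))$ and using $\sum_j a_j^2\le(\sum_j a_j)^2$ yields $\sum_{j\in S}\Phi(H(\psi^j),M(\psi^j))\le\Phi\bigl(\sum_{j\in S}H(\psi^j),\sum_{j\in S}M(\psi^j)\bigr)\le\Phi(H(\phi),M(\phi))$, the last step by monotonicity of $\Phi$. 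Chaining this with \eqref{GN-ineq} applied to each $\psi^j$ and with $C_{\opt}\Phi(H(\phi),M(\phi))=-N(\phi)$ forces equality throughout. Equality in $\sum_j a_j^2\le(\sum_j a_j)^2$ leaves exactly one nontrivial standing profile $\psi^1$; strict monotonicity of $\Phi$ together with $\sum_j H(\psi^j)\le H(\phi)$ and $\sum_j M(\psi^j)\le M(\phi)$ forces $M(\psi^1)=M(\phi)$, $H(\psi^1)=H(\phi)$ and the vanishing of every dispersive profile and of the remainder in $\dot H^1\cap L^2$; finally equality in \eqref{GN-ineq} makes $\psi^1$ a genuine optimizer, $-N(\psi^1)=-N(\phi)$. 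Setting $y_n:=x^1_n$, the weak convergence $f_n(\cdot+y_n)\rightharpoonup\psi^1$ together with the convergence of the $L^2$- and $\dot H^1$-norms upgrades to strong convergence in $H^1$.

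It remains to identify the optimizer. By the Euler--Lagrange equation for \eqref{GN-ineq}, after a dilation $x\mapsto\mu x$ and a phase/amplitude normalization, $\psi^1=e^{i\theta}\mu\tilde\phi(\mu\,\cdot)$ with $\tilde\phi$ solving \eqref{ell-equ} and minimizing \eqref{weins-func}, i.e. a ground state; the constraint $M(\psi^1)=M(\phi)$ then pins $\mu=M(\tilde\phi)/M(\phi)$, and this is consistent with $H(\psi^1)=H(\phi)$ precisely because $H(\tilde\phi)M(\tilde\phi)=H(\phi)M(\phi)$ by \eqref{inde-quant-proof}. I expect the main obstacle to be the careful treatment of the nonlocal potential energy $N$ in the decomposition: one must justify \eqref{N-expan}, namely the vanishing of the dipolar interaction between well-separated or dispersing bumps, and establish the strict superadditivity that excludes dichotomy. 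Once these are in place, the sandwich argument and the identification of the limit are routine.
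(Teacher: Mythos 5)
Your proposal is correct in substance but takes a genuinely different route from the paper. The paper never invokes a profile decomposition for this lemma: it applies the concentration-compactness trichotomy of Lions \cite{Lions} directly to the static sequence, excludes vanishing exactly as you do (from $-N(f_n)=H(f_n)-2E(f_n)\rightarrow -N(\phi)>0$; note this is just the definition of $E$, not a Pohozaev identity), and excludes dichotomy by splitting $f_n\approx f^1_n+f^2_n$ with $M(f^i_n)$ strictly below $M(\phi)$, applying \eqref{GN-ineq} to each piece with the strict mass deficit and the elementary inequality $x^{3/2}+y^{3/2}\leq (x+y)^{3/2}$; the dipolar cross-term $\iint K(x-y)|f^1_n(x)|^2|f^2_n(y)|^2\,dxdy$ is killed by hand using the divergence of the supports and the decay of $K$ at infinity. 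Your superadditivity of $(h,m)\mapsto h^{3/4}m^{1/4}$ is an elegant repackaging of the same strict-inequality mechanism (it treats $H$ and $M$ jointly, where the paper exploits only the mass deficit), and your route buys economy in this paper's context, since the profile-decomposition machinery is already set up for Proposition \ref{prop-scat-equi}, at the price of a heavier tool for a purely static problem. One step needs repair as written: you dispose of the remainder's potential energy by citing the smallness of $\|L(t)W^J_n\|$, but \eqref{remainder} is smallness of a space-time Strichartz norm and does not by itself control $\|W^J_n\|_{L^4}$ at the fixed time $t=0$ --- indeed the paper's own expansion \eqref{N-expan} deliberately retains the term $-N(W^J_n)$. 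Since your sequence carries no time evolution, the clean fix is to use the profile decomposition without time translations (\`a la Hmidi--Keraani), whose statement includes $\lim_{J\to\infty}\limsup_{n\to\infty}\|W^J_n\|_{L^4}=0$ and which removes the dispersive profiles from the discussion altogether; alternatively, the $L^4$-smallness can be recovered from \eqref{remainder} by an inverse-embedding argument combined with weak lower semicontinuity and Fatou. Finally, your identification endgame agrees with the paper's (Euler--Lagrange equation, rescaling, and \eqref{inde-quant-proof} pinning $\mu=M(\tilde\phi)/M(\phi)$), except that the paper is more careful at one point: it first extracts a constant phase through the equality $\|\nabla(|f|)\|_{L^2}=\|\nabla f\|_{L^2}$ so as to apply the elliptic identification of \cite{AS} to a nonnegative optimizer, whereas your ``phase/amplitude normalization'' leaves this implicit.
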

	
	\begin{proof}
		The proof is based on the concentration-compactness lemma of Lions \cite{Lions}. Since $\{f_n\}_{n\geq 1}$ is a bounded sequence in $H^1$ satisfying $M(f_n) = M(\phi)$ for all $n\geq 1$, it follows from the concentration-compactness lemma of Lions that there exists a subsequence still denoted by $\{f_n\}_{n\geq 1}$ satisfying one of the three possibilities: vanishing, dichotomy and compactness. 
		
		If the vanishing occurs, then $f_n \rightarrow 0$ strongly in $L^r$ for any $2<r<6$ which is not possible due to the fact that
		\begin{align} \label{conver-N}
		-N(f_n) = 2E(f_n)- H(f_n) \rightarrow 2E(\phi) -H(\phi)=-N(\phi) >0
		\end{align}
		as $n\rightarrow \infty,$ where we used the continuity property of the convolution operator with the dipolar kernel $K(x).$ 	\\	
		If the dichotomy occurs, then there exist $\mu \in (0, M(\phi))$ and sequences $\{f^1_n\}_{n\geq 1}, \{f^2_n\}_{n\geq 1}$ bounded in $H^1$ such that
		\begin{equation}\label{comp-cond}
		\begin{aligned}
		\left\{
		\renewcommand*{\arraystretch}{1.2}
		\begin{array}{l}
		\|f_n-f^1_n-f^2_n\|_{L^r} \rightarrow 0 \text{ as } n\rightarrow \infty \text{ for any } 2\leq r <6; \\
		M(f^1_n) \rightarrow \mu, \quad M(f^2_n)\rightarrow M(\phi)-\mu \text{ as } n\rightarrow \infty; \\
		\text{dist}(\supp\{f^1_n\}, \supp\{f^2_n\}) \rightarrow \infty \text{ as } n\rightarrow \infty; \\
		\liminf_{n\rightarrow \infty} H(f_n) - H(f^1_n)-H(f^2_n) \geq 0.
		\end{array}
		\right.
		\end{aligned}
		\end{equation}
		By the Gagliardo-Nirenberg's inequality \eqref{GN-ineq} we have
		\[
		-N(f^1_n) \leq C_{\opt} [H(f^1_n)]^{\frac{3}{2}} [M(f^1_n)]^{\frac{1}{2}} < C_{\opt} [H(f^1_n)]^{\frac{3}{2}} [M(\phi)]^{\frac{1}{2}},
		\]
		 where we let $n$ sufficiently large  and use the second property in \eqref{comp-cond} in order to get the last strict inequality. Similarly we have 
		\[
		-N(f^2_n) < C_{\opt} [H(f^2_n)]^{\frac{3}{2}} [M(\phi)]^{\frac{1}{2}},
		\]
		for $n$ sufficiently large. It follows that
		\begin{align}
		-N(\phi) = \lim_{n\rightarrow \infty} -N(f_n) & = \lim_{n\rightarrow \infty} - N(f^1_n) - N(f^2_n) \label{conver-N-fn}\\
		&< C_{\opt} \lim_{n\rightarrow \infty} \left( [H(f^1_n)]^{\frac{3}{2}} + [H(f^2_n)]^{\frac{3}{2}}\right) [M(\phi)]^{\frac{1}{2}} \nonumber \\
		&\leq C_{\opt} \lim_{n\rightarrow \infty} \left(H(f^1_n) + H(f^2_n)\right)^{\frac{3}{2}} [M(\phi)]^{\frac{1}{2}}  \label{conver-N-fn2} \\
		&\leq C_{\opt} \lim_{n\rightarrow \infty} [H(f_n)]^{\frac{3}{2}} [M(\phi)]^{\frac{1}{2}} \label{conver-N-fn3} \\
		&= C_{\opt} [H(\phi)]^{\frac{3}{2}}[M(\phi)]^{\frac{1}{2}} \nonumber
		\end{align}
		which is a contradiction. To get \eqref{conver-N-fn2} we just used the property $x^\beta+y^\beta\leq(x+y)^\beta$ which holds for any $\beta\geq1$ and any $x,y \in\R$ nonnegative. \eqref{conver-N-fn3} follows by using the last property in \eqref{comp-cond} up to passing to subsequences if necessary.
		\noindent To see \eqref{conver-N-fn}, we proceed as follows. First, we have 
		\begin{align*}
		\left|\|f_n\|^4_{L^4} - \|f_n^1 + f^2_n\|^4_{L^4}\right| &\lesssim \|f_n-f^1_n-f^2_n\|_{L^4} \left( \|f_n\|^3_{L^4} + \|f_n^1+f_n^2\|^3_{L^4}\right) \\
		&\lesssim \|f_n-f^1_n-f^2_n\|_{L^4} \left( \|f_n\|^3_{H^1} + \|f_n^1\|^3_{H^1} + \|f_n^2\|^3_{H^1}\right) \rightarrow 0
		\end{align*}
		as $n\rightarrow \infty$. Since $f^1_n$ and $f^2_n$ have disjoint supports for $n$ large, we infer that
		\begin{align} \label{est-L4}
		\lim_{n\rightarrow \infty} \|f_n\|^4_{L^4} -\|f^1_n\|^4_{L^4} -\|f^2_n\|^4_{L^4} =0.
		\end{align}
		Similarly, using the fact that $\mathcal{K}: f \mapsto K\ast f$ is continuous from $L^2$ into itself,
		\begin{align*}
		\Big| &\int (K\ast |f_n|^2) |f_n|^2 dx - \int (K \ast |f_n^1+f^2_n|^2) |f_n^1+f^2_n|^2 dx\Big| \\
		& =  \int (K\ast |f_n|^2) (|f_n|^2-|f_n^1+f^2_n|^2) dx  + \int (K \ast (|f_n|^2-|f_n^1+f_n^2|^2)) |f_n^1+f_n^2|^2 dx \\
		&\lesssim \|f_n-f_n^1-f^2_n\|_{L^4} \\
		&\mathrel{\phantom{\lesssim}} \times \Big(\|f_n\|^2_{L^4} (\|f_n\|_{L^4}+\|f_n^1+f_n^2\|_{L^4}) + (\|f_n\|_{L^4} + \|f_n^1+f_n^2\|_{L^4}) \|f_n^1+f_n^2\|_{L^4}^2 \Big) \rightarrow 0
		\end{align*} 
		as $n\rightarrow 0$. As $f^1_n$ and $f^2_n$ have disjoint supports for $n$ large, we see that
		\begin{align*}
		\int (K\ast|f_n^1+f^2_n|^2)|f_n^1+f_n^2|^2 dx = \int (K\ast |f_n^1|^2)|f_n^1|^2 dx + \int (K \ast |f_n^2|^2) |f_n^2|^2 dx + o_n(1), 
		\end{align*}
		where $o_n(1) \rightarrow 0$ as $n\rightarrow \infty$. Here we used the fact that
		\begin{equation}\label{cross-term}
		\int (K\ast |f_n^1|^2) |f_n^2|^2 dx = \iint K(x-y) |f_n^1(x)|^2 |f_n^2(y)|^2 dxdy \rightarrow 0
		\end{equation}
		as $n\rightarrow \infty,$ which comes from the boundedness of $\{f^1_n\}_{n\geq 1}$ and $\{f^2_n\}_{n\geq 1}$ in $L^2$ and $K(x-y) \rightarrow 0$ as $|x-y|\rightarrow \infty$. Indeed we have that 
		\begin{equation}\label{cross-to-zero}
		\iint K(x-y) |f_n^1(x)|^2 |f_n^2(y)|^2 dxdy\lesssim \sup_{(x,y) \in \Omega_n} |K(x-y)|\iint |f_n^1(x)|^2 |f_n^2(y)|^2dxdy
		\end{equation}
		where the domain $\Omega_n$ is defined by 
\[
\Omega_n =\{(x,y) \in \R^3\times \R^3 \ : \ x\in \supp\{f_n^1\}, y \in \supp\{f_n^2\}\}.
\]
Hence the convergence \eqref{cross-term} is a consequence of the following:
\[
\sup_{(x,y) \in \Omega_n} |K(x-y)| \|f_n^1\|^2_{L^2} \|f_n^2\|^2_{L^2}
\rightarrow 0
\]
which is implied  by the third property in \eqref{comp-cond} and the decay of $K(x-y)\to0$ as $|x-y|\to\infty.$ 

		Thus we get
		\begin{align} \label{est-N}
		\lim_{n\rightarrow \infty} N(f_n) - N(f^1_n) - N(f^2_n) =0. 
		\end{align}
		By summing up \eqref{est-L4} and \eqref{est-N}, we prove \eqref{conver-N-fn}.	
		
		Therefore we can infer that  compactness must occur. Then there exists a sequence $\{y_n\}_{n\geq 1} \subset \R^3$ such that up to a subsequence, $u_n(\cdot+y_n) \rightarrow f$ strongly in $L^r$ for any $2\leq r<6$ and weakly in $H^1$ with some $f\in H^1$. It follows that
		\begin{align*}
		M(f) = \lim_{n\rightarrow \infty} M(f_n(\cdot+y_n)) = M(\phi), \quad 
		-N(f) = \lim_{n\rightarrow \infty} -N(f_n(\cdot+y_n)) = -N(\phi)
		\end{align*}
		and
		\[
		H(f) \leq \liminf_{n \rightarrow \infty} H(f_n(\cdot+y_n)) = H(\phi).
		\]
		On the other hand, by \eqref{GN-ineq}, we have
		\[
		[H(f)]^{\frac{3}{2}} \geq \frac{-N(f)}{C_{\opt} [M(f)]^{\frac{1}{2}}} = \frac{-N(\phi)}{C_{\opt} [M(\phi)]^{\frac{1}{2}}} = [H(\phi)]^{\frac{3}{2}}
		\]
		hence $H(f) \geq H(\phi)$, so $H(f) = \lim_{n\rightarrow \infty} H(f_n(\cdot+y_n)) = H(\phi)$. Thus, $f_n(\cdot+y_n) \rightarrow f$ strongly in $H^1$ and $f$ is an optimizer for \eqref{GN-ineq}. We claim that there exists $\theta \in \R$ such that $f(x) =e^{i\theta} g(x)$, where $g$ is a non-negative optimizer for \eqref{GN-ineq}. Indeed, since $\|\nabla (|f|)\|_{L^2} \leq \|\nabla f\|_{L^2}$ (see \cite{LL}), it is clear that $W(|f|) \leq W(f)$, where $W$ is as in \eqref{weins-func}. This implies that $|f|$ is also an optimizer for \eqref{GN-ineq} and
		\begin{align} \label{prop-opti-f}
		\|\nabla(|f|)\|_{L^2} = \|\nabla f\|_{L^2}.
		\end{align}
		Set $w(x):=\frac{f(x)}{|f(x)|}$. Since $|w(x)|^2=1$, it follows that $\rea(\overline{w} \nabla w(x)) =0$ and
		\[
		\nabla f(x) = \nabla(|f(x)|) w(x) + |f(x)| \nabla w(x) = w(x) (\nabla(|f(x)|) + |f(x)| \overline{w}(x) \nabla w(x))
		\]
		which implies $|\nabla f(x)|^2 = |\nabla (|f(x)|)|^2+ |f(x)|^2 |\nabla w(x)|^2$ for all $x\in \R^3$. From \eqref{prop-opti-f}, we get
		\[
		\int_{\R^3} |f(x)|^2 |\nabla w(x)|^2 dx =0
		\]
		which shows $|\nabla w(x)|=0$, hence $w(x)$ is a constant, and the claim follows with $g(x) = |f(x)|$. By \cite[Lemma 3.1]{AS}, $g$ is a weak solution to 
		\[
		- a \Delta g + \frac{4}{C_{\opt}} \left(\lambda_1 |g|^2 g+ \lambda_2 (K\ast|g|^2) g\right) + b g =0,
		\]
		where
		\[
		a= 3 \|\nabla g\|_{L^2} \|g\|_{L^2}, \quad b = \|\nabla g\|^3_{L^2} \|g\|_{L^2}^{-1}.
		\]
		By a change of variable $g(x) = \nu \tilde{\phi}(\mu x)$ with $\nu=\sqrt{\frac{b C_{\opt}}{4}}, \mu = \sqrt{\frac{b}{2a}}$, we see that $\tilde{\phi}$ solves \eqref{ell-equ} and $W(g) = W(\tilde{\phi}) =C_{\opt}$. In particular, $\tilde{\phi}$ is a ground state related to \eqref{ell-equ}. Moreover, using
		\[
		H(\phi) M(\phi) = H(f) M(f) = H(g) M(g) = \nu^4 \mu^{-4} H(\tilde{\phi}) M(\tilde{\phi})
		\]
		and \eqref{inde-quant}, we infer that $\nu=\mu$. Since $M(f)=M(\phi)$, we infer that $\mu = \frac{M(\tilde{\phi})}{M(\phi)}$. The proof is complete.		
	\end{proof}
	
	We are now able to show long time dynamics at the mass-energy threshold given in Theorem \ref{theo-dyna-at}. \\
	
	\noindent {\it Proof of Theorem \ref{theo-dyna-at}.} $\bullet$ Let us show the first point. Let $u_0 \in H^1$ satisfy \eqref{cond-ener-at} and \eqref{cond-scat-at}. Since \eqref{cond-ener-at} and \eqref{cond-scat-at} are invariant under the scaling
	\[
	u_0^\rho (x):= \rho u_0( \rho x), \quad \rho>0
	\]
	and taking $\rho= \frac{M(u_0)}{M(\phi)}$, we can assume that 
	\begin{align} \label{cond-EM-at}
	M(u_0) = M(\phi), \quad E(u_0) = E(\phi).
	\end{align}
	The condition \eqref{cond-scat-at} becomes $H(u_0) < H(\phi)$. We claim that
	\begin{align} \label{claim-at-1}
	H(u(t)) < H(\phi)
	\end{align}  
	for all $t\in [0,T^*)$. Assume it is not true, then there exists $t_0 \in [0,T^*)$ such that $H(u(t_0)) = H(\phi)$. By the definition of the energy, we get that
	\[
	- N(u(t_0)) =H(u(t_0))- 2E(u(t_0)) = H(\phi) - 2 E(\phi) = \frac{2}{3} H(\phi) = - N(\phi).
	\]
	This shows that $u(t_0)$ is an optimizer of \eqref{GN-ineq}. Arguing as in the proof of Lemma \ref{lem-compact}, there exists a ground state $\tilde\phi$ related to \eqref{ell-equ} such that
	\[
	u(t_0,x) =  e^{i\theta} \mu \tilde{\phi}(\mu x)
	\]
	for some $\theta \in \R$ and  $\mu >0$. We can rewrite it as $u(t_0,x) = e^{i\mu^2t_0} e^{i(\theta-\mu^2 t_0)} \mu \tilde{\phi}(\mu x)$, hence by uniqueness of the solution to \eqref{dip-NLS}, we have $u(t,x)=e^{i\mu^2 t}e^{i\tilde\theta} \mu \tilde{\phi}(\mu x)$, where $\tilde\theta:=\theta-\mu^2 t_0$. It follows that
	\[
	H(u_0) M(u_0) = H(\tilde{\phi}) M(\tilde{\phi}) = H(\phi) M(\phi)
	\] 
	which contradicts \eqref{cond-scat-at}, and we prove \eqref{claim-at-1}.Combining \eqref{cond-EM-at} and \eqref{claim-at-1}, we prove \eqref{est-solu-at-1}. Moreover, by the blow-up alternative, we have $T^*=\infty.$\\
	
\noindent 	Now we assume in addition that $\lambda_1$ and $\lambda_2$ satisfy \eqref{cond-GW}. We consider two cases:
	
	\noindent \textit{Case 1.} If $\sup_{t\in [0,\infty)} H(u(t)) < H(\phi)$, then there exists $\eta>0$ such that for all $t\in [0,\infty)$,
	\[
	H(u(t)) \leq (1-\eta) H(\phi).
	\]
	This together with \eqref{cond-EM-at} imply
	\begin{align*}
	-N(u(t)) M(u(t)) &\leq C_{\opt} \left(H(u(t)) M(u(t))\right)^{\frac{3}{2}} \\
	&= \frac{2}{3} \frac{\left(H(u(t)) M(u(t))\right)^{\frac{3}{2}}}{\left(H(\phi) M(\phi)\right)^{\frac{1}{2}}} \\
	&\leq \frac{2}{3} (1-\eta)^{\frac{3}{2}} H(\phi) M(\phi) \\
	&=-(1-\eta)^{\frac{3}{2}} N(\phi) M(\phi) 
	\end{align*}
	for all $t\in [0,T^*)$ which shows \eqref{scat-crite}. By Theorem \ref{theo-scat-crite}, the solution scatters in $H^1$ forward in time.
	
	\noindent {\textit{ Case 2.} If $\sup_{t\in[0,\infty)} H(u(t)) = H(\phi)$, then there exists a time sequence $(t_n)_{n \geq 1} \subset [0,\infty)$ such that
	\[
	M(u(t_n)) = M(\phi), \quad E(u(t_n))= E(\phi), \quad \lim_{n\rightarrow \infty} H(u(t_n)) = H(\phi).
	\]
	Note that $t_n$ must tend to infinity. Indeed, if it does not hold, then up to a subsequence $t_n \rightarrow t_0$ as $n\rightarrow \infty$. Since $u(t_n) \rightarrow u(t_0)$ strongly in $H^1$, we see that $u(t_0)$ is an optimizer for \eqref{GN-ineq}. Arguing as above, we have a contradiction. We now apply Lemma \ref{lem-compact} with $f_n= u(t_n)$ to get: (up to a subsequence) there exist a ground state $\tilde{\phi}$ related to \eqref{ell-equ} and a sequence $\{y_n\}_{n\geq 1} \subset \R^3$ such that
	\[
	u(t_n, \cdot+y_n) \rightarrow e^{i\theta} \mu \tilde{\phi}(\mu \cdot) \quad \text{ strongly in } H^1
	\]
	for some $\theta \in \R$ and $\mu>0$ as $n\rightarrow \infty$. This finishes the first part of Theorem \ref{theo-dyna-at}.
	
	$\bullet$ We continue with the proof of the second point. Let $u_0 \in H^1$ satisfy \eqref{cond-ener-at} and \eqref{cond-at}. By scaling, we can assume
	\[
	M(u_0) = M(\phi), \quad E(u_0) = E(\phi), \quad H(u_0) = H(\phi).
	\]
	This shows that $u_0$ is an optimizer for \eqref{GN-ineq}. This shows that $u_0(x) = e^{i\theta} \mu \tilde{\phi}(\mu x)$ for some $\theta \in \R,$  $\mu>0$ and $\tilde \phi$ a ground state related to \eqref{ell-equ}. By the uniqueness of solutions to \eqref{dip-NLS}, we conclude that $u(t,x) = e^{i\mu^2t} e^{i\tilde{\theta}} \mu \tilde{\phi}(\mu x)$ for some $\tilde{\theta}\in \R$.
	
	$\bullet$ Finally, we consider the third point. Let $u_0 \in H^1$ satisfy \eqref{cond-ener-at} and \eqref{cond-blow-at}. By scaling argument, we can assume that \eqref{cond-EM-at} holds. Then, \eqref{cond-blow-at} becomes $H(u_0)> H(\phi)$. By the same argument as in the proof of \eqref{claim-at-1}, we prove that
	\begin{align} \label{claim-at-2}
	H(u(t))> H(\phi)
	\end{align}
	for all $t\in [0,T^*)$. If $T^*<\infty$, then we are done. Otherwise, if $T^*=\infty$, we consider two cases.
	
	\noindent \textit{Case 1.} If $\sup_{t\in [0,\infty)} H(u(t)) > H(\phi)$, then there exists $\eta>0$ such that for all $t\in [0,\infty)$,
	\[
	H(u(t)) \geq (1+\eta) H(\phi).
	\]
	From a simple computation by \eqref{def:G} and the definition of the energy, it follows that
	\begin{align*}
	G(u(t)) M(u(t)) &= 3 E(u(t)) M(u(t)) - \frac{1}{2} H(u(t)) M(u(t)) \\
	&= 3 E(u_0) M(u_0) - \frac{1}{2} H(u(t)) M(u_0) \\
	&\leq 3 E(\phi) M(\phi) - \frac{1}{2} (1+\eta) H(\phi) M(\phi) \\
	&= -\frac{\eta}{2}H(\phi) M(\phi)
	\end{align*}
	for all $t\in [0,\infty),$ where in the last equality we used \eqref{inde-quant-proof}. By Theorem \ref{theo-blow-crite}, there exists a time sequence $t_n \rightarrow \infty$ such that $\|u(t_n)\|_{H^1} \rightarrow \infty$ as $n\rightarrow \infty$.
	
	\noindent \textit{Case 2.} If $\sup_{t\in [0,\infty)} H(u(t)) = H(\phi)$, we can argue as above to have: there exist a time sequence $t_n \rightarrow \infty$, a ground state $\tilde{\phi}$ related to \eqref{ell-equ} and a sequence $\{y_n\}_{n\geq 1} \subset \R^3$ such that 
	\[
	u(t_n, \cdot +y_n) \rightarrow e^{i\theta} \mu \tilde{\phi}(\mu \cdot) \quad \text{ strongly in } H^1
	\]
	for some $\theta \in \R$ and $\mu>0$ as $n\rightarrow \infty$. The proof of Theorem \ref{theo-dyna-at} is now complete.

\section*{Acknowledgements}
The authors are grateful to the anonymous referees for their comments and remarks, which improved the presentation of the paper. 
	V. D. D. was supported in part by the Labex CEMPI (ANR-11-LABX-0007-01). He would like to express his deep gratitude to his wife - Uyen Cong for her encouragement and support.  
	
\begin{bibdiv}
\begin{biblist}

\bib{AEM}{article}{
   author={Anderson, M. H.},
   author={Ensher, J. R.},
   author={Matthews, M. R.},
   author={Wieman, C. E.},
   author={Cornell, E. A.},
   title={Observation of Bose-Einstein condensation in a dilute atomic vapor},
   journal={Science},
   volume={269},
   date={1995},
   number={5221},
   pages={198--201},
}

\bib{AS}{article}{
   author={Antonelli, P.},
   author={Sparber, C.},
   title={Existence of solitary waves in dipolar quantum gases},
   journal={Phys. D},
   volume={240},
   date={2011},
   number={4-5},
   pages={426--431},
   issn={0167-2789},
}

\bib{BaCa}{article}{
   author={Bao, W.},
   author={Cai, Y.},
   title={Mathematical theory and numerical methods for Bose-Einstein condensation},
   journal={Kinetic and Related Models AMS},
   volume={6},
   date={2013},
   number={1},
   pages={1--135},
}

\bib{BaCaWa}{article}{
   author={Bao, W.},
   author={Cai, Y.},
   author={Wang, H.},
   title={Efficient numerical method for computing ground states and dynamic of dipolar Bose-Einstein condensates},
   journal={J. Comput. Phys.},
   volume={229},
   date={2010},
   pages={7874--7892},
}
\bib{BJM}{article}{
   author={Bao, W.},
   author={Jaksch, D.},
   author={Markowich, P. A.},
   title={Numerical solution of the Gross-Pitaevskii equation for
   Bose-Einstein condensation},
   journal={J. Comput. Phys.},
   volume={187},
   date={2003},
   number={1},
   pages={318--342},
   issn={0021-9991},
}
\bib{BF}{article}{
   author={Bellazzini, J.},
   author={Forcella, L.},
   title={Asymptotic dynamic for dipolar quantum gases below the ground
   state energy threshold},
   journal={J. Funct. Anal.},
   volume={277},
   date={2019},
   number={6},
   pages={1958--1998},
   issn={0022-1236},
}
\bib{BF-blow}{article}{
   author={J. Bellazzini},
   author={Forcella, L.},
   title={Dynamical collapse of cylindrical symmetric dipolar Bose-Einstein condensates},
   journal={preprint \href{http://arxiv.org/abs/2005.02894v1}{arXiv:2005.02894}, 2020},
}

\bib{BJ}{article}{
   author={Bellazzini, J.},
   author={Jeanjean, L.},
   title={On dipolar quantum gases in the unstable Regime},
   journal={SIAM J. Math. Anal.},
   volume={48},
   date={2016},
   number={3},
   pages={2028--2058},
   issn={0036-1410},
}	

\bib{BSTH}{article}{
   author={Bradley, C.C.},
   author={Sackett, C. A.},
   author={Tolett, J.J.},
   author={Hulet, R.J.},
   title={Evidence of Bose-Einstein condensation in an atomic gas with attractive interaction},
   journal={Phys. Rev. Lett. },
   volume={75},
   date={1995},
   pages={1687--1690},
}	
\bib{CH}{article}{
   author={Carles, R.},
   author={Hajaiej, H.},
   title={Complementary study of the standing wave solutions of the
   Gross-Pitaevskii equation in dipolar quantum gases},
   journal={Bull. Lond. Math. Soc.},
   volume={47},
   date={2015},
   number={3},
   pages={509--518},
   issn={0024-6093},
}

\bib{CMS}{article}{
   author={Carles, R.},
   author={Markowich, P. A.},
   author={Sparber, C.},
   title={On the Gross-Pitaevskii equation for trapped dipolar quantum
   gases},
   journal={Nonlinearity},
   volume={21},
   date={2008},
   number={11},
   pages={2569--2590},
   issn={0951-7715},
}
\bib{Ca}{book}{
   author={Cazenave, T.},
   title={Semilinear Schr\"{o}dinger equations},
   series={Courant Lecture Notes in Mathematics},
   volume={10},
   publisher={New York University, Courant Institute of Mathematical
   Sciences, New York; American Mathematical Society, Providence, RI},
   date={2003},
   pages={xiv+323},
   isbn={0-8218-3399-5},
}
\bib{CW}{article}{
   author={Cazenave, T.},
   author={Weissler, F. B.},
   title={Rapidly decaying solutions of the nonlinear Schr\"{o}dinger equation},
   journal={Comm. Math. Phys.},
   volume={147},
   date={1992},
   number={1},
   pages={75--100},
   issn={0010-3616},
}
		
\bib{DMA}{article}{
   author={Davis, K. B.},
   author={Mewes, M. O.},
   author={Andrews, M. R.},
   author={Van Druten, N. J.},
   author={Durfee, D. S.},
   author={Kurn, D. M.},
   author={Ketterle, W.},
   title={Bose-Einstein condensation in a gas of sodium atoms},
   journal={Physical Review Letters},
   volume={75},
   date={1995},
   number={22},
   pages={3639}
}

\bib{Dinh-DCDS}{article}{
   author={Dinh, V. D.},
   title={A unified approach for energy scattering for focusing nonlinear Schr\"odinger equations},
   journal={Discrete Contin. Dyn. Syst. - A},
   volume={40},
   date={2020},
   number={11},
   pages={6441--6471},
}

\bib{Dinh-2021}{article}{
author = {{Dinh}, V. D.}, 
title ={On the instability of standing waves for 3D dipolar BoseEinstein condensates},
journal = {Physica D: Nonlinear Phenomena},
volume = {419}, 
pages = {132856},
}

\bib{DWZ}{article}{
   author={Du, D.},
   author={Wu, Y.},
   author={Zhang, K.},
   title={On blow-up criterion for the nonlinear Schr\"{o}dinger equation},
   journal={Discrete Contin. Dyn. Syst.},
   volume={36},
   date={2016},
   number={7},
   pages={3639--3650},
   issn={1078-0947},
}
		
\bib{DHR}{article}{
   author={Duyckaerts, T.},
   author={Holmer, J.},
   author={Roudenko, S.},
   title={Scattering for the non-radial 3D cubic nonlinear Schr\"odinger
   equation},
   journal={Math. Res. Lett.},
   volume={15},
   date={2008},
   number={6},
   pages={1233--1250},
   issn={1073-2780},
}
\bib{DR-threshold}{article}{
   author={Duyckaerts, T.},
   author={Roudenko, S.},
   title={Threshold solutions for the focusing 3D cubic Schr\"{o}dinger
   equation},
   journal={Rev. Mat. Iberoam.},
   volume={26},
   date={2010},
   number={1},
   pages={1--56},
   issn={0213-2230},
}
\bib{DR-beyond}{article}{
   author={T. Duyckaerts},
   author={Roudenko, S.},
   title={Going beyond the threshold: scattering and blow-up in the focusing
   NLS equation},
   journal={Comm. Math. Phys.},
   volume={334},
   date={2015},
   number={3},
   pages={1573--1615},
   issn={0010-3616},
}	
\bib{FXC}{article}{
   author={Fang, D. Y.},
   author={Xie, J.},
   author={Cazenave, T.},
   title={Scattering for the focusing energy-subcritical nonlinear
   Schr\"{o}dinger equation},
   journal={Sci. China Math.},
   volume={54},
   date={2011},
   number={10},
   pages={2037--2062},
   issn={1674-7283},
}	
\bib{GW}{article}{
   author={Gao, Y.},
   author={Wang, Z.},
   title={Blow-up for trapped dipolar quantum gases with large energy},
   journal={J. Math. Phys.},
   volume={60},
   date={2019},
   number={12},
   pages={121501, 10},
   issn={0022-2488},
}
\bib{GW2}{article}{
   author={{Gao}, Y.},
   author={Wang, Z.},
   title={Below and beyond the mass-energy threshold: scattering for the Hartree equation with radial data in $d\geq5$},
   journal={Z. Angew. Math. Phys.},
   volume={71},
   date={2020},
   number={52},
}

\bib{Glassey}{article}{
   author={Glassey, R. T.},
   title={On the blowing up of solutions to the Cauchy problem for nonlinear
   Schr\"odinger equations},
   journal={J. Math. Phys.},
   volume={18},
   date={1977},
   number={9},
   pages={1794--1797},
   issn={0022-2488},
}
\bib{Guevara}{article}{
   author={Guevara, C. D.},
   title={Global behavior of finite energy solutions to the $d$-dimensional
   focusing nonlinear Schr\"{o}dinger equation},
   journal={Appl. Math. Res. Express. AMRX},
   date={2014},
   number={2},
   pages={177--243},
   issn={1687-1200},
}
				
\bib{HR}{article}{
   author={Holmer, J.},
   author={Roudenko, S.},
   title={A sharp condition for scattering of the radial 3D cubic nonlinear
   Schr\"odinger equation},
   journal={Comm. Math. Phys.},
   volume={282},
   date={2008},
   number={2},
   pages={435--467},
   issn={0010-3616},
}
\bib{HMS}{article}{
   author={Huang, Z.},
   author={Markowich, P. A.},
   author={Sparber, C.},
   title={Numerical simulation of trapped dipolar quantum gases: collapse
   studies and vortex dynamics},
   journal={Kinet. Relat. Models},
   volume={3},
   date={2010},
   number={1},
   pages={181--194},
   issn={1937-5093},
}

\bib{KM}{article}{
   author={Kenig, C. E.},
   author={Merle, F.},
   title={Global well-posedness, scattering and blow-up for the
   energy-critical, focusing, nonlinear Schr\"odinger equation in the radial
   case},
   journal={Invent. Math.},
   volume={166},
   date={2006},
   number={3},
   pages={645--675},
   issn={0020-9910},
}

\bib{LMSLP}{article}{
   author={Lahaye, T.},
   author={Menotti, C.},
   author={Santos, L.},
   author={Lewenstein, M.},
   author={Pfau, T.},
   title={The physics of dipolar bosonic quantum gases},
   journal={Reports on Progress in Physics},
   volume={72}
   date={2009},
   number={12}
   pages={126401},
}

\bib{LL}{book}{
   author={Lieb, E. H.},
   author={Loss, M.},
   title={Analysis},
   series={Graduate Studies in Mathematics},
   volume={14},
   edition={2},
   publisher={American Mathematical Society, Providence, RI},
   date={2001},
   pages={xxii+346},
   isbn={0-8218-2783-9},
}
\bib{LP}{book}{
   author={Linares, F.},
   author={Ponce, G.},
   title={Introduction to nonlinear dispersive equations},
   series={Universitext},
   edition={2},
   publisher={Springer, New York},
   date={2015},
   pages={xiv+301},
   isbn={978-1-4939-2180-5},
   isbn={978-1-4939-2181-2},
}
\bib{Lions}{article}{
   author={Lions, P.-L.},
   title={The concentration-compactness principle in the calculus of
   variations. The locally compact case. I},
   language={English, with French summary},
   journal={Ann. Inst. H. Poincar\'{e} Anal. Non Lin\'{e}aire},
   volume={1},
   date={1984},
   number={2},
   pages={109--145},
   issn={0294-1449},
}

\bib{PS}{book}{
   author={Pitaevskii, L.},
   author={Stringari, S.},
   title={Bose-Einstein condensation},
   series={International Series of Monographs on Physics},
   volume={116},
   publisher={The Clarendon Press, Oxford University Press, Oxford},
   date={2003},
   pages={x+382},
   isbn={0-19-850719-4},
}

\bib{Tao}{book}{
   author={Tao, T.},
   title={Nonlinear dispersive equations},
   series={CBMS Regional Conference Series in Mathematics},
   volume={106},
   note={Local and global analysis},
   publisher={Published for the Conference Board of the Mathematical
   Sciences, Washington, DC; by the American Mathematical Society,
   Providence, RI},
   date={2006},
   pages={xvi+373},
   isbn={0-8218-4143-2},
}
\bib{Weinstein}{article}{
   author={Weinstein, M. I.},
   title={Nonlinear Schr\"{o}dinger equations and sharp interpolation estimates},
   journal={Comm. Math. Phys.},
   volume={87},
   date={1982/83},
   number={4},
   pages={567--576},
   issn={0010-3616},
}
\bib{YY1}{article}{
   author={Yi, S.},
   author={You, L.},
   title={Trapped atomic condensates with anisotropic interactions},
   journal={Phys. Rev. A},
   volume={61}
   date={2000},
   number={4}
   pages={041604},
}
\bib{YY2}{article}{
   author={{S. Yi}},
   author={You, L.},
   title={Trapped condensates of atoms with dipole interactions},
   journal={Phys. Rev. A},
   volume={63}
   date={2001},
   number={5}
   pages={053607},
}

\end{biblist}
\end{bibdiv}
	
\end{document}